\newtheorem{theorem}{Theorem}[section]
\newtheorem{lemma}[theorem]{Lemma}
\newtheorem{prop}[theorem]{Proposition}
\newtheorem{cor}[theorem]{Corollary}
\theoremstyle{remark}
\newtheorem{definition}[theorem]{Definition}
\newtheorem{remark}[theorem]{Remark}
\def\C{{\mathbb C}}
\def\TT{{\mathbb T}}
\def\B{{\mathcal{B}}}
\def\K{{\mathcal{K}}}
\def\T{{\mathcal{T}}}
\def\I{{\mathcal{I}}}
\def\J{{\mathcal{J}}}
\def\L{{\mathcal{L}}}
\def\M{{\mathcal{M}}}
\newcommand{\clsp}{\overline{\operatorname{span}}}
\newcommand{\lsp}{\operatorname{span}}
\newcommand{\id}{\operatorname{id}}
\newcommand{\piso}{\operatorname{piso}}
\newcommand{\iso}{\operatorname{iso}}
\newcommand{\End}{\operatorname{End}}
\newcommand{\Aut}{\operatorname{Aut}}
\newcommand{\whitesquare}{\hfill $\whitesquare$\newline\vspace{0.4cm}}
\def\newspan{\operatorname{span}}
\numberwithin{equation}{section}
\begin{document}

\title[Nica-Toeplitz algebra]
{The Nica-Toeplitz algebras of dynamical systems over abelian lattice-ordered groups as full corners}

\author[Saeid Zahmatkesh]{Saeid Zahmatkesh}
\address{Department of Mathematics, Faculty of Science, King Mongkut's University of Technology Thonburi, Bangkok 10140, THAILAND}
\email{saeid.zk09@gmail.com, saeid.kom@kmutt.ac.th}



\subjclass[2010]{Primary 46L55}
\keywords{Nica-Toeplitz algebra, lattice-ordered, semigroup, crossed product}

\begin{abstract}
Consider the pair $(G,P)$ consisting of an abelian lattice-ordered discrete group $G$ and its positive cone $P$. Let $\alpha$ be
an action of $P$ by extendible endomorphisms of a $C^*$-algebra $A$. We show that the Nica-Toeplitz
algebra $\mathcal{T}_{\textrm{cov}}(A\times_{\alpha} P)$ is a full corner of a group crossed product $\mathcal{B}\rtimes_{\beta}G$,
where $\mathcal{B}$ is a subalgebra of $\ell^{\infty}(G,A)$ generated by a collection of faithful copies of $A$, and
the action $\beta$ on $\mathcal{B}$ is given by the shift on $\ell^{\infty}(G,A)$. By using this realization, we show that
the ideal $\mathcal{I}$ of $\mathcal{T}_{\textrm{cov}}(A\times_{\alpha} P)$ for which the quotient algebra
$\mathcal{T}_{\textrm{cov}}(A\times_{\alpha} P)/\mathcal{I}$ is the isometric crossed product
$A\times_{\alpha}^{\textrm{iso}} P$ is also a full corner in an ideal $\mathcal{J}\rtimes_{\beta}G$ of $\mathcal{B}\rtimes_{\beta}G$.
\end{abstract}
\maketitle

\section{Introduction}
\label{intro}
Let $P$ be the positive cone of an abelian lattice-ordered discrete group $G$. The identity element of $G$ is denoted by $e$, and $s^{-1}$ denotes the inverse of an element $s\in G$. Note that we have $P^{-1}\cap P=\{e\}$ and $G=P^{-1}P$. For every $s,t\in G$, we let $s\vee t$ and $s\wedge t$ denote
the supremum and infimum of the elements $s$ and $t$, respectively. Suppose that $(A,P,\alpha)$ is a dynamical system consisting of a $C^*$-algebra $A$, and an action $\alpha:P\rightarrow \End (A)$ of $P$ by endomorphisms of $A$ such that
$\alpha_{e}=\id$. Note that since the $C^*$-algebra $A$ is not necessarily unital, we need to assume that each endomorphism $\alpha_{s}$ is extendible, which means that, it extends to a strictly continuous endomorphism $\overline{\alpha}_{s}$ of the multiplier algebra $\M(A)$.
Recall that an endomorphism $\alpha$ of $A$ is extendible if and only if there exists an approximate identity $\{a_{\lambda}\}$ in $A$ and a projection $p\in \M(A)$ such that $\alpha(a_{\lambda})$ converges strictly to $p$ in $\M(A)$. However, the extendibility of $\alpha$ does not necessarily imply $\overline{\alpha}(1_{\M(A)})=1_{\M(A)}$. In \cite{Fowler}, for the system $(A,P,\alpha)$, Fowler defined a covariant representation called the
\emph{Nica-Toeplitz covariant representation} of the system, such that the endomorphisms $\alpha_{s}$ are implemented by partial isometries.
He then showed that there exists a universal $C^*$-algebra $\T_{\textrm{cov}}(A\times_{\alpha} P)$ associated with the system $(A,P,\alpha)$
generated by a universal Nica-Toeplitz covariant representation of the system such that there is a bijection between the Nica-Toeplitz
covariant representations of the system and the nondegenerate representations of $\T_{\textrm{cov}}(A\times_{\alpha} P)$. This universal
algebra is called the \emph{Nica-Toeplitz algebra} or \emph{Nica-Toeplitz crossed product} of the system $(A,P,\alpha)$. We recall that
when the group $G$ is totally ordered and abelian, the algebra $\T_{\textrm{cov}}(A\times_{\alpha} P)$ is the partial-isometric crossed product
$A\times_{\alpha}^{\piso} P$ of the system $(A,P,\alpha)$ introduced and studied in \cite{LR}. Further studies on the structure of the algebra
$A\times_{\alpha}^{\piso} P$ were carried out in \cite{Adji-Abbas}, \cite{AZ}, \cite{AZ2}, \cite{LZ}, and \cite{SZ}. In particular, it
was shown in \cite{SZ} that $A\times_{\alpha}^{\piso} P$ is a full corner in a usual crossed product by a group. This is the main
inspiration of the present work, where by following the framework of \cite{SZ}, we generalize this corner realization of $\T_{\textrm{cov}}(A\times_{\alpha} P)$ to more general groups, namely, (abelian) lattice-ordered groups. However, compared to
the totally ordered case, the discussions here are more complicated which involve quite huge computations. We think that
such efforts are very useful on understanding the structure of $C^*$-algebras constructed out of semigroup dynamical systems, on which
we could import many information from the well-established theory of the usual crossed products by groups (for example, see \cite{LZ}).
This construction has been used recently in \cite{Hum} by Humeniuk in the $C^*$-envelope programme of Davidson-Fuller-Kakariadis in \cite{DFK}.

Following the idea of \cite{SZ}, a subalgebra $\B$ of the algebra $\ell^{\infty}(G,A)$ of norm bounded $A$-valued functions of $G$ will be defined. Then the shift on $\ell^{\infty}(G,A)$ gives an action $\beta$ of $G$ on $\B$ by automorphisms. Let $\B\rtimes_{\beta} G$ be the associated group crossed product of $\B$ by $G$. Next, a Nica-Toeplitz covariant representation of $(A,P,\alpha)$ in the multiplier algebra of $\B\rtimes_{\beta} G$ will be constructed, and we show that the corresponding homomorphism of the Nica-Toeplitz algebra is an isomorphism of $\T_{\textrm{cov}}(A\times_{\alpha} P)$ onto a full corner of $\B\rtimes_{\beta} G$. We then apply this realization to show that the
kernel of the natural surjective homomorphism $\Omega:(\T_{\textrm{cov}}(A\times_{\alpha} P),i_{A},i_{P})\rightarrow (A\times_{\alpha}^{\iso} P,\mu_{A},\mu_{P})$ induced by the canonical isometric covariant pair
$(\mu_{A},\mu_{P})$ of $(A,P,\alpha)$ is also a full corner in an ideal $\mathcal{J}\rtimes_{\beta}G$ of $\mathcal{B}\rtimes_{\beta}G$.
Moreover, we will see that $\ker \Omega$ is an essential ideal.

We begin with a preliminary section containing a summary on the Nica-Toeplitz algebra and the theory of the isometric crossed
products. In section \ref{sec:alg B} we introduce a subalgebra $\B$ of
$\ell^{\infty}(G,A)$, and an essential ideal $\J$ of $\B$. In section \ref{sec:full piso} we consider the usual crossed product
$\B\rtimes_{\beta} G$ by the group $G$, where the action $\beta$ is given by the shift on $\ell^{\infty}(G,A)$. Then
a Nica-Toeplitz covariant representation of $(A,P,\alpha)$ in $\M(\B\rtimes_{\beta} G)$ will be constructed, from which
we get an isomorphism $\Psi$ of the Nica-Toeplitz algebra $\T_{\textrm{cov}}(A\times_{\alpha} P)$ onto a full corner of $\B\rtimes_{\beta} G$.
Also, as $\J$ is a $\beta$-invariant essential ideal of $\B$, we identify the ideal $\ker \Omega$ with a full corner in the ideal $\mathcal{J}\rtimes_{\beta}G$ of $\mathcal{B}\rtimes_{\beta}G$ via the isomorphism $\Psi$.
Finally, in section \ref{sec:auto} we show that when the action of $P$ is given by automorphisms the Nica-Toeplitz algebra
$\T_{\textrm{cov}}(A\times_{\alpha} P)$ is a full corner in the usual crossed product $(B_{G}\otimes A)\rtimes G$ by group.

\section{Preliminaries}
\label{sec:pre}

\subsection{Morita equivalence and full corner}
\label{Morita}
The $C^*$-algebras $A$ and $B$ are called \emph{Morita equivalent} if there is an $A$--$B$-imprimitivity bimodule $X$. If $p$ is a projection in the multiplier algebra $\M(A)$ of $A$,
then the  $C^*$-subalgebra $pAp$ of $A$ is called a \emph{corner} in $A$. We
say a corner $pAp$ is \emph{full} if $\overline{ApA}:=\clsp\{apb: a,b\in A\}$ is $A$. Any full corner of $A$ is Morita equivalent to $A$ via the imprimitivity bimodule $Ap$
(see more in \cite{Lance} or \cite{RW}).

\subsection{Nica-Toeplitz algebra}
\label{NT-Alg}

A \emph{partial-isometric representation} of $P$ on a Hilbert space $H$ is a map $V:P\rightarrow B(H)$ such that each
$V_{x}:=V(x)$ is a partial isometry, and $V_{x}V_{y}=V_{xy}$ for all $x,y\in P$.

A \emph{Toeplitz covariant representation} of $(A,P,\alpha)$ on a Hilbert space $H$ is a pair $(\pi,V)$ consisting of
a nondegenerate representation $\pi:A\rightarrow B(H)$ and a partial-isometric representation $V:P\rightarrow B(H)$ of $P$ such that
\begin{align}
\label{cov1}
\pi(\alpha_{x}(a))=V_{x}\pi(a) V_{x}^{*}\ \ \textrm{and}\ \ V_{x}^{*}V_{x} \pi(a)=\pi(a) V_{x}^{*}V_{x}
\end{align}
for all $a\in A$ and $x\in P$. A \emph{Nica-Toeplitz covariant representation} of $(A,P,\alpha)$
on a Hilbert space $H$ is a Toeplitz covariant representation $(\pi,V)$ such that
\begin{align}
\label{cov5}
V_{x}^{*}V_{x}V_{y}^{*}V_{y}=V_{x\vee y}^{*}V_{x\vee y}
\end{align}
for all $x,y\in P$. The equation (\ref{cov5}) is called the \emph{Nica covariance}.

Note that every Nica-Toeplitz covariant pair $(\pi,V)$ extends to a Nica-Toeplitz covariant representation $(\overline{\pi},V)$ of
$(M(A),P,\overline{\alpha})$, and (\ref{cov1}) is equivalent to
\begin{align}
\label{cov2}
\pi(\alpha_{x}(a))V_{x}=V_{x}\pi(a)\ \ \textrm{and}\ \ V_{x}V_{x}^{*}=\overline{\pi}(\overline{\alpha}_{x}(1))
\end{align}
for $a\in A$ and $x\in P$.

\begin{definition}
\label{NT-CP-df}
A \emph{Nica-Toeplitz crossed product} of $(A,P,\alpha)$ is a triple $(B,i_{A},i_{P})$ consisting of a $C^*$-algebra $B$, a nondegenerate homomorphism $i_{A}:A\rightarrow B$, and a map
$i_{P}:P\rightarrow \M(B)$ such that:
\begin{itemize}
\item[(i)] if $\Lambda$ is a nondegenerate representation of $B$, then the pair $(\Lambda\circ i_{A},\overline{\Lambda}\circ i_{P})$ is a Nica-Toeplitz covariant representation of $(A,P,\alpha)$;
\item[(ii)] for every Nica-Toeplitz covariant representation $(\pi,V)$ of $(A,P,\alpha)$ on a Hilbert space $H$,
there exists a nondegenerate representation $\pi\times V: B\rightarrow B(H)$ such that $(\pi\times V) \circ i_{A}=\pi$ and $(\overline{\pi\times V}) \circ i_{P}=V$; and
\item[(iii)] $B$ is generated by $\{i_{A}(a)i_{P}(x): a\in A, x\in P\}$.
\end{itemize}
Fowler in \cite{Fowler} showed that the Nica-Toeplitz crossed product of $(A,P,\alpha)$ exists, and it is unique up to isomorphism (see in particular \cite[Proposition 9.2]{Fowler}). He denotes this algebra by
$\T_{\textrm{cov}}(A\times_{\alpha} P)$, which is also called the \emph{Nica-Toeplitz algebra}.
\end{definition}

\begin{remark}
\label{Rem-cov3}
Note that in the definition \ref{NT-CP-df}, as the algebra $B$ can be embedded in some algebra $B(H)$ by a faithful nondegenerate representation, (i) is indeed
equivalent to the following statement:
\begin{itemize}
\item[(1)] the pair $(i_{A}, i_{P})$ is a Nica-Toeplitz covariant representation of $(A,P,\alpha)$ in $B$.
\end{itemize}
This means that the pair $(i_{A}, i_{P})$ consists of a nondegenerate homomorphism $i_{A}:A\rightarrow B$ and a partial-isometric representation
$i_{P}:P\rightarrow \M(B)$ which satisfy the covariance equations
\begin{align}
\label{cov3}
i_{A}(\alpha_{x}(a))=i_{P}(x)i_{A}(a) i_{P}(x)^{*}\ \ \textrm{and}\ \ i_{P}(x)^{*}i_{P}(x) i_{A}(a)=i_{A}(a) i_{P}(x)^{*}i_{P}(x),
\end{align}
and the Nica covariance equation
\begin{align}
\label{cov4}
i_{P}(x)^{*}i_{P}(x)i_{P}(y)^{*}i_{P}(y)=i_{P}(x\vee y)^{*}i_{P}(x\vee y)
\end{align}
for all $a\in A$ and $x,y\in P$. So, it follows that we actually have
\begin{align}
\label{span-B}
B=\overline{\newspan}\{i_{P}(x)^{*} i_{A}(a) i_{P}(y) : x,y \in P, a\in A\}.
\end{align}
To see this, we only have to show that the right hand side of $(\ref{span-B})$ is closed under multiplication. To do so, we apply
the Nica covariance equation (\ref{cov4}) to show that each product
\begin{align}
\label{prod-1}
[i_{P}(x)^{*} i_{A}(a) i_{P}(y)][i_{P}(s)^{*} i_{A}(b) i_{P}(t)]
\end{align}
of spanning elements is an element of the same form. We have
\begin{eqnarray*}
\begin{array}{l}
[i_{P}(x)^{*} i_{A}(a) i_{P}(y)][i_{P}(s)^{*} i_{A}(b) i_{P}(t)]\\
=i_{P}(x)^{*} i_{A}(a) i_{P}(y)[i_{P}(y)^{*}i_{P}(y)i_{P}(s)^{*}i_{P}(s)]i_{P}(s)^{*} i_{A}(b) i_{P}(t)\\
=i_{P}(x)^{*} i_{A}(a) i_{P}(y)i_{P}(z)^{*}i_{P}(z)i_{P}(s)^{*} i_{A}(b) i_{P}(t)\ \ \ \ \ [z:=y\vee s]\\
=i_{P}(x)^{*} i_{A}(a) [i_{P}(y)i_{P}(y)^{*}]i_{P}(zy^{-1})^{*}i_{P}(zs^{-1})[i_{P}(s)i_{P}(s)^{*}] i_{A}(b) i_{P}(t)\\
=i_{P}(x)^{*} i_{A}(a) \overline{i_{A}}(\overline{\alpha}_{y}(1)) i_{P}(zy^{-1})^{*}i_{P}(zs^{-1}) \overline{i_{A}}(\overline{\alpha}_{s}(1))
i_{A}(b) i_{P}(t)\\
=i_{P}(x)^{*} i_{A}(a\overline{\alpha}_{y}(1)) i_{P}(zy^{-1})^{*}i_{P}(zs^{-1}) i_{A}(\overline{\alpha}_{s}(1)b) i_{P}(t)\\
=i_{P}(x)^{*}i_{P}(zy^{-1})^{*} i_{A}(\alpha_{zy^{-1}}(c))  i_{A}(\alpha_{zs^{-1}}(d)) i_{P}(zs^{-1}) i_{P}(t)\\
=i_{P}\big((zy^{-1})x\big)^{*} i_{A}\big(\alpha_{zy^{-1}}(c)\alpha_{zs^{-1}}(d)\big) i_{P}\big((zs^{-1})t\big),\\
\end{array}
\end{eqnarray*}
which belongs to the right hand side of $(\ref{span-B})$, where $c=a\overline{\alpha}_{y}(1)$ and $d=\overline{\alpha}_{s}(1)b$.
Thus, $(\ref{span-B})$ indeed holds.
\end{remark}

We recall that by \cite[Theorem 9.3]{Fowler}, a Nica-Toeplitz covariant representation $(\pi, V)$ of $(A,P,\alpha)$ on $H$ induces a faithful representation $\pi\times V$ of $\T_{\textrm{cov}}(A\times_{\alpha} P)$ if and only if for every
finite subset $F=\{x_{1}, x_{2}, ..., x_{n}\}$ of $P\backslash \{e\}$, $\pi$ is faithful on the range of
$$\prod_{i=1}^{n}(1-V_{x_{i}}^{*}V_{x_{i}})=0.$$

\subsection{Isometric crossed products}
\label{Iso-CP}
An \emph{isometric covariant representation} of $(A,P,\alpha)$ on a Hilbert space $H$ is a pair $(\pi,V)$ consisting of
a nondegenerate representation $\pi:A\rightarrow B(H)$ and an isometric representation $V:P\rightarrow B(H)$ of $P$ such that
\begin{align}
\label{iso-cov}
\pi(\alpha_{x}(a))=V_{x}\pi(a) V_{x}^{*}
\end{align}
for all $a\in A$ and $x\in P$.

\begin{definition}
\label{Iso-CP-df}
An \emph{isometric crossed product} of $(A,P,\alpha)$ is a triple $(C,\mu_{A},\mu_{P})$ consisting of a $C^*$-algebra $C$, a nondegenerate homomorphism $\mu_{A}:A\rightarrow C$, and an isometric representation $\mu_{P}:P\rightarrow \M(C)$ such that:
\begin{itemize}
\item[(i)] $\mu_{A}(\alpha_{x}(a))=\mu_{P}(x)\mu_{A}(a) \mu_{P}(x)^{*}$ for all $a\in A$ and $x\in P$;
\item[(ii)] for every isometric covariant representation $(\pi,V)$ of $(A,P,\alpha)$ on a Hilbert space $H$,
there exists a nondegenerate representation $\pi\times V: C\rightarrow B(H)$ such that $(\pi\times V) \circ \mu_{A}=\pi$ and
$(\overline{\pi\times V}) \circ \mu_{P}=V$; and
\item[(iii)] $C$ is generated by $\{\mu_{A}(a)\mu_{P}(x): a\in A, x\in P\}$, indeed we have
$$C=\overline{\newspan}\{\mu_{P}(x)^{*} \mu_{A}(a) \mu_{P}(y) : x,y \in P, a\in A\}.$$
\end{itemize}
Note that the isometric crossed product of the system $(A,P,\alpha)$ exists if the system admits a nontrivial
(isometric) covariant representation, and it is unique up to isomorphism. Thus, the isometric crossed product of the system
$(A,P,\alpha)$ is denoted by $A\times_{\alpha}^{\iso} P$. We refer readers to \cite{ALNR, Fowler, Marcelo, LacaR, Larsen, Stacey} for more
on isometric crossed products.
\end{definition}
Consider the dynamical system $(A,P,\alpha)$ in which the action
$\alpha$ of $P$ is given by extendible endomorphisms of $A$. Let $(\T_{\textrm{cov}}(A\times_{\alpha} P),i_{A},i_{P})$ and $(A\times_{\alpha}^{\iso} P,\mu_{A},\mu_{P})$ be the Nica-Toeplitz algebra and the isometric crossed product associated with the system, respectively.
One can see that the pair $(\mu_{A},\mu_{P})$ is a Nica-Toeplitz covariant representation of $(A,P,\alpha)$ in the $C^*$-algebra $A\times_{\alpha}^{\iso} P$. Thus, there exists a nondegenerate homomorphism
$$\Omega:(\T_{\textrm{cov}}(A\times_{\alpha} P),i_{A},i_{P})\rightarrow (A\times_{\alpha}^{\iso} P,\mu_{A},\mu_{P})$$ such that
$$\Omega(i_{P}(x)^{*}i_{A}(a)i_{P}(y))=\mu_{P}(x)^{*}\mu_{A}(a)\mu_{P}(y)$$ for all $a\in A$ and $x,y\in P$. So, it follows that $\Omega$ is surjective, and hence, we have the following short exact sequence:
\begin{align}
\label{exseq1}
0 \longrightarrow \ker \Omega \stackrel{}{\longrightarrow} \T_{\textrm{cov}}(A\times_{\alpha} P) \stackrel{\Omega}{\longrightarrow} A\times_{\alpha}^{\iso} P \longrightarrow 0,
\end{align}

Next, we want to identify spanning elements for the ideal $\ker \Omega$. To do so, first, see that, for every $r,s\in P$, by the covariance equation of $(i_{A},i_{P})$, we have
\begin{eqnarray*}
\begin{array}{rcl}
i_{P}(r)[1-i_{P}(s)^{*}i_{P}(s)]&=&i_{P}(r)-i_{P}(r)i_{P}(s)^{*}i_{P}(s)\\
&=&i_{P}(r)-i_{P}(r)[i_{P}(r)^{*}i_{P}(r)i_{P}(s)^{*}i_{P}(s)]\\
&=&i_{P}(r)-i_{P}(r)i_{P}(r\vee s)^{*}i_{P}(r\vee s)\\
&=&i_{P}(r)-i_{P}(r)i_{P}(r)^{*}i_{P}((r\vee s)r^{-1})^{*}i_{P}((r\vee s)r^{-1})i_{P}(r)\\
&=&i_{P}(r)-\overline{i_{A}}(\overline{\alpha}_{r}(1))i_{P}((r\vee s)r^{-1})^{*}i_{P}((r\vee s)r^{-1})i_{P}(r)\\
&=&i_{P}(r)-i_{P}((r\vee s)r^{-1})^{*}i_{P}((r\vee s)r^{-1})\overline{i_{A}}(\overline{\alpha}_{r}(1))i_{P}(r)\\
&=&i_{P}(r)-i_{P}((r\vee s)r^{-1})^{*}i_{P}((r\vee s)r^{-1})i_{P}(r)\overline{i_{A}}(1)\\
&=&[1-i_{P}((r\vee s)r^{-1})^{*}i_{P}((r\vee s)r^{-1})]i_{P}(r).
\end{array}
\end{eqnarray*}
Therefore,
\begin{align}
\label{eq1}
i_{P}(r)[1-i_{P}(s)^{*}i_{P}(s)]=[1-i_{P}((r\vee s)r^{-1})^{*}i_{P}((r\vee s)r^{-1})]i_{P}(r)
\end{align}
for every $r,s\in P$. This equation will be applied in the following proposition regarding the identifying spanning elements for the ideal $\ker \Omega$.

\begin{prop}
\label{ess ideal of NT}
Let $$\I:=\clsp\{i_{P}(x)^{*}i_{A}(a)(1-i_{P}(s)^{*}i_{P}(s))i_{P}(y): a\in A, x,y,s\in P\}.$$ Then $\I$ is an ideal of $(\T_{\textrm{cov}}(A\times_{\alpha} P),i_{A},i_{P})$, and $\ker \Omega=\I$.
\end{prop}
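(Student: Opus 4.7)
The proof has three steps: show $\I \subseteq \ker\Omega$, show $\I$ is an ideal, and show $\ker\Omega \subseteq \I$.

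\textbf{Step 1 (easy inclusion).} Apply $\Omega$ to a generic spanning element of $\I$. Since $\mu_P$ is isometric, $\mu_P(s)^*\mu_P(s) = 1$, so
$\Omega\bigl(i_P(x)^* i_A(a)(1-i_P(s)^* i_P(s)) i_P(y)\bigr) = \mu_P(x)^* \mu_A(a)\cdot 0\cdot \mu_P(y) = 0$,
giving $\I \subseteq \ker\Omega$.

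\textbf{Step 2 (ideal property).} Because $\T_{\textrm{cov}}(A\times_\alpha P)$ is generated by $\{i_A(b),\, i_P(r),\, i_P(r)^*\}$, it suffices to check closure of $\I$ under left and right multiplication by each of these. The main tool is equation (\ref{eq1}) (and its adjoint), which lets one slide the projection $1-i_P(s)^*i_P(s)$ past $i_P(r)$ or $i_P(r)^*$ at the cost of replacing $s$ by $(r\vee s)r^{-1}$. Combined with the covariance relations (\ref{cov3}), the Nica covariance (\ref{cov4}), the identities $i_A(a) i_P(r)^* = i_P(r)^* i_A(\alpha_r(a))$ and $i_P(r) i_A(a) = i_A(\alpha_r(a)) i_P(r)$, and the product rearrangement carried out in the Remark, each product $Tw$ and $wT$ (with $T$ a generator and $w$ a spanning element of $\I$) reduces to a finite sum of spanning elements of $\I$. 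This step carries essentially all of the computational weight and is the main technical obstacle, since one has to mimic the Remark's rearrangement while also dragging the projection along.

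\textbf{Step 3 (reverse inclusion).} Granted that $\I$ is an ideal, let $q:\T_{\textrm{cov}}(A\times_\alpha P) \to \T_{\textrm{cov}}(A\times_\alpha P)/\I$ be the quotient map and $\bar q$ its canonical extension to multiplier algebras. Taking $x=y=e$ in the spanning set of $\I$ yields $i_A(a)(1-i_P(s)^*i_P(s)) \in \I$, so $q(i_A(a))\bigl[1 - \bar q(i_P(s))^* \bar q(i_P(s))\bigr] = 0$ for every $a\in A$ and $s\in P$. Nondegeneracy of $q\circ i_A$ (inherited from that of $i_A$) then forces $\bar q(i_P(s))^*\bar q(i_P(s)) = 1$, so $\bar q\circ i_P$ is an isometric representation and $(q\circ i_A,\, \bar q\circ i_P)$ is an isometric covariant pair. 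The universal property of $A\times_\alpha^{\iso} P$ produces a homomorphism $\Phi: A\times_\alpha^{\iso} P \to \T_{\textrm{cov}}(A\times_\alpha P)/\I$ with $\Phi\circ\mu_A = q\circ i_A$ and $\bar\Phi\circ\mu_P = \bar q\circ i_P$; evaluating on the spanning set of $\T_{\textrm{cov}}(A\times_\alpha P)$ shows $\Phi\circ\Omega = q$. Hence $T\in\ker\Omega$ implies $q(T) = \Phi(0) = 0$, i.e.\ $T\in\I$, which completes the proof.
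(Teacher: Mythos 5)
Your proposal is correct and follows essentially the same route as the paper: the easy inclusion via $\mu_{P}(s)^{*}\mu_{P}(s)=1$, the ideal property via (\ref{eq1}) together with the covariance and Nica-covariance relations (whose computational details you defer but which go through exactly as you describe), and the reverse inclusion by showing that modulo $\I$ the partial isometries become isometries and invoking the universal property of $A\times_{\alpha}^{\iso}P$ to factor the quotient map through $\Omega$. The only cosmetic difference is that the paper realizes your quotient map as a nondegenerate Hilbert-space representation $\sigma$ with $\ker\sigma=\I$, which sidesteps the (harmless) point that the universal property in Definition \ref{Iso-CP-df} is stated for covariant representations on Hilbert space rather than for covariant homomorphisms into an abstract $C^*$-algebra such as $\T_{\textrm{cov}}(A\times_{\alpha}P)/\I$.
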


\begin{proof}
To see that $\I$ is an ideal of $\T_{\textrm{cov}}(A\times_{\alpha} P)$, it suffices to show that $\I$ is a $^{*}$-algebra, and
$i_{P}(t)^{*}\I$, $i_{P}(t)\I$, and $i_{A}(b)\I$ are all contained in $\I$ for every $t\in P$ and $b\in A$
(by only computing on the spanning elements of $\I$). As
$$\big[i_{P}(x)^{*}i_{A}(a)(1-i_{P}(s)^{*}i_{P}(s))i_{P}(y)\big]^{*}=i_{P}(y)^{*}i_{A}(a^{*})(1-i_{P}(s)^{*}i_{P}(s))i_{P}(x)\in \I,$$
$\I$ is indeed a $^{*}$-algebra.

Now, $i_{P}(t)^{*}\I\subset \I$ is trivial. To see the second one, $i_{P}(t)\I\subset \I$, first note that,
by applying the covariance equation of $(i_{A},i_{P})$, we have
\begin{eqnarray*}
\begin{array}{rcl}
i_{P}(t)i_{P}(x)^{*}&=&i_{P}(t)[i_{P}(t)^{*}i_{P}(t)i_{P}(x)^{*}i_{P}(x)]i_{P}(x)^{*}\\
&=&i_{P}(t)i_{P}(t\vee x)^{*}i_{P}(t\vee x)i_{P}(x)^{*}\\
&=&i_{P}(t)i_{P}(t)^{*}i_{P}((t\vee x)t^{-1})^{*}i_{P}((t\vee x)x^{-1})i_{P}(x)i_{P}(x)^{*}\\
&=&\overline{i_{A}}(\overline{\alpha}_{t}(1))i_{P}((t\vee x)t^{-1})^{*}i_{P}((t\vee x)x^{-1})\overline{i_{A}}(\overline{\alpha}_{x}(1))\\
&=&i_{P}((t\vee x)t^{-1})^{*}\overline{i_{A}}(\overline{\alpha}_{(t\vee x)t^{-1}}(\overline{\alpha}_{t}(1)))
\overline{i_{A}}(\overline{\alpha}_{(t\vee x)x^{-1}}(\overline{\alpha}_{x}(1)))i_{P}((t\vee x)x^{-1})\\
&=&i_{P}((t\vee x)t^{-1})^{*}\overline{i_{A}}(\overline{\alpha}_{(t\vee x)}(1))
\overline{i_{A}}(\overline{\alpha}_{(t\vee x)}(1))i_{P}((t\vee x)x^{-1})\\
&=&i_{P}((t\vee x)t^{-1})^{*}\overline{i_{A}}(\overline{\alpha}_{(t\vee x)}(1))i_{P}((t\vee x)x^{-1}).
\end{array}
\end{eqnarray*}
Therefore,
\begin{eqnarray*}
\begin{array}{l}
i_{P}(t)[i_{P}(x)^{*}i_{A}(a)(1-i_{P}(s)^{*}i_{P}(s))i_{P}(y)]\\
=[i_{P}(t)i_{P}(x)^{*}]i_{A}(a)(1-i_{P}(s)^{*}i_{P}(s))i_{P}(y)\\
=i_{P}((t\vee x)t^{-1})^{*}\overline{i_{A}}(\overline{\alpha}_{(t\vee x)}(1))[i_{P}((t\vee x)x^{-1})i_{A}(a)](1-i_{P}(s)^{*}i_{P}(s))i_{P}(y)\\
=i_{P}((t\vee x)t^{-1})^{*}\overline{i_{A}}(\overline{\alpha}_{(t\vee x)}(1))i_{A}(\alpha_{(t\vee x)x^{-1}}(a))
i_{P}((t\vee x)x^{-1})(1-i_{P}(s)^{*}i_{P}(s))i_{P}(y)\\
=i_{P}((t\vee x)t^{-1})^{*}i_{A}(c)[i_{P}(r)(1-i_{P}(s)^{*}i_{P}(s))]i_{P}(y),
\end{array}
\end{eqnarray*}
where $c=\overline{\alpha}_{(t\vee x)}(1)\alpha_{(t\vee x)x^{-1}}(a)\in A$ and $r=(t\vee x)x^{-1}\in P$. Then, in the bottom line,
for $i_{P}(r)(1-i_{P}(s)^{*}i_{P}(s))$, we apply (\ref{eq1}), which gives us
\begin{eqnarray*}
\begin{array}{l}
i_{P}(t)[i_{P}(x)^{*}i_{A}(a)(1-i_{P}(s)^{*}i_{P}(s))i_{P}(y)]\\
=i_{P}((t\vee x)t^{-1})^{*}i_{A}(c)[1-i_{P}((r\vee s)r^{-1})^{*}i_{P}((r\vee s)r^{-1})]i_{P}(r)i_{P}(y)\\
=i_{P}((t\vee x)t^{-1})^{*}i_{A}(c)[1-i_{P}((r\vee s)r^{-1})^{*}i_{P}((r\vee s)r^{-1})]i_{P}(ry)
\end{array}
\end{eqnarray*}
which belongs to $\I$. To see the last one, $i_{A}(b)\I\subset \I$, again by using the covariance equation of $(i_{A},i_{P})$, we see that
\begin{eqnarray*}
\begin{array}{l}
i_{A}(b)[i_{P}(x)^{*}i_{A}(a)(1-i_{P}(s)^{*}i_{P}(s))i_{P}(y)]\\
=[i_{A}(b)i_{P}(x)^{*}]i_{A}(a)(1-i_{P}(s)^{*}i_{P}(s))i_{P}(y)\\
=i_{P}(x)^{*}i_{A}(\alpha_{x}(b))i_{A}(a)(1-i_{P}(s)^{*}i_{P}(s))i_{P}(y)\\
=i_{P}(x)^{*}i_{A}(\alpha_{x}(b)a)(1-i_{P}(s)^{*}i_{P}(s))i_{P}(y)\in \I.
\end{array}
\end{eqnarray*}
Thus, $\I$ is an ideal of $\T_{\textrm{cov}}(A\times_{\alpha} P)$.

Now, we show that $\ker \Omega=\I$. The inclusion $\I \subset \ker \Omega$ follows immediately as $\overline{\Omega}(1-i_{P}(s)^{*}i_{P}(s))=1-\mu_{P}(s)^{*}\mu_{P}(s)=0$ for every $s\in P$. For the other inclusion, take a nondegenerate
representation $\sigma$ of $\T_{\textrm{cov}}(A\times_{\alpha} P)$ on some Hilbert space $H$ such that $\ker \sigma=\I$.
Then, $(\pi,V):=(\sigma\circ i_{A},\overline{\sigma}\circ i_{P})$ is a Nica-Toeplitz covariant representation of $(A,P,\alpha)$ on $H$. However,
each $V_{s}$ is actually an isometry. To see this, take any approximate identity $\{a_{\lambda}\}$ in $A$. Then, we have
$$0=\sigma(i_{A}(a_{\lambda})(1-i_{P}(s)^{*}i_{P}(s)))=\pi(a_{\lambda})(1-V_{s}^{*}V_{s})$$ for each $\lambda$. Since $\pi$ is nondegenerate, it
follows that $\pi(a_{\lambda})(1-V_{s}^{*}V_{s})$ converges strongly to $(1-V_{s}^{*}V_{s})$, and hence, we must have
$1-V_{s}^{*}V_{s}=0$. This implies that the pair $(\pi,V)$ is indeed a covariant isometric representation of
$(A,P,\alpha)$ on $H$. Therefore, there is a nondegenerate representation $\varphi$ of the isometric crossed product
$(A\times_{\alpha}^{\iso} P,\mu_{A},\mu_{P})$ on $H$, such that $\varphi(\mu_{A}(a))=\pi(a)=\sigma(i_{A}(a))$ and $\overline{\varphi}(\mu_{P}(x))=V_{x}=\overline{\sigma}(i_{P}(x))$ for all $a\in A$ and $x\in P$. This implies that
$\varphi \circ \Omega=\sigma$, from which, we conclude that $\ker \Omega \subset \ker \sigma$.
\end{proof}

\section{The $C^*$-algebra $\B$ and its ideal $\J$}
\label{sec:alg B}
Let $(G,P)$ be an abelian lattice-ordered group, and $(A,P,\alpha)$ a dynamical system in which $\alpha$ is an action of $P$ by extendible endomorphisms of a $C^*$-algebra $A$.
Consider the algebra $\ell^{\infty}(G,A)$ of all norm bounded $A$-valued functions of $G$. For every $s\in G$, we define a map $\phi_{s}:A\rightarrow\ell^{\infty}(G,A)$ by
\[
\phi_{s}(a)(x)=
   \begin{cases}
      \alpha_{xs^{-1}}(a) &\textrm{if}\empty\ \text{$s\leq x$}\\
      0 &\textrm{otherwise}.
   \end{cases}
\]
It is not difficult to see that each map $\phi_{s}$ is actually an injective $*$-homomorphism (embedding). Now, let $\B$ be the $C^*$-subalgebra of $\ell^{\infty}(G,A)$ generated by $\{\phi_{s}(a):s\in G, a\in A\}$.
Note that, since $\phi_{s}(a)^{*}=\phi_{s}(a^{*})$, and
\begin{align}
\label{suprem}
\phi_{s}(a)\phi_{t}(b)=\phi_{s\vee t}(\alpha_{(s\vee t)s^{-1}}(a)\alpha_{(s\vee t)t^{-1}}(b)),
\end{align}
we actually have $$\B=\clsp\{\phi_{s}(a):s\in G, a\in A\}.$$

Moreover, the elements of $\B$ satisfy the following property:
\begin{lemma}
\label{B property}
Let $\xi\in\B$. Then, for any $\varepsilon>0$, there are $y,z\in G$ such that if $x<y$, then $\|\xi(x)\|<\varepsilon$, and if $x\geq z$, then
$\|\xi(x)-\alpha_{xz^{-1}}(\xi(z))\|<\varepsilon$.
\end{lemma}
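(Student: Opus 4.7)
The plan is to reduce the claim to the dense subset of finite linear combinations of generators $\phi_{s}(a)$, for which both estimates can be checked exactly, and then transfer the bounds to a general $\xi$ via a standard $\varepsilon/2$ approximation. Fix $\varepsilon>0$; since $\B=\clsp\{\phi_{s}(a):s\in G,\ a\in A\}$, I would choose $\xi_{0}=\sum_{i=1}^{n}\phi_{s_{i}}(a_{i})$ with $\|\xi-\xi_{0}\|_{\infty}<\varepsilon/2$, and then set
\[
y:=s_{1}\wedge\cdots\wedge s_{n},\qquad z:=s_{1}\vee\cdots\vee s_{n},
\]
both of which exist because $(G,P)$ is lattice-ordered.

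Next, I would verify both estimates exactly for $\xi_{0}$. If $x\geq z$, then $x\geq s_{i}$ for every $i$, and the semigroup identity $\alpha_{xs_{i}^{-1}}=\alpha_{xz^{-1}}\circ\alpha_{zs_{i}^{-1}}$ applied termwise yields $\xi_{0}(x)=\alpha_{xz^{-1}}(\xi_{0}(z))$. If $x<y$, then for each $i$ one has $x\leq y\leq s_{i}$ but $x\neq s_{i}$ (otherwise $y=s_{i}=x$ would force $x=y$), so $s_{i}\not\leq x$ by $P\cap P^{-1}=\{e\}$; hence $\phi_{s_{i}}(a_{i})(x)=0$ and $\xi_{0}(x)=0$. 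Combining these equalities with the approximation and the contractivity of the $^{*}$-homomorphism $\alpha_{xz^{-1}}$, the triangle inequality gives $\|\xi(x)\|\leq\|\xi-\xi_{0}\|_{\infty}<\varepsilon$ for $x<y$, and $\|\xi(x)-\alpha_{xz^{-1}}(\xi(z))\|\leq 2\|\xi-\xi_{0}\|_{\infty}<\varepsilon$ for $x\geq z$.

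I do not expect any serious obstacle. The only point requiring some care is the correct handling of the strict order $x<y$ in the partially ordered group in order to conclude $s_{i}\not\leq x$; beyond this, the argument is routine bookkeeping with the meet, the join, and the compatibility $\alpha_{st}=\alpha_{s}\circ\alpha_{t}$ of the $P$-action.
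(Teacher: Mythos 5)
Your proposal is correct and follows essentially the same route as the paper: approximate $\xi$ within $\varepsilon/2$ by a finite sum of generators, take $y$ to be the meet and $z$ the join of the indices, verify both estimates exactly for the finite sum (vanishing below the meet, exact compatibility $\xi_{0}(x)=\alpha_{xz^{-1}}(\xi_{0}(z))$ above the join via $\alpha_{xs_{i}^{-1}}=\alpha_{xz^{-1}}\circ\alpha_{zs_{i}^{-1}}$), and transfer by the triangle inequality using contractivity of $\alpha_{xz^{-1}}$. Your extra care with the strict inequality $x<y$ in the partial order (to rule out $s_{i}\leq x$) is a point the paper leaves implicit, and it is handled correctly.
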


\begin{proof}
For any $\varepsilon>0$, there is a finite sum $\sum_{i=0}^{n}\phi_{y_{i}}(a_{i})$ such that
\begin{align}
\label{eq9}
\|\xi-\sum_{i=0}^{n}\phi_{y_{i}}(a_{i})\|<\varepsilon/2.
\end{align}
Take $y=y_{0}\wedge y_{1}\wedge...\wedge y_{n}$. Then, for every $x\in G$, we have
$$\|(\xi-\sum_{i=0}^{n}\phi_{y_{i}}(a_{i}))(x)\|\leq\|\xi-\sum_{i=0}^{n}\phi_{y_{i}}(a_{i})\|<\varepsilon/2<\varepsilon,$$
and therefore, since
$$\|(\xi-\sum_{i=0}^{n}\phi_{y_{i}}(a_{i}))(x)\|=\|\xi(x)-\sum_{i=0}^{n}\phi_{y_{i}}(a_{i})(x)\|,$$ it follows that
$$\|\xi(x)-\sum_{i=0}^{n}\phi_{y_{i}}(a_{i})(x)\|<\varepsilon.$$
Now, if $x<y$, then $\phi_{y_{i}}(a_{i})(x)=0$ for each $0\leq i\leq n$. So, we have $\|\xi(x)\|<\varepsilon$ for every $x<y$.

Next, take $z=y_{0}\vee y_{1}\vee...\vee y_{n}$, and for convenience, let $\xi_{n}=\sum_{i=0}^{n}\phi_{y_{i}}(a_{i})$. Since $\|\xi-\xi_{n}\|<\varepsilon/2$ by (\ref{eq9}), for every $x\geq z$, we get
\begin{eqnarray*}
\begin{array}{l}
\|\xi(x)-\alpha_{xz^{-1}}(\xi(z))\|\\
=\|(\xi-\xi_{n}+\xi_{n})(x)-\alpha_{xz^{-1}}((\xi-\xi_{n}+\xi_{n})(z))\|\\
=\|(\xi-\xi_{n})(x)+\xi_{n}(x)-\alpha_{xz^{-1}}((\xi-\xi_{n})(z)+\xi_{n}(z))\|\\
=\|(\xi-\xi_{n})(x)+\xi_{n}(x)-\alpha_{xz^{-1}}((\xi-\xi_{n})(z))-\alpha_{xz^{-1}}(\xi_{n}(z))\|\\
\leq\|(\xi-\xi_{n})(x)\|+\|-\alpha_{xz^{-1}}((\xi-\xi_{n})(z))\|+\|\xi_{n}(x)-\alpha_{xz^{-1}}(\xi_{n}(z))\|\\
\leq\|\xi-\xi_{n}\|+\|(\xi-\xi_{n})(z)\|+\|\xi_{n}(x)-\alpha_{xz^{-1}}(\xi_{n}(z))\|\\
\leq\|\xi-\xi_{n}\|+\|\xi-\xi_{n}\|+\|\xi_{n}(x)-\alpha_{xz^{-1}}(\xi_{n}(z))\|\\
<\varepsilon/2+\varepsilon/2+\|\xi_{n}(x)-\alpha_{xz^{-1}}(\xi_{n}(z))\|=\varepsilon+\|\xi_{n}(x)-\alpha_{xz^{-1}}(\xi_{n}(z))\|.
\end{array}
\end{eqnarray*}
Therefore, we have
$$\|\xi(x)-\alpha_{xz^{-1}}(\xi(z))\|<\varepsilon+\|\xi_{n}(x)-\alpha_{xz^{-1}}(\xi_{n}(z))\|.$$
However, since $x\geq z$, the following calculation
\begin{eqnarray*}
\begin{array}{rcl}
\xi_{n}(x)-\alpha_{xz^{-1}}(\xi_{n}(z))&=&\sum_{i=0}^{n}\phi_{y_{i}}(a_{i})(x)-\alpha_{xz^{-1}}(\sum_{i=0}^{n}\phi_{y_{i}}(a_{i})(z))\\
&=&\sum_{i=0}^{n}\alpha_{xy_{i}^{-1}}(a_{i})-\alpha_{xz^{-1}}(\sum_{i=0}^{n}\alpha_{zy_{i}^{-1}}(a_{i}))\\
&=&\sum_{i=0}^{n}\alpha_{xy_{i}^{-1}}(a_{i})-\sum_{i=0}^{n}\alpha_{xz^{-1}}(\alpha_{zy_{i}^{-1}}(a_{i}))\\
&=&\sum_{i=0}^{n}\alpha_{xy_{i}^{-1}}(a_{i})-\sum_{i=0}^{n}\alpha_{xy_{i}^{-1}}(a_{i})=0
\end{array}
\end{eqnarray*}
shows that actually $\|\xi_{n}(x)-\alpha_{xz^{-1}}\xi_{n}(z))\|=0$.
It thus follows that $\|\xi(x)-\alpha_{xz^{-1}}(\xi(z))\|<\varepsilon$ for every $x\geq z$.
\end{proof}

\begin{lemma}
\label{mux}
Each homomorphism $\phi_{s}:A\rightarrow\mathcal{B}$ extends to a strictly continuous homomorphism $\overline{\phi}_{s}:\M(A)\rightarrow\M(\B)$ of multiplier algebras.
\end{lemma}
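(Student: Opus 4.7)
The plan is to verify the standard criterion recalled in the introduction: exhibit an approximate identity $\{a_\lambda\}$ of $A$ such that $\phi_s(a_\lambda)$ converges strictly in $\M(\B)$ to a projection; this is equivalent to $\phi_s$ being extendible, hence to the existence of the sought strictly continuous extension $\overline{\phi}_s:\M(A)\to\M(\B)$.

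\textbf{Step 1: the candidate multiplier.} First I would guess the limit from the formula for $\phi_s$: define $p_s\in \ell^{\infty}(G,\M(A))$ by $p_s(x)=\overline{\alpha}_{xs^{-1}}(1_{\M(A)})$ when $s\leq x$ and $p_s(x)=0$ otherwise. Since each $\overline{\alpha}_{xs^{-1}}(1)$ is a projection in $\M(A)$, the function $p_s$ is a self-adjoint idempotent in $\ell^{\infty}(G,\M(A))$.

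\textbf{Step 2: $p_s$ is a multiplier of $\B$.} I would show that pointwise multiplication by $p_s$ sends $\B$ into $\B$ by computing on generators. For $s,t\in G$, set $u=s\vee t$; then using the semigroup property of $\alpha$,
\begin{equation*}
(p_s\phi_t(b))(x)=\overline{\alpha}_{xs^{-1}}(1)\alpha_{xt^{-1}}(b)=\alpha_{xu^{-1}}\bigl(\overline{\alpha}_{us^{-1}}(1)\alpha_{ut^{-1}}(b)\bigr)
\end{equation*}
whenever $u\leq x$ (and $0$ otherwise), so that
\begin{equation*}
p_s\phi_t(b)=\phi_{s\vee t}\bigl(\overline{\alpha}_{(s\vee t)s^{-1}}(1)\alpha_{(s\vee t)t^{-1}}(b)\bigr)\in\B,
\end{equation*}
and similarly on the right. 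Since pointwise multiplication trivially satisfies $(\xi p_s)\eta=\xi(p_s\eta)$ inside $\ell^{\infty}(G,A)$, the pair of left and right multiplication operators by $p_s$ yields a projection in $\M(\B)$.

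\textbf{Step 3: strict convergence on generators.} Pick any approximate identity $\{a_\lambda\}$ of $A$. Using \eqref{suprem},
\begin{equation*}
\phi_s(a_\lambda)\phi_t(b)=\phi_{s\vee t}\bigl(\alpha_{(s\vee t)s^{-1}}(a_\lambda)\,\alpha_{(s\vee t)t^{-1}}(b)\bigr).
\end{equation*}
Extendibility of $\alpha_{(s\vee t)s^{-1}}$ gives $\alpha_{(s\vee t)s^{-1}}(a_\lambda)c\to\overline{\alpha}_{(s\vee t)s^{-1}}(1)c$ in norm for every $c\in A$; taking $c=\alpha_{(s\vee t)t^{-1}}(b)$ and using that $\phi_{s\vee t}$ is isometric, we obtain $\|\phi_s(a_\lambda)\phi_t(b)-p_s\phi_t(b)\|\to0$, and symmetrically on the right.

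\textbf{Step 4: extension to all of $\B$, and conclusion.} Since $\{\phi_t(b):t\in G,b\in A\}$ spans a norm-dense subset of $\B$ and $\|\phi_s(a_\lambda)\|\leq 1$, a routine three-$\varepsilon$ argument upgrades Step 3 to $\|\phi_s(a_\lambda)\xi-p_s\xi\|\to0$ and $\|\xi\phi_s(a_\lambda)-\xi p_s\|\to0$ for every $\xi\in\B$. Hence $\phi_s(a_\lambda)$ converges strictly in $\M(\B)$ to the projection $p_s$, so $\phi_s$ is extendible and determines a strictly continuous homomorphism $\overline{\phi}_s:\M(A)\to\M(\B)$ with $\overline{\phi}_s(1_{\M(A)})=p_s$.

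\textbf{Expected difficulty.} There is no real obstacle; the only point requiring care is the algebraic identity in Step 2 used to verify that $p_s\cdot\phi_t(b)$ lands in $\B$, which forces one to repeatedly apply the semigroup property of $\alpha$ and to factor through $x=u\cdot(xu^{-1})$ with $u=s\vee t$. Lemma~\ref{B property} is not needed for this lemma itself.
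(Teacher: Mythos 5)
Your proof is correct and follows essentially the same route as the paper: both identify the candidate projection as the function $x\mapsto\overline{\alpha}_{xs^{-1}}(1)$ supported on $\{x: s\leq x\}$, verify via the same computation with $u=s\vee t$ that it multiplies the generators $\phi_t(b)$ back into $\B$, and deduce strict convergence of $\phi_s(a_\lambda)$ from the extendibility of the $\alpha$'s and the isometry of the $\phi$'s. The only cosmetic difference is that the paper realizes this projection as $\chi_s(1)$ inside an auxiliary algebra $\overline{\B}\subset\ell^{\infty}(G,\M(A))$ containing $\B$ as an essential ideal, whereas you check directly that pointwise multiplication by $p_s$ defines a double centralizer of $\B$; both are fine.
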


\begin{proof}
Let  $\{a_{\lambda}\}$ be an approximate identity in $A$. We show that there exists a projection $p_{s}\in\M(\B)$
such that $\phi_{s}(a_{\lambda})\rightarrow p_{s}$ strictly in $\M(\B)$. It suffices to see that $\phi_{s}(a_{\lambda})\phi_{t}(a)\rightarrow p_{s}\phi_{t}(a)$ and $\phi_{t}(a)\phi_{s}(a_{\lambda})\rightarrow \phi_{t}(a)p_{s}$
in the norm topology of $\B$ for every $a\in A$ and $t\in G$. Consider the algebra $\ell^{\infty}(G,\M(A))$ which contains $\ell^{\infty}(G,A)$ as an essential ideal.
Then, similar to $\B$, define $\overline{\B}$ to be the $C^*$-subalgebra of $\ell^{\infty}(G,\M(A))$ spanned by $\{\chi_{s}(m): s\in G, m\in \M(A)\}$, where
$\chi_{s}:\M(A)\rightarrow\ell^{\infty}(G,\M(A))$ is a map defined by
\[
\chi_{s}(m)(x)=
   \begin{cases}
      \overline{\alpha}_{xs^{-1}}(m) &\textrm{if}\empty\ \text{$s\leq x$}\\
      0 &\textrm{otherwise}.
   \end{cases}
\]
Each $\chi_{s}$ is then an embedding such that $\chi_{s}|_{A}=\phi_{s}$. Now, since $\B$ sits in $\overline{\B}$ as an essential ideal, $\overline{\B}$ sits in $\M(\B)$ as a $C^*$-subalgebra.
Let $p_{s}=\chi_{s}(1)$ for every $s\in G$, which is a projection in $\M(\B)$. Then, by (\ref{suprem}), we have
$$\phi_{s}(a_{\lambda})\phi_{t}(a)=\phi_{s\vee t}(\alpha_{(s\vee t)s^{-1}}(a_{\lambda})\alpha_{(s\vee t)t^{-1}}(a)),$$ which is convergent to
$$\phi_{s\vee t}(\overline{\alpha}_{(s\vee t)s^{-1}}(1)\alpha_{(s\vee t)t^{-1}}(a))$$ in the norm topology of $\B$ (This is due to the facts that each $\alpha_{x}$ is extendible, and each $\phi_{s}$ is an isometry).
On the other hand, again by a similar equation to (\ref{suprem}) for the spanning elements $\chi_{s}$ of $\overline{\B}$,
\begin{eqnarray*}
\begin{array}{rcl}
p_{s}\phi_{t}(a)=\chi_{s}(1)\phi_{t}(a)&=&\chi_{s}(1)\chi_{t}(a)\\
&=&\chi_{s\vee t}(\overline{\alpha}_{(s\vee t)s^{-1}}(1)\overline{\alpha}_{(s\vee t)t^{-1}}(a))\\
&=&\chi_{s\vee t}(\overline{\alpha}_{(s\vee t)s^{-1}}(1)\alpha_{(s\vee t)t^{-1}}(a))\\
&=&\phi_{s\vee t}(\overline{\alpha}_{(s\vee t)s^{-1}}(1)\alpha_{(s\vee t)t^{-1}}(a))
\end{array}
\end{eqnarray*}
Thus, it follows that $\phi_{s}(a_{\lambda})\phi_{t}(a)$ is indeed convergent to $p_{s}\phi_{t}(a)$ in $\B$.
We also have $\phi_{t}(a)\phi_{s}(a_{\lambda})\rightarrow \phi_{t}(a)p_{s}$ by a similar argument, and therefore each $\phi_{s}$ is extendible.
\end{proof}

\begin{remark}
\label{computation}
Note that, therefore, by Lemma \ref{mux}, we have $\overline{\phi}_{s}=\chi_{s}$ for every $s\in G$. Also, we would like to recall that
\begin{align}
\label{suprem2}
\overline{\phi}_{s}(m)\overline{\phi}_{t}(n)=\overline{\phi}_{s\vee t}(\overline{\alpha}_{(s\vee t)s^{-1}}(m)\overline{\alpha}_{(s\vee t)t^{-1}}(n))
\end{align}
for all $s,t\in G$ and $m,n\in\M(A)$. So, in particular, if $s\leq t$, then, since $s\vee t=t$,
\begin{align}
\label{suprem3}
\overline{\phi}_{s}(m)\overline{\phi}_{t}(n)=\overline{\phi}_{t}(\overline{\alpha}_{ts^{-1}}(m) n),
\end{align}
and similarly,
\begin{align}
\label{suprem4}
\overline{\phi}_{s}(m)\overline{\phi}_{t}(n)=\overline{\phi}_{s}(m \overline{\alpha}_{st^{-1}}(n)),
\end{align}
if $t\leq s$. These equations have key roles in some computations in section \ref{sec:full piso}.
\end{remark}

Next, let $\J$ be the $C^*$-subalgebra of $\B$ generated by $\{\phi_{s}(a)-\phi_{t}(\alpha_{ts^{-1}}(a)): s<t\in G, a\in A\}$.
\begin{prop}
\label{ideal J}
We have
\begin{align}\label{J-span}
\J=\clsp\{\phi_{s}(a)-\phi_{t}(\alpha_{ts^{-1}}(a)): s<t\in G, a\in A\},
\end{align}
which is in fact an essential ideal of $\B$.
\end{prop}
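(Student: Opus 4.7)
The plan is to handle the two assertions separately: first that the given generating set spans $\J$ densely, then that the resulting ideal is essential in $\B$.

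For (\ref{J-span}), I write $d(s,t,a) := \phi_{s}(a) - \phi_{t}(\alpha_{ts^{-1}}(a))$ for the typical generator and let $\K$ denote the closed linear span on the right-hand side. Since $d(s,t,a)^{*} = d(s,t,a^{*})$, the space $\K$ is self-adjoint, so it suffices to check that $\phi_{r}(b)\,\K \subset \K$ for every spanning element $\phi_{r}(b)$ of $\B$: this already promotes $\K$ to a two-sided ideal of $\B$ containing the generators, and hence forces $\K = \J$, proving both (\ref{J-span}) and the ideal assertion. Applying (\ref{suprem}) and setting $u := r \vee s$ and $v := r \vee t$ (so $u \leq v$ because $s \leq t$), I expect $\phi_{r}(b)\, d(s,t,a)$ to collapse, via the identity $\alpha_{(r\vee t)t^{-1}}\circ \alpha_{ts^{-1}} = \alpha_{vu^{-1}}\circ \alpha_{us^{-1}}$, to $\phi_{u}(c) - \phi_{v}(\alpha_{vu^{-1}}(c))$ with $c := \alpha_{ur^{-1}}(b)\,\alpha_{us^{-1}}(a) \in A$. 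This is either $0$ (when $u = v$) or an honest generator $d(u,v,c)$ of $\K$; in either case it lies in $\K$.

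For essentialness, I plan to show directly that the annihilator of $\J$ in $\B$ is trivial, which suffices because $\J$ is self-adjoint. Multiplication in $\B \subset \ell^{\infty}(G,A)$ is pointwise, so for any $\xi \in \B$ and $x \in G$ an immediate computation gives
\[
(\xi\cdot d(s,t,a))(x) = \xi(x)\,\alpha_{xs^{-1}}(a) \quad \text{when}\ s\leq x\ \text{and}\ t \not\leq x,
\]
while $(\xi\cdot d(s,t,a))(x) = 0$ in all other cases (either both contributions vanish because $s\not\leq x$, or they coincide and cancel because $t \leq x$). To exploit this I fix an arbitrary $x \in G$ and $a \in A$, pick any $p \in P\setminus\{e\}$ (available since we may assume $G$ nontrivial), and specialize to $s := x$, $t := xp$; then $s\leq x$ and $t\not\leq x$, and the evaluation collapses to $\xi(x)a$. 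The hypothesis $\xi \J = 0$ therefore forces $\xi(x)a = 0$ for every $a \in A$, so $\xi(x) = 0$; as $x$ was arbitrary, $\xi = 0$.

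The main obstacle is the bookkeeping in the first part: unlike the totally ordered setting of \cite{SZ}, $r\vee s$ and $r\vee t$ need not coincide with any of $r,s,t$, and the consolidation of the product into a single $d(u,v,c)$ relies essentially on the lattice identities summarized in Remark \ref{computation}. The essentialness argument is comparatively short, provided $G \neq \{e\}$ (which is tacit throughout, since otherwise $\J$ would be the zero ideal and the assertion would be vacuous).
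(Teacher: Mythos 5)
Your proposal is correct and follows essentially the same route as the paper: the identity $\phi_{r}(b)\,d(s,t,a)=\phi_{u}(c)-\phi_{v}(\alpha_{vu^{-1}}(c))$ with $u=r\vee s$, $v=r\vee t$ is exactly the paper's key computation, and your essentialness argument (multiply by a generator $d(x,xp,a)$ and evaluate at $x$, where only the first term survives) is the same trick the paper uses with $a=\xi(s)^{*}$. The only cosmetic difference is that you deduce multiplicative closure of the span from the ideal property rather than computing products of two generators directly, and you explicitly flag the tacit hypothesis $P\neq\{e\}$, which the paper also needs.
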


\begin{proof}
Firstly, for all $a,b\in A$ and $r,s,t\in G$ with $s<t$, we have
\begin{eqnarray}
\label{eqt-1}
\begin{array}{l}
\phi_{r}(b)[\phi_{s}(a)-\phi_{t}(\alpha_{ts^{-1}}(a))]\\
=\phi_{r}(b)\phi_{s}(a)-\phi_{r}(b)\phi_{t}(\alpha_{ts^{-1}}(a))\\
=\phi_{r\vee s}\big(\alpha_{(r\vee s)r^{-1}}(b)\alpha_{(r\vee s)s^{-1}}(a)\big)-\phi_{r\vee t}\big(\alpha_{(r\vee t)r^{-1}}(b)\alpha_{(r\vee t)t^{-1}}(\alpha_{ts^{-1}}(a))\big)\\
=\phi_{r\vee s}\big(\alpha_{(r\vee s)r^{-1}}(b)\alpha_{(r\vee s)s^{-1}}(a)\big)-\phi_{r\vee t}\big(\alpha_{(r\vee t)r^{-1}}(b)\alpha_{(r\vee t)s^{-1}}(a)\big)\\
=\phi_{x}(c)-\phi_{y}(\alpha_{yx^{-1}}(c))\in \J,
\end{array}
\end{eqnarray}
where $c=\alpha_{(r\vee s)r^{-1}}(b)\alpha_{(r\vee s)s^{-1}}(a)$, $x=r\vee s$, and $y=r\vee t$ (note that as $s<t$, $x\leq y$).
Now, by applying (\ref{eqt-1}), one can see that, for all $a,b\in A$ and $s,t,x,y\in G$ with $s<t$ and $x<y$, the product
$$[\phi_{s}(a)-\phi_{t}(\alpha_{ts^{-1}}(a))][\phi_{x}(b)-\phi_{y}(\alpha_{yx^{-1}}(b))]$$
of the spanning elements of $\J$ equals the sum of two elements of the same form.
Moreover,
$$[\phi_{s}(a)-\phi_{t}(\alpha_{ts^{-1}}(a))]^{*}
=\phi_{s}(a)^{*}-\phi_{t}(\alpha_{ts^{-1}}(a))^{*}=\phi_{s}(a^{*})-\phi_{t}(\alpha_{ts^{-1}}(a^{*}))\in \J.$$
Therefore, (\ref{J-span}) is indeed true. Note that, (\ref{eqt-1}) also implies that $\J$ is actually an ideal of $\B$.

Finally we show that $\J$ is an essential ideal of $\B$. If $\xi\J=0$ for some $\xi\in\B$, then for each $s\in G$, $$\xi[\phi_{s}(\xi(s)^{*})-\phi_{t}(\alpha_{ts^{-1}}(\xi(s)^{*}))]=0,$$ where $t\in G$ with $s<t$.
So, we must have $\xi(s)\xi(s)^{*}=0$ in $A$ for each $s\in G$, which implies that $\xi(s)=0$. Thus $\xi=0$, and therefore $\J$ is essential.
\end{proof}

Note that a simple calculation shows that
\begin{align}
\label{eq3}
[\phi_{s}(a)-\phi_{t}(\alpha_{ts^{-1}}(a))][\phi_{s}(b)-\phi_{t}(\alpha_{ts^{-1}}(b))]=\phi_{s}(ab)-\phi_{t}(\alpha_{ts^{-1}}(ab))
\end{align}
is valid for all $a,b\in A$ and $s<t\in G$. This equation will be applied later in Lemma \ref{ker}.

For the lemma which follows next, we recall that the dynamical system $(A,P,\alpha)$
gives rise to a directed system $(A_{s},\varphi_{s}^{t})_{s,t\in G}$ such that $A_{s}=A$
for every $s\in G$, and each homomorphism $\varphi_{s}^{t}:A_{s}\rightarrow A_{t}$ is given by $\alpha_{ts^{-1}}$
for all $s,t\in G$ with $s\leq t$. Let $A_{\infty}$ be the direct limit of the directed system.
If $\alpha^{s}:A_{s}\rightarrow A_{\infty}$ is the canonical homomorphism of $A_{s}$ into $A_{\infty}$ for every $s\in G$,
then $\bigcup_{s\in G}\alpha^{s}(A_{s})$ is a dense subalgebra of $A_{\infty}$.
However, since $\alpha^{s}(a)=\alpha^{(s\vee e)}(\alpha_{(s\vee e)s^{-1}}(a))$ for every $s\in G$, it follows
that $A_{\infty}=\overline{\bigcup_{s\in G}\alpha^{s}(A_{s})}=\overline{\bigcup_{s\in P}\alpha^{s}(A_{s})}$.
Moreover, there is an action $\alpha_{\infty}$ of $G$ on the $C^*$-algebra $A_{\infty}$ by automorphisms such that
$$(\alpha_{\infty})_{t}\circ \alpha^{s}=\alpha^{t^{-1}s}$$ for all $s,t\in G$.
Therefore, we obtain an automorphic dynamical system $(A_{\infty},G,\alpha_{\infty})$ for which, we say, it is obtained by the
\emph{dilation} of the (semigroup) dynamical system $(A,P,\alpha)$.
Note that, we might get $0$ $C^*$-algebra for $A_{\infty}$. However, if each endomorphism $\alpha_{x}$ in the
system $(A,P,\alpha)$ is injective, then this ensures that $A_{\infty}\neq 0$. Also, in this case, each canonical homomorphism
$\alpha^{s}:A_{s}\rightarrow A_{\infty}$ clearly becomes an isometry (see more in \cite[Appendix L]{Wegge} or \cite[Proposition 11.4.1]{KR}).
For the following lemma only, we actually require each endomorphism $\alpha_{x}$ to be injective.

\begin{lemma}
\label{B extension}
Let $(G,P)$ be an abelian lattice-ordered group, and $(A,P,\alpha)$ a dynamical system in which the action
$\alpha$ of $P$ is given by extendible injective endomorphisms of $A$. Then, there is a surjective
homomorphism $\sigma:\B\rightarrow A_{\infty}$ such that $\sigma(\phi_{y}(a))=\alpha^{y}(a)$ for every $a\in A$ and $y\in G$. Moreover,
\begin{align}
\label{ker sigma}
\ker \sigma=\{\xi\in\B:\ \textrm{the net}\ \{\|\xi(x)\|\}_{x\in G}\ \textrm{converges to}\ 0\},
\end{align}
which contains the ideal $\J$.
\end{lemma}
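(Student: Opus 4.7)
\textbf{The plan} is to construct $\sigma$ first on the dense $*$-subalgebra of finite sums $\xi_n = \sum_{i=1}^n c_i\,\phi_{y_i}(a_i)$ via
\[
\sigma(\xi_n) := \sum_{i=1}^n c_i\,\alpha^{y_i}(a_i) \in A_{\infty},
\]
and then extend by continuity. The central observation, which controls well-definedness, the norm bound, and the eventual kernel description simultaneously, is this: setting $z := y_1 \vee \cdots \vee y_n$, the direct-limit relation $\alpha^{y_i} = \alpha^{z} \circ \alpha_{z y_i^{-1}}$ gives $\sigma(\xi_n) = \alpha^{z}(\xi_n(z))$. Injectivity of each $\alpha_x$ forces each canonical map $\alpha^{s}$ to be an isometric embedding, so $\|\sigma(\xi_n)\| = \|\xi_n(z)\| \le \|\xi_n\|_\infty$; this shows $\sigma$ is well defined (independent of the chosen expression for $\xi_n$) and contractive, hence extends to a bounded linear map $\sigma : \B \to A_{\infty}$.

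Next I would verify on generators that $\sigma$ is a $*$-homomorphism. Linearity and $*$-preservation are immediate. Multiplicativity follows by comparing \eqref{suprem} with the parallel identity $\alpha^{s}(a)\alpha^{t}(b) = \alpha^{s\vee t}\!\bigl(\alpha_{(s\vee t)s^{-1}}(a)\,\alpha_{(s\vee t)t^{-1}}(b)\bigr)$ in $A_{\infty}$, which itself follows from $\alpha^{s} = \alpha^{s\vee t} \circ \alpha_{(s\vee t)s^{-1}}$ and the homomorphism property of $\alpha^{s\vee t}$. Surjectivity is immediate from $A_{\infty} = \overline{\bigcup_{s\in G} \alpha^{s}(A_s)}$ together with $\alpha^{s}(a) = \sigma(\phi_s(a))$.

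The main work is the kernel description, and the hard step is showing that $\alpha^{x}(\xi(x)) \to \sigma(\xi)$ as $x$ increases in the directed set $G$. Given $\xi \in \B$ and $\varepsilon > 0$, I would use Lemma \ref{B property} to pick a finite sum $\xi_n = \sum_i c_i \phi_{y_i}(a_i)$ with $\|\xi - \xi_n\| < \varepsilon$ and set $z := \bigvee_i y_i$. For every $x \ge z$ the stabilisation $\sigma(\xi_n) = \alpha^{x}(\xi_n(x))$ combined with contractivity of $\sigma$ and isometry of $\alpha^{x}$ yields
\[
\bigl\|\sigma(\xi) - \alpha^{x}(\xi(x))\bigr\| \le \|\sigma(\xi) - \sigma(\xi_n)\| + \|\xi_n(x) - \xi(x)\| \le 2\varepsilon.
\]
Taking norms and using that $\alpha^{x}$ is isometric gives $\|\xi(x)\| \to \|\sigma(\xi)\|$, from which the claimed equality $\ker \sigma = \{\xi \in \B : \|\xi(x)\| \to 0\}$ follows at once. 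Finally, $\J \subseteq \ker \sigma$ is a one-line check on the spanning elements $\phi_s(a) - \phi_t(\alpha_{ts^{-1}}(a))$ using $\alpha^{s} = \alpha^{t} \circ \alpha_{ts^{-1}}$. The principal obstacle is therefore the convergence argument $\alpha^{x}(\xi(x)) \to \sigma(\xi)$, which rests entirely on the approximation Lemma \ref{B property}; everything else flows from the evaluation-at-$z$ identity.
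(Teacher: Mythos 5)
Your proposal is correct and follows essentially the same route as the paper: evaluate finite sums at (or above) the join $z$ of the indices, use that injectivity of the $\alpha_x$ makes each $\alpha^z$ isometric to get well-definedness and contractivity, verify the $*$-homomorphism property on generators via \eqref{suprem}, and characterize the kernel by an $\varepsilon$-approximation with a finite sum that stabilizes above $z$. Your packaging of all three steps through the single identity $\sigma(\xi_n)=\alpha^{x}(\xi_n(x))$ for $x\geq z$, yielding $\|\xi(x)\|\to\|\sigma(\xi)\|$, is a slightly tidier presentation of the same argument (the paper instead proves uniqueness of the representation $\sum_i\phi_{y_i}(a_i)$ and the two kernel inclusions separately), and your appeal to Lemma \ref{B property} is really just the density of $\operatorname{span}\{\phi_y(a)\}$ in $\B$.
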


\begin{proof}
Define a map of the dense subalgebra $\lsp\{\phi_{y}(a):y\in G, a\in A\}$ of $\B$ into $A_{\infty}$
such that $\sum_{i=0}^{n}\phi_{y_{i}}(a_{i})\mapsto \sum_{i=0}^{n}\alpha^{y_{i}}(a_{i})$. Prior to seeing that this map is well-defined,
we have to show that the representation $\sum_{i=0}^{n}\phi_{y_{i}}(a_{i})$ is a unique representation of that element. To do so,
it is enough to see that if $\sum_{i=0}^{n}\phi_{y_{i}}(a_{i})=0$, then every $a_{i}$ is zero. Thus, suppose that
$\sum_{i=0}^{n}\phi_{y_{i}}(a_{i})=0$. If $y_{j}$ is minimal in $\{y_{0},y_{1},...,y_{n}\}$, then we have
$$a_{j}=\alpha_{y_{j}y_{j}^{-1}}(a_{j})=\sum_{i=0}^{n}\phi_{y_{i}}(a_{i})(y_{j})
=\bigg(\sum_{i=0}^{n}\phi_{y_{i}}(a_{i})\bigg)(y_{j})=0.$$
Moreover, we get $\sum_{i=0}^{n}\phi_{y_{i}}(a_{i})=0$ in which $i\neq j$. Moving inductively we get $a_{i}=0$ for
every $0\leq i\leq n$. Now, the map $\sum_{i=0}^{n}\phi_{y_{i}}(a_{i})\mapsto \sum_{i=0}^{n}\alpha^{y_{i}}(a_{i})$ is well-defined.
This is due to the fact that, if $z=y_{0}\vee y_{1}\vee...\vee y_{n}$, then
\begin{eqnarray*}
\begin{array}{rcl}
\|\sum_{i=0}^{n}\alpha^{y_{i}}(a_{i})\|&=&\|\sum_{i=0}^{n}\alpha^{z}(\alpha_{zy_{i}^{-1}}(a_{i}))\|\\
&=&\|\alpha^{z}\big(\sum_{i=0}^{n}\alpha_{zy_{i}^{-1}}(a_{i})\big)\|\\
&=&\|\sum_{i=0}^{n}\alpha_{zy_{i}^{-1}}(a_{i})\|\\
&=&\|\sum_{i=0}^{n} \phi_{y_{i}}(a_{i})(z)\|\\
&=&\|(\sum_{i=0}^{n}\phi_{y_{i}}(a_{i}))(z)\|\\
&\leq&\|\sum_{i=0}^{n}\phi_{y_{i}}(a_{i})\|.
\end{array}
\end{eqnarray*}
This map is linear and bounded, too, and therefore, it extends to
a bounded linear map $\sigma:\B\rightarrow A_{\infty}$ such that $\sigma(\phi_{y}(a))=\alpha^{y}(a)$. The map
$\sigma$ is obviously surjective. In addition, since
$$\sigma(\phi_{y}(a))^{*}=\alpha^{y}(a)^{*}=\alpha^{y}(a^{*})=\sigma(\phi_{y}(a^{*}))=\sigma(\phi_{y}(a)^{*}),$$ and
\begin{eqnarray*}
\begin{array}{rcl}
\sigma(\phi_{x}(a)\phi_{y}(b))&=&\sigma(\phi_{z}(\alpha_{zx^{-1}}(a)\alpha_{zy^{-1}}(b)))\\
&=&\alpha^{z}(\alpha_{zx^{-1}}(a)\alpha_{zy^{-1}}(b))\\
&=&\alpha^{z}(\alpha_{zx^{-1}}(a))\alpha^{z}(\alpha_{zy^{-1}}(b))\\
&=&\alpha^{x}(a)\alpha^{y}(b)=\sigma(\phi_{x}(a))\sigma(\phi_{y}(b)),
\end{array}
\end{eqnarray*}
where $z=x\vee y$, it follows that $\sigma$ is indeed a surjective $*$-homomorphism.

Next, before we identify $\ker \sigma$, note that, for each spanning element $\phi_{x}(a)-\phi_{y}(\alpha_{yx^{-1}}(a))$ of $\J$,
where $a\in A$ and $x<y\in G$, we have
$$\sigma(\phi_{x}(a)-\phi_{y}(\alpha_{yx^{-1}}(a)))=\alpha^{x}(a)-\alpha^{y}(\alpha_{yx^{-1}}(a))
=\alpha^{y}(yx^{-1}(a))-\alpha^{y}(yx^{-1}(a))=0.$$
Thus, we conclude that the ideal $\J$ is contained in $\ker \sigma$.

Now, to prove (\ref{ker sigma}), first, let $\xi\in \ker \sigma$. Then, for every $\varepsilon>0$, there is a finite sum $\sum_{i=0}^{n}\phi_{y_{i}}(a_{i})$ such that $\|\sum_{i=0}^{n}\phi_{y_{i}}(a_{i})-\xi\|<\varepsilon/2$, and therefore,
\begin{eqnarray}
\label{eq6}
\begin{array}{rcl}
\|\sum_{i=0}^{n}\alpha^{y_{i}}(a_{i})\|&=&\|\sigma(\sum_{i=0}^{n}\phi_{y_{i}}(a_{i}))-\sigma(\xi)\|\\
&=&\|\sigma(\sum_{i=0}^{n}\phi_{y_{i}}(a_{i})-\xi)\|\\
&\leq&\|\sum_{i=0}^{n}\phi_{y_{i}}(a_{i})-\xi\|<\varepsilon/2.
\end{array}
\end{eqnarray}
It thus follows that, if $z=y_{0}\vee y_{1}\vee...\vee y_{n}$, then for every $x\geq z$, we have
\begin{eqnarray}
\label{eq12}
\begin{array}{rcl}
\|\sum_{i=0}^{n}\alpha_{xy_{i}^{-1}}(a_{i})\|&=&\|\sum_{i=0}^{n}\alpha_{xz^{-1}}(\alpha_{zy_{i}^{-1}}(a_{i}))\|\\
&=&\|\alpha_{xz^{-1}}\big(\sum_{i=0}^{n}\alpha_{zy_{i}^{-1}}(a_{i})\big)\|\\
&=&\|\sum_{i=0}^{n}\alpha_{zy_{i}^{-1}}(a_{i})\|\\
&=&\|\alpha^{z}(\sum_{i=0}^{n}\alpha_{zy_{i}^{-1}}(a_{i}))\|\\
&=&\|\sum_{i=0}^{n}\alpha^{z}(\alpha_{zy_{i}^{-1}}(a_{i}))\|\\
&=&\|\sum_{i=0}^{n}\alpha^{y_{i}}(a_{i})\|,
\end{array}
\end{eqnarray}
and hence, by (\ref{eq6}) and (\ref{eq12}), we get
 \begin{align}
\label{eq7}
\|\sum_{i=0}^{n}\alpha_{xy_{i}^{-1}}(a_{i})\|=\|\sum_{i=0}^{n}\alpha^{y_{i}}(a_{i})\|
\leq \|\sum_{i=0}^{n}\phi_{y_{i}}(a_{i})-\xi\|<\varepsilon/2.
\end{align}
Consequently, if $x\geq z$, then by (\ref{eq7}),
\begin{eqnarray*}
\begin{array}{rcl}
\|\xi(x)\|&=&\|\xi(x)-(\sum_{i=0}^{n}\phi_{y_{i}}(a_{i}))(x)+(\sum_{i=0}^{n}\phi_{y_{i}}(a_{i}))(x)\|\\
&=&\|(\xi-\sum_{i=0}^{n}\phi_{y_{i}}(a_{i}))(x)+\sum_{i=0}^{n} \phi_{y_{i}}(a_{i})(x)\|\\
&\leq&\|(\xi-\sum_{i=0}^{n}\phi_{y_{i}}(a_{i}))(x)\|+\|\sum_{i=0}^{n}\alpha_{xy_{i}^{-1}}(a_{i})\|\\
&\leq&\|\xi-\sum_{i=0}^{n}\phi_{y_{i}}(a_{i})\|+\|\sum_{i=0}^{n}\alpha_{xy_{i}^{-1}}(a_{i})\|<\varepsilon/2+\varepsilon/2=\varepsilon.
\end{array}
\end{eqnarray*}
This implies that the net $\{\|\xi(x)\|\}_{x\in G}$ converges to $0$. So, $\ker \sigma$ is contained in the right
hand side of (\ref{ker sigma}). To see the other inclusion,
let $\xi$ be in the right hand side of (\ref{ker sigma}). So, for every $\varepsilon>0$, there exists $s\in G$ such that
$\|\xi(x)\|<\varepsilon/3$ for every $x\geq s$, and a finite sum $\sum_{i=0}^{n}\phi_{y_{i}}(a_{i})$ such that $\|\xi-\sum_{i=0}^{n}\phi_{y_{i}}(a_{i})\|<\varepsilon/3$. Now, if $z=y_{0}\vee y_{1}\vee...\vee y_{n}$, then we have
\begin{eqnarray}
\label{eq8}
\begin{array}{rcl}
\|\sigma(\xi)\|&=&\|\sigma(\xi-\sum_{i=0}^{n}\phi_{y_{i}}(a_{i}))+\sigma(\sum_{i=0}^{n}\phi_{y_{i}}(a_{i}))\|\\
&\leq&\|\sigma(\xi-\sum_{i=0}^{n}\phi_{y_{i}}(a_{i}))\|+\|\sigma(\sum_{i=0}^{n}\phi_{y_{i}}(a_{i}))\|\\
&\leq&\|\xi-\sum_{i=0}^{n}\phi_{y_{i}}(a_{i})\|+\|\sum_{i=0}^{n}\alpha^{y_{i}}(a_{i})\|\\
&<&\varepsilon/3+\|\sum_{i=0}^{n}\alpha^{y_{i}}(a_{i})\|.
\end{array}
\end{eqnarray}
Also, in the bottom line, by the same computation as (\ref{eq12}), we have $\|\sum_{i=0}^{n}\alpha^{y_{i}}(a_{i})\|=\|\sum_{i=0}^{n}\alpha_{xy_{i}^{-1}}(a_{i})\|$ for every $x\geq z$. So it follows that
$$\|\sigma(\xi)\|<\varepsilon/3+\|\sum_{i=0}^{n}\alpha_{xy_{i}^{-1}}(a_{i})\|\ \ \textrm{for all}\ x\geq z.$$
Moreover, for every $x\geq z$,
\begin{eqnarray*}
\begin{array}{rcl}
\|\sum_{i=0}^{n}\alpha_{xy_{i}^{-1}}(a_{i})\|&=&\|\sum_{i=0}^{n}\phi_{y_{i}}(a_{i})(x)-\xi(x)+\xi(x)\|\\
&=&\|(\sum_{i=0}^{n}\phi_{y_{i}}(a_{i}))(x)-\xi(x)+\xi(x)\|\\
&=&\|(\sum_{i=0}^{n}\phi_{y_{i}}(a_{i})-\xi)(x)+\xi(x)\|\\
&\leq&\|(\sum_{i=0}^{n}\phi_{y_{i}}(a_{i})-\xi)(x)\|+\|\xi(x)\|\\
&\leq&\|\sum_{i=0}^{n}\phi_{y_{i}}(a_{i})-\xi\|+\|\xi(x)\|<\varepsilon/3+\varepsilon/3=2\varepsilon/3.\\
\end{array}
\end{eqnarray*}
Therefore, it follows that, if $x\geq z$, then
$$\|\sigma(\xi)\|<\varepsilon/3+\|\sum_{i=0}^{n}\alpha_{xy_{i}^{-1}}(a_{i})\|\leq\varepsilon/3+2\varepsilon/3=\varepsilon.$$
This implies that $\|\sigma(\xi)\|<\varepsilon$ for every $\varepsilon>0$. So, we must have $\|\sigma(\xi)\|=0$, and
therefore, $\sigma(\xi)=0$. This completes the proof.
\end{proof}

\section{The Nica-Toeplitz algebra $\T_{\textrm{cov}}(A\times_{\alpha} P)$ as a full corner in crossed product by group}
\label{sec:full piso}
Consider the semigroup dynamical system $(A,P,\alpha)$ in which the action $\alpha$ of $P$ is given by extendible endomorphisms of the $C^*$-algebra $A$. Let $\B$ be the $C^*$-algebra constructed corresponding to the system $(A,P,\alpha)$ in section \ref{sec:alg B}.
There is an action $\beta$ of $G$ by automorphisms of $\B$ induced by the shift on $\ell^{\infty}(G,A)$,
such that $\beta_{t}\circ\phi_{s}=\phi_{ts}$ for all $s,t\in G$.
Thus we obtain a group dynamical system $(\B, G,\beta)$. Define a map $\rho:\B\rightarrow \L(\ell^{2}(G,A))$ by $(\rho(\xi)f)(x)=\xi(x)f(x)$, and $U:G\rightarrow \L(\ell^{2}(G,A))$ by $(U_{t}f)(x)=f(t^{-1}x)$, where $\xi\in\B$ and $f\in\ell^{2}(G,A)$. Then $\rho$
is a nondegenerate representation, and $U$ is a unitary representation such that we have $\rho(\beta_{t}(\xi))=U_{t}\rho(\xi)U_{t}^{*}$. Therefore the pair $(\rho,U)$ is a covariant representation of $(\B,G,\beta)$ on $\ell^{2}(G,A)$.
Moreover, let $(\B\rtimes_{\beta}G,j_{\B},j_{G})$ be the group crossed product associated to the system $(\B,G,\beta)$.
Since for each spanning element $\phi_{s}(a)-\phi_{t}(\alpha_{ts^{-1}}(a))$ of $\J$ and $r\in G$ we have
\begin{eqnarray*}
\begin{array}{rcl}
\beta_{r}\big(\phi_{s}(a)-\phi_{t}(\alpha_{ts^{-1}}(a))\big)&=&\beta_{r}\big(\phi_{s}(a)\big)-\beta_{r}\big(\phi_{t}(\alpha_{ts^{-1}}(a))\big)\\
&=&\phi_{rs}(a)-\phi_{rt}(\alpha_{ts^{-1}}(a))\\
&=&\phi_{rs}(a)-\phi_{rt}(\alpha_{(rt)(rs)^{-1}}(a))\in \J,
\end{array}
\end{eqnarray*}
$\J$ is actually a $\beta$-invariant essential ideal of $\B$. It thus follows that $\J\rtimes_{\beta} G$ sits
in $\B\rtimes_{\beta} G$ as an essential ideal \cite[Proposition 2.4]{Kusuda}. Now, we have:

\begin{theorem}
\label{main}
Suppose that $(A,P,\alpha)$ is a dynamical system consisting of a $C^{*}$-algebra $A$ and an action $\alpha$ of $P$ by extendible endomorphisms of $A$. Let $p=\overline{j_{\B}\circ\phi_{e}}(1)$, and
$$k_{A}:A\rightarrow p(\B\rtimes_{\beta} G)p\ \ \textrm{and}\ \ k_{P}:P\rightarrow \mathcal{M}(p(\B\rtimes_{\beta} G)p)$$
be the maps defined by $k_{A}(a)=(j_{\B}\circ\phi_{e})(a)$ and $k_{P}(x)=pj_{G}(x)^{*}p$ for all $a\in A$ and $x\in P$. Then
the triple $(p(\B\rtimes_{\beta} G)p,k_{A},k_{P})$ is a Nica-Toeplitz crossed product for $(A,P,\alpha)$, and hence
$(\T_{\textrm{cov}}(A\times_{\alpha} P),i_{A},i_{P})\simeq (p(\B\rtimes_{\beta} G)p,k_{A},k_{P})$.
Moreover, $\T_{\textrm{cov}}(A\times_{\alpha} P)$ is a full corner in $\B\rtimes_{\beta} G$.
\end{theorem}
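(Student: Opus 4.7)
The plan is to construct $(k_A,k_P)$ inside the corner $p(\B\rtimes_\beta G)p$ (whose multiplier identity is $p$), verify directly that it is a Nica--Toeplitz covariant representation of $(A,P,\alpha)$, invoke the universal property (Remark~\ref{Rem-cov3}) to obtain a nondegenerate homomorphism $\Psi$, and then establish surjectivity, injectivity, and fullness separately. Uniqueness of Nica--Toeplitz crossed products (Definition~\ref{NT-CP-df}) then packages everything into the desired isomorphism.

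To check the Nica--Toeplitz axioms, I would first note that $\overline{\phi}_e(1)\phi_e(a)=\phi_e(a)=\phi_e(a)\overline{\phi}_e(1)$ by (\ref{suprem2}), so $k_A$ lands in the corner, and Lemma~\ref{mux} gives $k_A(a_\lambda)\to p$ strictly, so $k_A$ is nondegenerate. The standard crossed-product relation $j_G(x)\overline{j_\B}(\xi)j_G(x)^*=\overline{j_\B}(\beta_x(\xi))$ gives $j_G(x)\,p\,j_G(x)^*=\overline{j_\B}(\phi_x(1))$, and (\ref{suprem3})--(\ref{suprem4}) collapse the sandwich products to $k_P(x)k_P(y)=k_P(xy)$ and $k_P(x)^*k_P(x)=\overline{j_\B}(\phi_x(\overline{\alpha}_x(1)))$, so $k_P$ is a partial-isometric representation. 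The covariance relations (\ref{cov3}) are verified by similar reductions, and the Nica covariance (\ref{cov4}) is precisely $\overline{\phi}_x(\overline{\alpha}_x(1))\overline{\phi}_y(\overline{\alpha}_y(1))=\overline{\phi}_{x\vee y}(\overline{\alpha}_{x\vee y}(1))$, a direct consequence of (\ref{suprem2}). Universality then yields $\Psi:\T_{\textrm{cov}}(A\times_\alpha P)\to p(\B\rtimes_\beta G)p$ with $\Psi\circ i_A=k_A$ and $\overline{\Psi}\circ i_P=k_P$.

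For surjectivity I would take a spanning element $p\,j_\B(\phi_s(a))j_G(t)\,p$ of the corner and, putting $v=e\vee s\vee t\in P$, use (\ref{suprem2}) repeatedly to bring it to $\overline{j_\B}\big(\phi_v(\overline{\alpha}_v(1)\alpha_{vs^{-1}}(a)\overline{\alpha}_{vt^{-1}}(1))\big)\,j_G(t)$; a parallel direct calculation shows $k_P(v)^*k_A(\alpha_{vs^{-1}}(a))k_P(vt^{-1})$ reaches the same element, so $\Psi$ maps $i_P(v)^*i_A(\alpha_{vs^{-1}}(a))i_P(vt^{-1})$ to it. For injectivity I would apply Fowler's criterion \cite[Theorem~9.3]{Fowler}: pick a faithful nondegenerate $\sigma:\B\rtimes_\beta G\to B(H)$, let $\Lambda$ be its restriction to $p(\B\rtimes_\beta G)p$ (nondegenerate on $\sigma(p)H$), and put $(\pi,V)=(\Lambda\circ k_A,\overline{\Lambda}\circ k_P)$. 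A pointwise analysis shows that $q_F:=\prod_{i=1}^n\big(\overline{\phi}_e(1)-\overline{\phi}_{x_i}(\overline{\alpha}_{x_i}(1))\big)\in\M(\B)$ satisfies $q_F(y)=\overline{\alpha}_y(1)$ when $y\in P$ with $y\not\ge x_i$ for all $i$, and $q_F(y)=0$ otherwise; in particular $q_F(e)=1_{\M(A)}$. Hence $\pi(a)\prod_i(1-V_{x_i}^*V_{x_i})=\overline{\sigma}(\overline{j_\B}(\phi_e(a)\,q_F))$, which evaluates at $y=e$ to $a$, so it vanishes only for $a=0$. Fowler then forces $\pi\times V=\Lambda\circ\Psi$ (by uniqueness in Definition~\ref{NT-CP-df}) to be faithful, whence $\Psi$ is injective.

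Fullness follows from $j_\B(\phi_s(a))=j_\B(\phi_s(a))\overline{j_\B}(\phi_s(1))=j_\B(\phi_s(a))\,j_G(s)\,p\,j_G(s)^*$, placing every generator of $j_\B(\B)$ in the closed two-sided ideal generated by $p$; multiplying on either side by $\{j_G(t):t\in G\}$ then produces all of $\B\rtimes_\beta G$. The main obstacle throughout is algebraic bookkeeping: distinguishing elements of $\B$ from multipliers such as $\overline{\phi}_x(\overline{\alpha}_x(1))$, tracking the $\vee$-combinatorics produced by (\ref{suprem2})--(\ref{suprem4}), and, in the surjectivity step, correctly matching the parameters of an arbitrary spanning element of the corner with the $(x,y,b)\in P\times P\times A$ that realize it as a value of $\Psi$.
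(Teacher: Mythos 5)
Your proposal is correct and follows essentially the same route as the paper: verify that $(k_A,k_P)$ is Nica--Toeplitz covariant via (\ref{suprem2})--(\ref{suprem4}), establish the spanning formula for the corner, apply Fowler's \cite[Theorem 9.3]{Fowler} with an evaluation-at-$e$ argument for injectivity, and use an approximate-identity/covariance manipulation for fullness. Your pointwise description of $q_F$ is just a cleaner packaging of the paper's expansion of $(\phi_{e}(a)-\phi_{x_{1}}(\alpha_{x_{1}}(a)))\prod_{i\geq 2}(\overline{\phi}_{e}(1)-\overline{\phi}_{x_{i}}(\overline{\alpha}_{x_{i}}(1)))$ into $\phi_{e}(a)+\sum_{k}\phi_{y_{k}}(a_{k})$ with $y_{k}>e$, so the two arguments coincide in substance.
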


\begin{proof}
First of all, since $j_{\B}$ and $\phi_{e}$ are injective, so is $k_{A}$. Let $\Lambda$ be a nondegenerate representation of
$p(\B\rtimes_{\beta} G)p$ on a Hilbert space $H$. We show that $(\pi,W):=(\Lambda\circ k_{A}, \overline{\Lambda}\circ k_{P})$ is a
Nica-Toeplitz covariant representation of $(A,P,\alpha)$ on $H$. Take an approximate identity $\{a_{\lambda}\}$ in $A$. Since $\phi_{e}(a_{\lambda})\rightarrow\overline{\phi}_{e}(1)$ strictly in $\M(\B)$, and $j_{\B}$ is nondegenerate, we get
$k_{A}(a_{\lambda})\rightarrow\overline{j_{\B}}(\overline{\phi}_{e}(1))$ strictly in $\mathcal{M}(\B\rtimes_{\beta} G)$,
where $\overline{j_{\B}}(\overline{\phi}_{e}(1))=\overline{j_{\B}\circ\phi_{e}}(1)=p$. Therefore, as $\Lambda$ is nondegenerate,
$\pi(a_{\lambda})=\Lambda(k_{A}(a_{\lambda}))$ converges strictly to $\overline{\Lambda}(p)=1$
strictly in $B(H)=\M(\K(H))$, which implies that $\pi$ is nondegenerate. Next, we show that $W:P\rightarrow B(H)$ is a
partial-isometric representation of $P$ on $H$ which satisfies the equation
$$W_{x}^{*}W_{x}W_{y}^{*}W_{y}=W_{x\vee y}^{*}W_{x\vee y}$$ for all $x,y\in P$. To see that each $W_{x}$ is a partial-isometry,
note that, by applying the covariance equation of the pair $(j_{\B},j_{G})$, we have
\begin{eqnarray}\label{eq10}
\begin{array}{rcl}
k_{P}(x)k_{P}(x)^{*}k_{P}(x)&=&pj_{G}(x)^{*}pj_{G}(x)\overline{j_{\B}}(\overline{\phi}_{e}(1))j_{G}(x)^{*}p\\
&=&pj_{G}(x)^{*}p\overline{j_{\B}}(\overline{\beta}_{x}(\overline{\phi}_{e}(1)))p\\
&=&pj_{G}(x)^{*}p\overline{j_{\B}}(\overline{\phi}_{x}(1))p\\
&=&pj_{G}(x)^{*}\overline{j_{\B}}(\overline{\phi}_{e}(1))\overline{j_{\B}}(\overline{\phi}_{x}(1))p\\
&=&pj_{G}(x)^{*}\overline{j_{\B}}(\overline{\phi}_{e}(1)\overline{\phi}_{x}(1))p\\
\end{array}
\end{eqnarray}
Then, in the bottom line, for $\overline{\phi}_{e}(1)\overline{\phi}_{x}(1)$, since $e\leq x$, by (\ref{suprem3}), we have
$$\overline{\phi}_{e}(1)\overline{\phi}_{x}(1)=\overline{\phi}_{x}(\overline{\alpha}_{x}(1)),$$ and therefore
$$k_{P}(x)k_{P}(x)^{*}k_{P}(x)=pj_{G}(x)^{*}\overline{j_{\B}}(\overline{\phi}_{x}(\overline{\alpha}_{x}(1)))p$$
Now, again, by the covariance equation of $(j_{\B},j_{G})$,
\begin{eqnarray}\label{eq11}
\begin{array}{rcl}
k_{P}(x)k_{P}(x)^{*}k_{P}(x)&=&p\overline{j_{\B}}(\overline{\phi}_{e}(\overline{\alpha}_{x}(1)))j_{G}(x)^{*}p\\
&=&q\overline{j_{\B}}(\overline{\phi}_{e}(1)\overline{\phi}_{x^{-1}}(1))j_{G}(x)^{*}p\ \ \ [\textrm{by (\ref{suprem4}),}\ \textrm{as}\ x^{-1}\leq e]\\
&=&p\overline{j_{\B}}(\overline{\phi}_{e}(1))\overline{j_{\B}}(\overline{\phi}_{x^{-1}}(1))j_{G}(x)^{*}p\\
&=&p\overline{j_{\B}}(\overline{\phi}_{x^{-1}}(1))j_{G}(x)^{*}p\\
&=&pj_{G}(x)^{*}\overline{j_{\B}}(\overline{\beta}_{x}(\overline{\phi}_{x^{-1}}(1)))p\\
&=&pj_{G}(x)^{*}\overline{j_{\B}}(\overline{\phi}_{e}(1))p=pj_{G}(x)^{*}p=k_{P}(x).\\
\end{array}
\end{eqnarray}
Therefore, it follows that
$$W_{x}W_{x}^{*}W_{x}=\overline{\Lambda}(k_{P}(x)k_{P}(x)^{*}k_{P}(x))=\overline{\Lambda}(k_{P}(x))=W_{x},$$
which means that each $W_{x}$ is a partial-isometry. Moreover,
\begin{eqnarray*}
\begin{array}{rcl}
k_{P}(x)k_{P}(y)&=&pj_{G}(x)^{*}\overline{j_{\B}}(\overline{\phi}_{e}(1))j_{G}(y)^{*}p\\
&=&pj_{G}(x)^{*}j_{G}(y)^{*}\overline{j_{\B}}(\overline{\beta}_{y}(\overline{\phi}_{e}(1)))p\\
&=&pj_{G}(x y)^{*}\overline{j_{\B}}(\overline{\phi}_{y}(1))p\\
&=&p\overline{j_{\B}}(\overline{\phi}_{e}(1))j_{G}(x y)^{*}\overline{j_{\B}}(\overline{\phi}_{y}(1))p\\
&=&pj_{G}(x y)^{*}\overline{j_{\B}}(\overline{\beta}_{x y}(\overline{\phi}_{e}(1)))\overline{j_{\B}}(\overline{\phi}_{y}(1))p\\
&=&pj_{G}(x y)^{*}\overline{j_{\B}}(\overline{\phi}_{x y}(1))\overline{j_{\B}}(\overline{\phi}_{y}(1))p\\
&=&pj_{G}(x y)^{*}\overline{j_{\B}}(\overline{\phi}_{x y}(1)\overline{\phi}_{y}(1))p\\
&=&pj_{G}(x y)^{*}\overline{j_{\B}}(\overline{\phi}_{x y}(\overline{\alpha}_{x}(1)))\overline{j_{\B}}(\overline{\phi}_{e}(1))p
\ \ \ [\textrm{by (\ref{suprem4}),}\ \textrm{as}\ y\leq xy]\\
&=&pj_{G}(x y)^{*}\overline{j_{\B}}(\overline{\phi}_{x y}(\overline{\alpha}_{x}(1))\overline{\phi}_{e}(1))p\\
&=&pj_{G}(x y)^{*}\overline{j_{\B}}(\overline{\phi}_{x y}(\overline{\alpha}_{x}(1)\overline{\alpha}_{x y}(1)))p
\ \ \ [\textrm{by (\ref{suprem4}),}\ \textrm{as}\ e\leq xy]\\
&=&pj_{G}(x y)^{*}\overline{j_{\B}}(\overline{\phi}_{x y}(\overline{\alpha}_{x y}(1)))p\\
&=&p\overline{j_{\B}}(\overline{\phi}_{e}(\overline{\alpha}_{x y}(1)))j_{G}(x y)^{*}p
\end{array}
\end{eqnarray*}
Now, for the bottom line, if we continue the computation similar to (\ref{eq11}) (see that $p\overline{j_{\B}}(\overline{\phi}_{e}(\overline{\alpha}_{x}(1)))j_{G}(x)^{*}p=k_{P}(x)$), then we get
$$k_{P}(x)k_{P}(y)=p\overline{j_{\B}}(\overline{\phi}_{e}(\overline{\alpha}_{x y}(1)))j_{G}(x y)^{*}p=k_{P}(xy).$$
Thus, $$W_{x}W_{y}=\overline{\Lambda}(k_{P}(x)k_{P}(y))=\overline{\Lambda}(k_{P}(xy))=W_{xy}.$$
To see that $W_{x}^{*}W_{x}W_{y}^{*}W_{y}=W_{x\vee y}^{*}W_{x\vee y}$, note that, by a similar computation done in (\ref{eq10}), we have
\begin{align}
\label{eq13}
k_{P}(x)^{*}k_{P}(x)=p\overline{j_{\B}}(\overline{\phi}_{x}(1))p=p\overline{j_{\B}}(\overline{\phi}_{x}(\overline{\alpha}_{x}(1)))p.
\end{align}
So, it follows that
\begin{eqnarray*}
\begin{array}{rcl}
k_{P}(x)^{*}k_{P}(x)k_{P}(y)^{*}k_{P}(y)&=&p\overline{j_{\B}}(\overline{\phi}_{x}(1))p\overline{j_{\B}}(\overline{\phi}_{y}(1))p\\
&=&p\overline{j_{\B}}(\overline{\phi}_{x}(1))\overline{j_{\B}}(\overline{\phi}_{e}(1))\overline{j_{\B}}(\overline{\phi}_{y}(1))p\\
&=&p\overline{j_{\B}}(\overline{\phi}_{x}(1)\overline{\phi}_{e}(1)\overline{\phi}_{y}(1))p\\
&=&p\overline{j_{\B}}(\overline{\phi}_{x}(1)\overline{\phi}_{y}(\overline{\alpha}_{y}(1))p
\ \ \ [\textrm{by (\ref{suprem3}),}\ \textrm{as}\ e\leq y]\\
&=&p\overline{j_{\B}}(\overline{\phi}_{x\vee y}(\overline{\alpha}_{(x\vee y)x^{-1}}(1)
\overline{\alpha}_{(x\vee y)}(1)))p
\ \ \ [\textrm{by (\ref{suprem2})}]\\
&=&p\overline{j_{\B}}(\overline{\phi}_{x\vee y}(\overline{\alpha}_{(x\vee y)}(1)))p
\ \ \ [\textrm{since}\ (x\vee y)x^{-1}\leq (x\vee y)]\\
&=&k_{P}(x\vee y)^{*}k_{P}(x\vee y).\ \ \ [\textrm{by (\ref{eq13})}]
\end{array}
\end{eqnarray*}
Therefore, we have
\begin{eqnarray*}
\begin{array}{rcl}
W_{x}^{*}W_{x}W_{y}^{*}W_{y}&=&\overline{\Lambda}(k_{P}(x)^{*}k_{P}(x)k_{P}(y)^{*}k_{P}(y))\\
&=&\overline{\Lambda}(k_{P}(x\vee y)^{*}k_{P}(x\vee y))\\
&=&\overline{\Lambda}(k_{P}(x\vee y))^{*}\overline{\Lambda}(k_{P}(x\vee y))=W_{x\vee y}^{*}W_{x\vee y}.
\end{array}
\end{eqnarray*}

Now, we show that the pair $(\pi,W)$ satisfies the covariance equations
$$\pi(\alpha_{x}(a))=W_{x}\pi_{A}(a)W_{x}^{*}\ \ \textrm{and}\ \ W_{x}^{*}W_{x}\pi(a)=\pi(a)W_{x}^{*}W_{x}$$ for every $a\in A$ and $x\in P$. We have

\begin{eqnarray*}
\begin{array}{rcl}
k_{P}(x)k_{A}(a)k_{P}(x)^{*}&=&pj_{G}(x)^{*}j_{\B}(\phi_{e}(a))j_{G}(x)p\\
&=&pj_{\B}(\beta_{x^{-1}}(\phi_{e}(a)))p\ \ [\textrm{by the covariance of}\ (j_{\B},j_{G})]\\
&=&pj_{\B}(\phi_{x^{-1}}(a))p\\
&=&p\overline{j_{\B}}(\overline{\phi}_{e}(1))j_{\B}(\phi_{x^{-1}}(a))p\\
&=&pj_{\B}(\overline{\phi}_{e}(1)\phi_{x^{-1}}(a))p\\
&=&pj_{\B}(\phi_{e}(\alpha_{x}(a)))p\ \ \ [\textrm{by (\ref{suprem4}),}\ \textrm{as}\ x^{-1}\leq e]\\
&=&(j_{\B}\circ\phi_{e})(\alpha_{x}(a))=k_{A}(\alpha_{x}(a)).\\
\end{array}
\end{eqnarray*}
Thus, it follows that $$\pi(\alpha_{x}(a))=\Lambda(k_{A}(\alpha_{x}(a)))=\Lambda(k_{P}(x)k_{A}(a)k_{P}(x)^{*})=W_{x}\pi(a)W_{x}^{*}.$$
To see that $W_{x}^{*}W_{x}\pi(a)=\pi(a)W_{x}^{*}W_{x}$, first, by using (\ref{eq13}), we have
\begin{eqnarray*}
\begin{array}{rcl}
k_{P}(x)^{*}k_{P}(x)k_{A}(a)&=&p\overline{j_{\B}}(\overline{\phi}_{x}(1))pj_{\B}(\phi_{e}(a))\\
&=&p\overline{j_{\B}}(\overline{\phi}_{x}(1))j_{\B}(\phi_{e}(a))p\\
&=&pj_{\B}(\overline{\phi}_{x}(1)\phi_{e}(a))p\\
&=&pj_{\B}(\phi_{x}(1\alpha_{x}(a)))p\ \ \ [\textrm{by (\ref{suprem4}),}\ \textrm{as}\ e\leq x]\\
&=&pj_{\B}(\phi_{x}(\alpha_{x}(a)1))p\\
&=&pj_{\B}(\phi_{e}(a)\overline{\phi}_{x}(1))p\\
&=&pj_{\B}(\phi_{e}(a))\overline{j_{\B}}(\overline{\phi}_{x}(1))p\\
&=&j_{\B}(\phi_{e}(a))p\overline{j_{\B}}(\overline{\phi}_{x}(1))p=k_{A}(a)k_{P}(x)^{*}k_{P}(x).
\end{array}
\end{eqnarray*}
Therefore, we get
\begin{eqnarray*}
\begin{array}{rcl}
W_{x}^{*}W_{x}\pi(a)&=&\Lambda(k_{P}(x)^{*}k_{P}(x)k_{A}(a))\\
&=&\Lambda(k_{A}(a)k_{P}(x)^{*}k_{P}(x))=\pi(a)W_{x}^{*}W_{x}.
\end{array}
\end{eqnarray*}
So, the pair $(\pi,W)$ is a Nica-Toeplitz covariant representation of $(A,P,\alpha)$ on $H$.

Next, we want to prove that
\begin{align}
\label{span}
p(\B\rtimes_{\beta} G)p=\clsp\{k_{P}(x)^{*}k_{A}(a)k_{P}(y): a\in A, x,y\in P\}.
\end{align}
We only need to show that $p(\B\rtimes_{\beta} G)p$ is a subset of the right hand side of (\ref{span}), as the other inclusion is obvious. So, recall that, since
elements of the form $j_{\B}(\phi_{r}(a))j_{G}(s)$ span $\B\rtimes_{\beta} G$, where $a\in A$ and $r,s\in G$, $p(\B\rtimes_{\beta} G)p$ is spanned by the elements $pj_{\B}(\phi_{r}(a))j_{G}(s)p$. However,
\begin{eqnarray*}
\begin{array}{rcl}
pj_{\B}(\phi_{r}(a))j_{G}(s)p&=&p\overline{j_{\B}}(\overline{\phi}_{e}(1))j_{\B}(\phi_{r}(a))j_{G}(s)p\\
&=&pj_{\B}(\overline{\phi}_{e}(1)\phi_{r}(a))j_{G}(s)p\\
&=&pj_{\B}(\phi_{(e\vee r)}(\overline{\alpha}_{(e\vee r)}(1)\alpha_{(e\vee r)r^{-1}}(a)))j_{G}(s)p,\ \ \ [\textrm{by (\ref{suprem2})}]
\end{array}
\end{eqnarray*}
where $(e\vee r)\in P$, as $e\leq e\vee r$. So, for the spanning elements $pj_{\B}(\phi_{r}(a))j_{G}(s)p$ of $p(\B\rtimes_{\beta} G)p$, we can assume that $r\in P$ without loss of generality. Furthermore,

\begin{eqnarray*}
\begin{array}{rcl}
pj_{\B}(\phi_{r}(a))j_{G}(s)p&=&pj_{G}(s)j_{\B}(\beta_{s^{-1}}(\phi_{r}(a)))p\\
&=&pj_{G}(s)j_{\B}(\phi_{s^{-1}r}(a))\overline{j_{\B}}(\overline{\phi}_{e}(1))p\\
&=&pj_{G}(s)j_{\B}(\phi_{s^{-1}r}(a)\overline{\phi}_{e}(1))p\\
&=&pj_{G}(s)j_{\B}(\phi_{(s^{-1}r)\vee e}(b))p\ \ \ \ [b:=\alpha_{(s^{-1}r\vee e)r^{-1}s}(a)\overline{\alpha}_{(s^{-1}r\vee e)}(1)]\\
&=&pj_{G}(s)j_{\B}((\beta_{(s^{-1}r)\vee e}\circ\phi_{e})(b))p\\
&=&pj_{G}(s)j_{G}((s^{-1}r)\vee e)(j_{\B}\circ\phi_{e})(b)j_{G}((s^{-1}r)\vee e)^{*}p\\
&=&pj_{G}(s(s^{-1}r\vee e))j_{\B}(\phi_{e}(b))j_{G}((s^{-1}r)\vee e)^{*}p\\
&=&pj_{G}(x)j_{\B}(\phi_{e}(b))j_{G}(y)^{*}p,\\
\end{array}
\end{eqnarray*}
where $x=s(s^{-1}r\vee e)$ and $y=(s^{-1}r)\vee e$. Then, $y\in P$, obviously, and since $s^{-1}r\leq s^{-1}r\vee e$, we have
$$e\leq r=s(s^{-1}r)\leq s(s^{-1}r\vee e)=x.$$ It follows that $e\leq x$, and hence $x\in P$, too. Therefore, we have
$$pj_{\B}(\phi_{r}(a))j_{G}(s)p=pj_{G}(x)j_{\B}(\phi_{e}(b))j_{G}(y)^{*}p=k_{P}(x)^{*}k_{A}(b)k_{P}(y)$$ for some $b\in A$ and $x,y\in P$. This implies that
$p(\B\rtimes_{\beta} G)p$ is a subset of the right hand side of (\ref{span}), and therefore, (\ref{span}) is indeed valid.

Now, suppose that $(\sigma,V)$ is a Nica-Toeplitz covariant representation of $(A,P,\alpha)$ on a Hilbert space $K$. We show that there is a nondegenerate representation $\Phi$ of $p(\B\rtimes_{\beta} G)p$ on $K$ such that
$\Phi\circ k_{A}=\sigma$ and $\overline{\Phi}\circ k_{P}=V$. To do so, first, we take a faithful and nondegenerate representation $\Delta$ of $p(\B\rtimes_{\beta} G)p$ on a Hilbert space $H$. Then, similar to the earlier argument,
one can see that the pair $(\pi,W):=(\Delta\circ k_{A},\overline{\Delta}\circ k_{P})$ is a Nica-Toeplitz covariant representation of $(A,P,\alpha)$ on $H$. Let $\pi\times W$ be the associated
nondegenerate representation (integrated form) of the Nica-Toeplitz algebra $(\T_{\textrm{cov}}(A\times_{\alpha} P),i_{A},i_{P})$ on $H$, such that $(\pi\times W)\circ i_{A}=\pi$ and
$\overline{(\pi\times W)}\circ i_{P}=W$. We claim that $\pi\times W$ is faithful. To prove our claim, we apply \cite[Theorem 9.3]{Fowler}. So, take any finite subset $F=\{x_{1}, x_{2}, ..., x_{n}\}$ of $P\backslash \{e\}$, and assume that
$$\pi(a)\prod_{i=1}^{n}(1-W_{x_{i}}^{*}W_{x_{i}})=0,$$ where $a\in A$. It follows that
$$0=\pi(a)\prod_{i=1}^{n}(1-W_{x_{i}}^{*}W_{x_{i}})=\Delta(k_{A}(a)\prod_{i=1}^{n}(p-k_{P}(x_{i})^{*}k_{P}(x_{i}))),$$ and since $\Delta$ is faithful, we must have
$$k_{A}(a)\prod_{i=1}^{n}(p-k_{P}(x_{i})^{*}k_{P}(x_{i}))=0.$$ However,

\begin{eqnarray*}
\begin{array}{l}
k_{A}(a)\prod_{i=1}^{n}(p-k_{P}(x_{i})^{*}k_{P}(x_{i}))\\
=j_{\B}(\phi_{e}(a))\prod_{i=1}^{n}[p-p\overline{j_{\B}}(\overline{\phi}_{x_{i}}(\overline{\alpha}_{x_{i}}(1)))p]\ \ \ [\textrm{by (\ref{eq13})}]\\
=j_{\B}(\phi_{e}(a))\prod_{i=1}^{n}[p-p\overline{j_{\B}}(\overline{\phi}_{x_{i}}(\overline{\alpha}_{x_{i}}(1)))]\\
=j_{\B}(\phi_{e}(a))[p-p\overline{j_{\B}}(\overline{\phi}_{x_{1}}(\overline{\alpha}_{x_{1}}(1)))]\prod_{i=2}^{n}[p-p\overline{j_{\B}}(\overline{\phi}_{x_{i}}(\overline{\alpha}_{x_{i}}(1)))]\\
=[j_{\B}(\phi_{e}(a))-j_{\B}(\phi_{e}(a))\overline{j_{\B}}(\overline{\phi}_{x_{1}}(\overline{\alpha}_{x_{1}}(1)))]\prod_{i=2}^{n}[p-p\overline{j_{\B}}(\overline{\phi}_{x_{i}}(\overline{\alpha}_{x_{i}}(1)))]\\
=[j_{\B}(\phi_{e}(a))-j_{\B}(\phi_{e}(a)\overline{\phi}_{x_{1}}(\overline{\alpha}_{x_{1}}(1)))]\prod_{i=2}^{n}[p-p\overline{j_{\B}}(\overline{\phi}_{x_{i}}(\overline{\alpha}_{x_{i}}(1)))]\\
=[j_{\B}(\phi_{e}(a))-j_{\B}(\phi_{x_{1}}(\alpha_{x_{1}}(a)))]\prod_{i=2}^{n}[p-p\overline{j_{\B}}(\overline{\phi}_{x_{i}}(\overline{\alpha}_{x_{i}}(1)))]\ \ \ [\textrm{by (\ref{suprem3}),}\ \textrm{as}\ e< x_{1}]\\
=j_{\B}(\phi_{e}(a)-\phi_{x_{1}}(\alpha_{x_{1}}(a)))\prod_{i=2}^{n}[p-p\overline{j_{\B}}(\overline{\phi}_{x_{i}}(\overline{\alpha}_{x_{i}}(1)))]\\
=j_{\B}(\phi_{e}(a)-\phi_{x_{1}}(\alpha_{x_{1}}(a)))\prod_{i=2}^{n}[p-\overline{j_{\B}}(\overline{\phi}_{x_{i}}(\overline{\alpha}_{x_{i}}(1)))]\\
=j_{\B}(\phi_{e}(a)-\phi_{x_{1}}(\alpha_{x_{1}}(a)))\prod_{i=2}^{n}[\overline{j_{\B}}(\overline{\phi}_{e}(1))-\overline{j_{\B}}(\overline{\phi}_{x_{i}}(\overline{\alpha}_{x_{i}}(1)))]\\
=j_{\B}(\phi_{e}(a)-\phi_{x_{1}}(\alpha_{x_{1}}(a)))\prod_{i=2}^{n}\overline{j_{\B}}(\overline{\phi}_{e}(1)-\overline{\phi}_{x_{i}}(\overline{\alpha}_{x_{i}}(1)))\\
=j_{\B}[(\phi_{e}(a)-\phi_{x_{1}}(\alpha_{x_{1}}(a)))\prod_{i=2}^{n}(\overline{\phi}_{e}(1)-\overline{\phi}_{x_{i}}(\overline{\alpha}_{x_{i}}(1)))].
\end{array}
\end{eqnarray*}
Hence,
\begin{align}
\label{inject}
j_{\B}[(\phi_{e}(a)-\phi_{x_{1}}(\alpha_{x_{1}}(a)))\prod_{i=2}^{n}(\overline{\phi}_{e}(1)-\overline{\phi}_{x_{i}}(\overline{\alpha}_{x_{i}}(1)))]=0.
\end{align}
Also, note that, by some calculation, it is not difficult to see that, in fact,
$$(\phi_{e}(a)-\phi_{x_{1}}(\alpha_{x_{1}}(a)))\prod_{i=2}^{n}(\overline{\phi}_{e}(1)-\overline{\phi}_{x_{i}}(\overline{\alpha}_{x_{i}}(1)))=\phi_{e}(a)+\sum_{k=1}^{m}\phi_{y_{k}}(a_{k}),$$
where $a_{k}\in A$ and $y_{k}\in P\backslash \{e\}$ for every $1\leq k\leq m$. Thus, it follows from (\ref{inject}) that
\begin{align}
\label{inject2}
j_{\B}(\phi_{e}(a)+\sum_{k=1}^{m}\phi_{y_{k}}(a_{k}))=0.
\end{align}
Now, if $\rho\times U$ is the nondegenerate representation of $\B\rtimes_{\beta}G$ corresponding to the covariant pair $(\rho,U)$ of
$(\B,G,\beta)$ in $\L(\ell^{2}(G,A))$ (see the beginning of the current section), then, by (\ref{inject2}), we get
$$\rho\times U[j_{\B}(\phi_{e}(a)+\sum_{k=1}^{m}\phi_{y_{k}}(a_{k}))]=\rho(\phi_{e}(a)+\sum_{k=1}^{m}\phi_{y_{k}}(a_{k}))=0$$
in $\L(\ell^{2}(G,A))$. It follows that, for $\varepsilon_{e}\otimes a^{*}\in\ell^{2}(G)\otimes A\simeq\ell^{2}(G,A)$, we must have $$\rho(\phi_{e}(a)+\sum_{k=1}^{m}\phi_{y_{k}}(a_{k}))(\varepsilon_{e}\otimes a^{*})=0.$$
But, since each $y_{k}$ satisfies $e<y_{k}$,
\begin{eqnarray*}
\begin{array}{rcl}
0&=&\rho(\phi_{e}(a)+\sum_{k=1}^{m}\phi_{y_{k}}(a_{k}))(\varepsilon_{e}\otimes a^{*})\\
&=&(\rho(\phi_{e}(a))+\sum_{k=1}^{m}\rho(\phi_{y_{k}}(a_{k})))(\varepsilon_{e}\otimes a^{*})\\
&=&\rho(\phi_{e}(a))(\varepsilon_{e}\otimes a^{*})+\sum_{k=1}^{m}\rho(\phi_{y_{k}}(a_{k}))(\varepsilon_{e}\otimes a^{*})\\
&=&\varepsilon_{e}\otimes aa^{*}+0=\varepsilon_{e}\otimes aa^{*}.
\end{array}
\end{eqnarray*}
Therefore, $\varepsilon_{e}\otimes aa^{*}=0$, and hence, $aa^{*}=0$, which implies that $a=0$. Thus, by \cite[Theorem 9.3]{Fowler}, $\pi\times W$ is faithful. Then, define a map
$$\Psi_{0}:\lsp\{i_{P}(x)^{*}i_{A}(a)i_{P}(y): a\in A, x,y\in P\}\rightarrow p(\B\rtimes_{\beta} G)p$$ by
$$\Psi_{0}(\sum i_{P}(x_{m})^{*}i_{A}(a_{m,n})i_{P}(y_{n}))=\sum k_{P}(x_{m})^{*}k_{A}(a_{m,n})k_{P}(y_{n}).$$ One can see that $\Psi_{0}$ is linear. Also, the following computation
\begin{eqnarray*}
\begin{array}{rcl}
\|\sum k_{P}(x_{m})^{*}k_{A}(a_{m,n})k_{P}(y_{n})\|&=&\|\Delta(\sum k_{P}(x_{m})^{*}k_{A}(a_{m,n})k_{P}(y_{n}))\|\\
&=&\|\sum W_{x_{m}}^{*}\pi(a_{m,n})W_{y_{n}}\|\\
&=&\|\pi\times W(\sum i_{P}(x_{m})^{*}i_{A}(a_{m,n})i_{P}(y_{n}))\|\\
&=&\|\sum i_{P}(x_{m})^{*}i_{A}(a_{m,n})i_{P}(y_{n})\|\ \ [\textrm{as}\ \pi\times W\ \textrm{is faithful}]
\end{array}
\end{eqnarray*}
shows that $\Psi_{0}$ preserves the norm. It therefore follows that it is a well-defined linear map on the dense subspace $\lsp\{i_{P}(x)^{*}i_{A}(a)i_{P}(y): a\in A, x,y\in P\}$ of $\T_{\textrm{cov}}(A\times_{\alpha} P)$. Hence, it
extends to a norm-preserving linear map $\Psi$ of $\T_{\textrm{cov}}(A\times_{\alpha} P)$ into $p(\B\rtimes_{\beta} G)p$. Then, it is not difficult to see that $\Psi$ preserves the involution, too. Moreover, one can calculate to verify that
we have
\begin{eqnarray*}
\begin{array}{l}
\Psi([i_{P}(x)^{*}i_{A}(a)i_{P}(y)][i_{P}(s)^{*}i_{A}(b)i_{P}(t)])\\
=\Psi(i_{P}(x(y\vee s)y^{-1})^{*}i_{A}(\alpha_{(y\vee s)y^{-1}}(a)\overline{\alpha}_{(y\vee s)}(1)\alpha_{(y\vee s)s^{-1}}(b))i_{P}((y\vee s)s^{-1}t))\\
=k_{P}(x(y\vee s)y^{-1})^{*}k_{A}(\alpha_{(y\vee s)y^{-1}}(a)\overline{\alpha}_{(y\vee s)}(1)\alpha_{(y\vee s)s^{-1}}(b))k_{P}((y\vee s)s^{-1}t)\\
=[k_{P}(x)^{*}k_{A}(a)k_{P}(y)][k_{P}(s)^{*}k_{A}(b)k_{P}(t)]=\Psi(i_{P}(x)^{*}i_{A}(a)i_{P}(y))\Psi(i_{P}(s)^{*}i_{A}(b)i_{P}(t))
\end{array}
\end{eqnarray*}
on the spanning elements of $\T_{\textrm{cov}}(A\times_{\alpha} P)$. So, this implies that $\Psi$ also preserves the multiplication, and thus,
it is indeed an injective $*$-homomorphism of $\T_{\textrm{cov}}(A\times_{\alpha} P)$ into $p(\B\rtimes_{\beta} G)p$. Moreover, since
$$k_{P}(x)^{*}k_{A}(a)k_{P}(y)=\Psi(i_{P}(x)^{*}i_{A}(a)i_{P}(y))\in \Psi(\T_{\textrm{cov}}(A\times_{\alpha} P)),$$ it follows by (\ref{span}) that $\Psi$ is also onto. Therefore, $\Psi$ is actually an isomorphism of
$\T_{\textrm{cov}}(A\times_{\alpha} P)$ onto $p(\B\rtimes_{\beta} G)p$. Finally, if $\sigma\times V$ is the associated nondegenerate representation (integrated form) of $\T_{\textrm{cov}}(A\times_{\alpha} P)$ on $K$ such that
$(\sigma\times V)\circ i_{A}=\sigma$ and $\overline{(\sigma\times V)}\circ i_{P}=V$, then $\Phi:=(\sigma\times V)\circ \Psi^{-1}$ is the desired nondegenerate representation of
$p(\B\rtimes_{\beta} G)p$ on $K$ which satisfies $\Phi\circ k_{A}=\sigma$ and $\overline{\Phi}\circ k_{P}=V$.

In order to see that $\T_{\textrm{cov}}(A\times_{\alpha} P)$ is a full corner in $\B\rtimes_{\beta} G$, we have to show that $(\B\rtimes_{\beta} G)p(\B\rtimes_{\beta} G)$ is dense in $\B\rtimes_{\beta} G$.
If $\{a_{\lambda}\}$ is an approximate identity in $A$, then for any spanning element $j_{\B}(\phi_{r}(a))j_{G}(s)$ of $\B\rtimes_{\beta} G$, where $a\in A$ and $r,s\in G$,
we have $j_{\B}(\phi_{r}(a_{\lambda}a))j_{G}(s)\rightarrow j_{\B}(\phi_{r}(a))j_{G}(s)$ in the norm topology of $\B\rtimes_{\beta} G$. Moreover,
\begin{eqnarray*}
\begin{array}{rcl}
j_{\B}(\phi_{r}(a_{\lambda}a))j_{G}(s)&=&j_{\B}((\beta_{r}\circ\phi_{e})(a_{\lambda}a))j_{G}(s)\\
&=&j_{G}(r)j_{\B}(\phi_{e}(a_{\lambda}a))j_{G}(r)^{*}j_{G}(s)\\
&=&j_{G}(r)j_{\B}(\phi_{e}(a_{\lambda})\overline{\phi}_{e}(1)\phi_{e}(a))j_{G}(r^{-1})j_{G}(s)\\
&=&j_{G}(r)j_{\B}(\phi_{e}(a_{\lambda}))\overline{j_{\B}}(\overline{\phi}_{e}(1))j_{\B}(\phi_{e}(a))j_{G}(r^{-1}s)\\
&=&[j_{G}(r)j_{\B}(\phi_{e}(a_{\lambda}))]p[j_{\B}(\phi_{e}(a))j_{G}(r^{-1}s)].\\
\end{array}
\end{eqnarray*}
As the bottom line belongs to $(\B\rtimes_{\beta} G)p(\B\rtimes_{\beta} G)$, so does $j_{\B}(\phi_{r}(a_{\lambda}a))j_{G}(s)$. It
therefore follows that
$j_{\B}(\phi_{r}(a))j_{G}(s)\in\overline{(\B\rtimes_{\beta} G)p(\B\rtimes_{\beta} G)}$, which implies that
$$\overline{(\B\rtimes_{\beta} G)p(\B\rtimes_{\beta} G)}=\B\rtimes_{\beta} G.$$
Thus, $\T_{\textrm{cov}}(A\times_{\alpha} P)$ is a full corner in $\B\rtimes_{\beta} G$. This completes the proof.
\end{proof}

\begin{lemma}
\label{ker}
The ideal
$$\I=\ker \Omega=\clsp\{i_{P}(x)^{*}i_{A}(a)(1-i_{P}(s)^{*}i_{P}(s))i_{P}(y): a\in A, x,y,s\in P\}$$
of $\T_{\textrm{cov}}(A\times_{\alpha} P)$ is isomorphic to $p(\J\rtimes_{\beta} G)p$, which is a
full corner in $\J\rtimes_{\beta} G$.
\end{lemma}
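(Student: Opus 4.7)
The plan is to transport $\I$ through the isomorphism $\Psi$ of Theorem \ref{main}, identify $\Psi(\I)$ with $p(\J \rtimes_{\beta} G)p$, and then inherit fullness of the corner from the ambient algebra $\B\rtimes_{\beta}G$. The computational heart of the argument is the identity
\begin{equation*}
k_A(a)\bigl(p - k_P(s)^* k_P(s)\bigr) = j_\B\bigl(\phi_e(a) - \phi_s(\alpha_s(a))\bigr),\qquad a\in A,\ s\in P,
\end{equation*}
which I obtain by substituting the definitions of $k_A$ and $k_P$, using (\ref{eq13}), and applying the multiplication rules (\ref{suprem3})--(\ref{suprem4}) (noting $e\leq s$). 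When $s>e$, the right-hand side is a canonical spanning element of $\J$.

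First, I establish $\Psi(\I)\subseteq p(\J\rtimes_\beta G)p$. Applying the identity above to a spanning element of $\I$ yields
\begin{equation*}
\Psi\bigl(i_P(x)^*i_A(a)(1-i_P(s)^*i_P(s))i_P(y)\bigr) = k_P(x)^*\, j_\B\bigl(\phi_e(a)-\phi_s(\alpha_s(a))\bigr)\, k_P(y),
\end{equation*}
which lies in $\J\rtimes_\beta G$ (the middle factor is in $j_\B(\J)$ for $s>e$, and the whole element vanishes for $s=e$); hence it belongs to $\J\rtimes_\beta G\cap p(\B\rtimes_\beta G)p=p(\J\rtimes_\beta G)p$.

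For the reverse inclusion I invoke the standard correspondence of ideals for full corners: as $p$ is full in $\B\rtimes_\beta G$ (Theorem \ref{main}), every ideal $I$ of $p(\B\rtimes_\beta G)p$ equals $pKp$ for a unique ideal $K=\overline{(\B\rtimes_\beta G)\,I\,(\B\rtimes_\beta G)}$ of $\B\rtimes_\beta G$. Let $K$ be the ideal corresponding to $\Psi(\I)$; the preceding step gives $K\subseteq\J\rtimes_\beta G$. For the reverse, using $\beta_u(\phi_e(a)-\phi_s(\alpha_s(a)))=\phi_u(a)-\phi_{us}(\alpha_s(a))$ together with the covariance $j_\B\circ\beta_u=j_G(u)\,j_\B(\,\cdot\,)\,j_G(u)^*$, every canonical spanning element of $\J$ satisfies
\begin{equation*}
j_\B\bigl(\phi_u(a)-\phi_{us}(\alpha_s(a))\bigr)=j_G(u)\,\bigl[k_A(a)(p-k_P(s)^*k_P(s))\bigr]\,j_G(u)^*
\end{equation*}
for $u\in G$, $s\in P\setminus\{e\}$, $a\in A$. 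The bracketed factor lies in $\Psi(\I)\subseteq K$; since $K$ is a closed two-sided ideal of $\B\rtimes_\beta G$ and therefore invariant under left and right multiplication by $\M(\B\rtimes_\beta G)$, this element lies in $K$. By linearity and continuity, $j_\B(\J)\subseteq K$, whence $\J\rtimes_\beta G\subseteq K$. Therefore $K=\J\rtimes_\beta G$ and $\Psi(\I)=p(\J\rtimes_\beta G)p$, providing the required isomorphism $\I\cong p(\J\rtimes_\beta G)p$.

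Finally, $p(\J\rtimes_\beta G)p$ is full in $\J\rtimes_\beta G$ by the standard inheritance of corner fullness by ideals: for $x\in\J\rtimes_\beta G$, fullness of $p$ in $\B\rtimes_\beta G$ supplies norm-approximations $x\approx\sum a_i p b_i$ with $a_i,b_i\in\B\rtimes_\beta G$, and sandwiching with an approximate unit $\{u_\lambda\}$ of $\J\rtimes_\beta G$ yields $x=\lim u_\lambda x u_\lambda\approx\sum(u_\lambda a_i)\,p\,(b_i u_\lambda)$ with $u_\lambda a_i,\,b_i u_\lambda\in\J\rtimes_\beta G$, so that $x\in\overline{(\J\rtimes_\beta G)\,p\,(\J\rtimes_\beta G)}$. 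The main obstacle is the reverse inclusion in the third paragraph: the factorisation of a spanning element of $j_\B(\J)$ as a multiplier-conjugate of something already in $\Psi(\I)$ is the key observation that makes the ideal-level identification go through, and relies on the $\beta$-equivariance built into the construction of $\J$. The other steps are essentially routine, though notation-heavy, and mirror the computations already carried out in the proof of Theorem \ref{main}.
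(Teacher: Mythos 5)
Your proof is correct. The forward inclusion $\Psi(\I)\subseteq p(\J\rtimes_{\beta}G)p$ rests on exactly the same identity $k_{A}(a)(p-k_{P}(s)^{*}k_{P}(s))=j_{\B}(\phi_{e}(a)-\phi_{s}(\alpha_{s}(a)))$ that the paper extracts from the proof of Theorem \ref{main}, but your treatment of the reverse inclusion and of fullness genuinely departs from the paper's. The paper proves $p(\J\rtimes_{\beta}G)p\subseteq\Psi(\I)$ by a direct and fairly long manipulation of each spanning element $p[j_{\B}(\phi_{r}(a)-\phi_{s}(\alpha_{sr^{-1}}(a)))j_{G}(z)]p$: it first reduces to $r,s\in P$ by absorbing $\overline{\phi}_{e}(1)$, then shifts by $\beta_{z^{-1}}$ and $\beta_{x}$ to land on $k_{P}(zx)^{*}k_{A}(b)(p-k_{P}(yx^{-1})^{*}k_{P}(yx^{-1}))k_{P}(x)$. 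You instead set $K=\overline{(\B\rtimes_{\beta}G)\,\Psi(\I)\,(\B\rtimes_{\beta}G)}$ and use $\Psi(\I)=pKp$; note that this identity $p\overline{BIB}p=I$ holds for \emph{any} ideal $I$ of a corner $pBp$ (since $p(BIB)p=(pBp)I(pBp)\subseteq I$ and $I\subseteq\overline{III}\subseteq\overline{BIB}$), so you do not even need the full Rieffel correspondence or the fullness of $p$ here --- it would be worth writing out these two lines or citing \cite{RW} rather than calling it ``standard.'' You then pin down $K=\J\rtimes_{\beta}G$ by the one-line observation that $j_{\B}(\phi_{u}(a)-\phi_{us}(\alpha_{s}(a)))=j_{G}(u)[k_{A}(a)(p-k_{P}(s)^{*}k_{P}(s))]j_{G}(u)^{*}$ lies in the ideal $K$, which replaces the paper's computation with a conceptually cleaner argument. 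Likewise, your fullness argument (approximate $x\in\J\rtimes_{\beta}G$ by $\sum a_{i}pb_{i}$ and sandwich with an approximate unit of $\J\rtimes_{\beta}G$) establishes the general fact that a full projection remains full in every ideal, whereas the paper again computes explicitly with spanning elements of $\J\rtimes_{\beta}G$ using (\ref{eq3}). Both routes are valid; yours is shorter and more reusable, while the paper's is self-contained and exhibits the concrete form of the corner elements.
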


\begin{proof}
We show that the isomorphism $\Psi$ in Theorem \ref{main} maps the ideal $\I$ onto $p(\J\rtimes_{\beta} G)p$. Firstly,
note that, as $\J\rtimes_{\beta} G$ sits in $\B\rtimes_{\beta} G$ as an essential ideal, $\B\rtimes_{\beta} G$
is embedded in $\M(\J\rtimes_{\beta} G)$ as a $C^{*}$-subalgebra as well as $\M(\B\rtimes_{\beta} G)$. Therefore,
$p\in \M(\J\rtimes_{\beta} G)$. Also recall that $\J\rtimes_{\beta} G$ is spanned by elements of the form
$j_{\B}(\phi_{r}(a)-\phi_{s}(\alpha_{sr^{-1}}(a)))j_{G}(z)$, where $a\in A$ and $r,s,z\in G$ such that $r<s$. Now, in order to see that $\Psi(\I)=p(\J\rtimes_{\beta} G)p$, let $a\in A$ and $x,y,t\in P$. We have
$$\Psi(i_{P}(x)^{*}i_{A}(a)(1-i_{P}(t)^{*}i_{P}(t))i_{P}(y))=k_{P}(x)^{*}k_{A}(a)(p-k_{P}(t)^{*}k_{P}(t))k_{P}(y).$$ Then,
by a similar calculation to the one that implies (\ref{inject}),
we have $$k_{A}(a)(p-k_{P}(t)^{*}k_{P}(t))=j_{\B}(\phi_{e}(a)-\phi_{t}(\alpha_{t}(a))),$$ and hence, we obtain
\begin{align}
\label{eq2}
k_{P}(x)^{*}k_{A}(a)(p-k_{P}(t)^{*}k_{P}(t))k_{P}(y)=p[j_{G}(x)j_{\B}(\phi_{e}(a)-\phi_{t}(\alpha_{t}(a)))j_{G}(y)^{*}]p
\end{align}
which belongs to $p(\J\rtimes_{\beta} G)p$. So it follows that $\Psi(\I)\subset p(\J\rtimes_{\beta} G)p$. For the other inclusion, we show that each spanning element
$p[j_{\B}(\phi_{r}(a)-\phi_{s}(\alpha_{sr^{-1}}(a)))j_{G}(z)]p$ of $p(\J\rtimes_{\beta} G)p$ belongs to $\Psi(\I)$. In order to do so, first, we can assume that $r,s\in P$ with $r<s$ without loss of generality.
This is due to the fact that
\begin{eqnarray*}
\begin{array}{l}
p[j_{\B}(\phi_{r}(a)-\phi_{s}(\alpha_{sr^{-1}}(a)))j_{G}(z)]p\\
=p[\overline{j_{\B}}(\overline{\phi}_{e}(1))j_{\B}(\phi_{r}(a)-\phi_{s}(\alpha_{sr^{-1}}(a)))j_{G}(z)]p\\
=p[j_{\B}(\overline{\phi}_{e}(1)\phi_{r}(a)-\overline{\phi}_{e}(1)\phi_{s}(\alpha_{sr^{-1}}(a)))j_{G}(z)]p,
\end{array}
\end{eqnarray*}
where
\begin{eqnarray*}
\begin{array}{l}
\overline{\phi}_{e}(1)\phi_{r}(a)-\overline{\phi}_{e}(1)\phi_{s}(\alpha_{sr^{-1}}(a))\\
=\phi_{e\vee r}(\overline{\alpha}_{e\vee r}(1)\alpha_{(e\vee r)r^{-1}}(a))-\phi_{e\vee s}(\overline{\alpha}_{e\vee s}(1)\alpha_{(e\vee s)s^{-1}}(\alpha_{sr^{-1}}(a)))\\
=\phi_{e\vee r}(\overline{\alpha}_{e\vee r}(1)\alpha_{(e\vee r)r^{-1}}(a))-\phi_{e\vee s}(\overline{\alpha}_{e\vee s}(1)\alpha_{(e\vee s)r^{-1}}(a)).
\end{array}
\end{eqnarray*}
Thus, since $r<s$, $e\leq e\vee r\leq e\vee s$, from which, for $\overline{\alpha}_{e\vee s}(1)\alpha_{(e\vee s)r^{-1}}(a)$, we have
$$\alpha_{(e\vee s)(e\vee r)^{-1}}(\overline{\alpha}_{e\vee r}(1)\alpha_{(e\vee r)r^{-1}}(a))=\overline{\alpha}_{e\vee s}(1)\alpha_{(e\vee s)r^{-1}}(a).$$
Consequently, we see that each spanning element $p[j_{\B}(\phi_{r}(a)-\phi_{s}(\alpha_{sr^{-1}}(a)))j_{G}(z)]p$ of $p(\J\rtimes_{\beta} G)p$ can actually be written in the form
$$p[j_{\B}(\phi_{x}(b)-\phi_{y}(\alpha_{yx^{-1}}(b)))j_{G}(z)]p$$ for some $b\in A$ and $x,y\in P$ with $x<y$. So, take any spanning element $p[j_{\B}(\phi_{r}(a)-\phi_{s}(\alpha_{sr^{-1}}(a)))j_{G}(z)]p$
of $p(\J\rtimes_{\beta} G)p$, where $a\in A$, $z\in G$, and $r,s\in P$ with $r<s$. It follows by the covariance equation of $(j_{\B},j_{G})$ that
\begin{eqnarray*}
\begin{array}{l}
p[j_{\B}(\phi_{r}(a)-\phi_{s}(\alpha_{sr^{-1}}(a)))j_{G}(z)]p\\
=p[j_{G}(z)(j_{\B}\circ\beta_{z^{-1}})(\phi_{r}(a)-\phi_{s}(\alpha_{sr^{-1}}(a)))]p\\
=p[j_{G}(z)j_{\B}(\phi_{z^{-1}r}(a)-\phi_{z^{-1}s}(\alpha_{sr^{-1}}(a)))\overline{j_{\B}}(\overline{\phi}_{e}(1))]p\\
=p[j_{G}(z)j_{\B}(\phi_{z^{-1}r}(a)\overline{\phi}_{e}(1)-\phi_{z^{-1}s}(\alpha_{sr^{-1}}(a))\overline{\phi}_{e}(1))]p,
\end{array}
\end{eqnarray*}
where
\begin{eqnarray*}
\begin{array}{l}
\phi_{z^{-1}r}(a)\overline{\phi}_{e}(1)-\phi_{z^{-1}s}(\alpha_{sr^{-1}}(a))\overline{\phi}_{e}(1)\\
=\phi_{(z^{-1}r)\vee e}(\alpha_{(z^{-1}r\vee e)r^{-1}z}(a)\overline{\alpha}_{(z^{-1}r\vee e)}(1))-\phi_{(z^{-1}s)\vee e}(\alpha_{(z^{-1}s\vee e)s^{-1}z}(\alpha_{sr^{-1}}(a))\overline{\alpha}_{(z^{-1}s\vee e)}(1))\\
=\phi_{(z^{-1}r)\vee e}(\alpha_{(z^{-1}r\vee e)r^{-1}z}(a)\overline{\alpha}_{(z^{-1}r\vee e)}(1))-\phi_{(z^{-1}s)\vee e}(\alpha_{(z^{-1}s\vee e)zr^{-1}}(a)\overline{\alpha}_{(z^{-1}s\vee e)}(1)).
\end{array}
\end{eqnarray*}
So, as $r<s$, $z^{-1}r<z^{-1}s$, and hence, $e\leq (z^{-1}r)\vee e\leq (z^{-1}s)\vee e$. Thus, for $\alpha_{(z^{-1}s\vee e)zr^{-1}}(a)\overline{\alpha}_{(z^{-1}s\vee e)}(1)$, we can write
\begin{eqnarray*}
\begin{array}{rcl}
\alpha_{(z^{-1}s\vee e)(z^{-1}r\vee e)^{-1}}(\alpha_{(z^{-1}r\vee e)r^{-1}z}(a)\overline{\alpha}_{(z^{-1}r\vee e)}(1))&=&\alpha_{(z^{-1}s\vee e)r^{-1}z}(a)\overline{\alpha}_{(z^{-1}s\vee e)}(1)\\
&=&\alpha_{(z^{-1}s\vee e)zr^{-1}}(a)\overline{\alpha}_{(z^{-1}s\vee e)}(1).
\end{array}
\end{eqnarray*}
Therefore, it follows that
$$p[j_{\B}(\phi_{r}(a)-\phi_{s}(\alpha_{sr^{-1}}(a)))j_{G}(z)]p=p[j_{G}(z)j_{\B}(\phi_{x}(b)-\phi_{y}(\alpha_{yx^{-1}}(b)))]p,$$ where
$x=(z^{-1}r)\vee e$, $y=(z^{-1}s)\vee e$, and $b=\alpha_{(z^{-1}r\vee e)r^{-1}z}(a)\overline{\alpha}_{(z^{-1}r\vee e)}(1)$. Then,
\begin{eqnarray*}
\begin{array}{l}
p[j_{G}(z)j_{\B}(\phi_{x}(b)-\phi_{y}(\alpha_{yx^{-1}}(b)))]p\\
=p[j_{G}(z)(j_{\B}\circ\beta_{x})(\phi_{e}(b)-\phi_{yx^{-1}}(\alpha_{yx^{-1}}(b)))]p\\
=p[j_{G}(z)j_{G}(x)j_{\B}(\phi_{e}(b)-\phi_{yx^{-1}}(\alpha_{yx^{-1}}(b)))j_{G}(x)^{*}]p\\
=p[j_{G}(zx)j_{\B}(\phi_{e}(b)-\phi_{yx^{-1}}(\alpha_{yx^{-1}}(b)))j_{G}(x)^{*}]p.
\end{array}
\end{eqnarray*}
Now, $x=(z^{-1}r)\vee e\in P$, clearly, and for $zx$, we have
$$e\leq r=z(z^{-1}r)\leq z((z^{-1}r)\vee e)=zx.$$ Thus, $zx\in P$, too, and therefore,
\begin{eqnarray*}
\begin{array}{l}
p[j_{G}(zx)j_{\B}(\phi_{e}(b)-\phi_{yx^{-1}}(\alpha_{yx^{-1}}(b)))j_{G}(x)^{*}]p\\
=k_{P}(zx)^{*}k_{A}(b)(p-k_{P}(yx^{-1})^{*}k_{P}(yx^{-1}))k_{P}(x)\ \ [\textrm{see}\ (\ref{eq2})]\\
=\Psi(i_{P}(zx)^{*}i_{A}(b)(1-i_{P}(yx^{-1})^{*}i_{P}(yx^{-1}))i_{P}(x))\in \Psi(\I).
\end{array}
\end{eqnarray*}
So, $p[j_{\B}(\phi_{r}(a)-\phi_{s}(\alpha_{sr^{-1}}(a)))j_{G}(z)]p$ belongs to $\Psi(\I)$ which implies that $p(\J\rtimes_{\beta} G)p\subset \Psi(\I)$. Therefore, we indeed have
$$\I\simeq \Psi(\I)=p(\J\rtimes_{\beta} G)p.$$

In order to show that $\I$ is a full corner in $\J\rtimes_{\beta} G$, we take an approximate identity in $\{a_{\lambda}\}$ in $A$. Then, for any spanning element spanned
$j_{\B}(\phi_{r}(a)-\phi_{s}(\alpha_{sr^{-1}}(a)))j_{G}(z)$ of $\J\rtimes_{\beta} G$, where $a\in A$ and $r,s,z\in G$ with $r<s$, we have
$$j_{\B}(\phi_{r}(aa_{\lambda})-\phi_{s}(\alpha_{sr^{-1}}(aa_{\lambda})))j_{G}(z)\rightarrow j_{\B}(\phi_{r}(a)-\phi_{s}(\alpha_{sr^{-1}}(a)))j_{G}(z)$$ in $\J\rtimes_{\beta} G$ with norm topology.
Now, for $j_{\B}(\phi_{r}(aa_{\lambda})-\phi_{s}(\alpha_{sr^{-1}}(aa_{\lambda})))j_{G}(z)$, by applying the covariance equation of $(j_{\B},j_{G})$, we have
\begin{eqnarray*}
\begin{array}{l}
j_{\B}(\phi_{r}(aa_{\lambda})-\phi_{s}(\alpha_{sr^{-1}}(aa_{\lambda})))j_{G}(z)\\
=(j_{\B}\circ\beta_{r})(\phi_{e}(aa_{\lambda})-\phi_{sr^{-1}}(\alpha_{sr^{-1}}(aa_{\lambda})))j_{G}(z)\\
=j_{G}(r)j_{\B}(\phi_{e}(aa_{\lambda})-\phi_{sr^{-1}}(\alpha_{sr^{-1}}(aa_{\lambda})))j_{G}(r)^{*}j_{\Gamma}(z)\\
=j_{G}(r)j_{\B}(\phi_{e}(aa_{\lambda})-\phi_{sr^{-1}}(\alpha_{sr^{-1}}(aa_{\lambda})))j_{G}(r^{-1})j_{\Gamma}(z)\\
=j_{G}(r)j_{\B}(\phi_{e}(aa_{\lambda})-\phi_{sr^{-1}}(\alpha_{sr^{-1}}(aa_{\lambda})))j_{G}(r^{-1}z).\\
\end{array}
\end{eqnarray*}
Then, in the bottom line, for $\phi_{e}(aa_{\lambda})-\phi_{sr^{-1}}(\alpha_{sr^{-1}}(aa_{\lambda}))$, it follows by the equation (\ref{eq3}) that
\begin{eqnarray*}
\begin{array}{l}
j_{G}(r)j_{\B}(\phi_{e}(aa_{\lambda})-\phi_{sr^{-1}}(\alpha_{sr^{-1}}(aa_{\lambda})))j_{G}(r^{-1}z)\\
=j_{G}(r)j_{\B}([\phi_{e}(a)-\phi_{sr^{-1}}(\alpha_{sr^{-1}}(a))][\phi_{e}(a_{\lambda})-\phi_{sr^{-1}}(\alpha_{sr^{-1}}(a_{\lambda}))])j_{G}(r^{-1}z)\\
=j_{G}(r)j_{\B}(\phi_{e}(a)-\phi_{sr^{-1}}(\alpha_{sr^{-1}}(a)))j_{\B}(\phi_{e}(a_{\lambda})-\phi_{sr^{-1}}(\alpha_{sr^{-1}}(a_{\lambda})))j_{G}(r^{-1}z)\\
=j_{G}(r)j_{\B}([\phi_{e}(a)-\phi_{sr^{-1}}(\alpha_{sr^{-1}}(a))]\overline{\phi}_{e}(1))
j_{\B}(\overline{\phi}_{e}(1)[\phi_{e}(a_{\lambda})-\phi_{sr^{-1}}(\alpha_{sr^{-1}}(a_{\lambda}))])j_{G}(r^{-1}z)\\
=j_{G}(r)j_{\B}(\phi_{e}(a)-\phi_{sr^{-1}}(\alpha_{sr^{-1}}(a)))\overline{j_{\B}}(\overline{\phi}_{e}(1))j_{\B}(\phi_{e}(a_{\lambda})-\phi_{sr^{-1}}(\alpha_{sr^{-1}}(a_{\lambda})))j_{G}(r^{-1}z)\\
=[j_{G}(r)j_{\B}(\phi_{e}(a)-\phi_{sr^{-1}}(\alpha_{sr^{-1}}(a)))]p[j_{\B}(\phi_{e}(a_{\lambda})-\phi_{sr^{-1}}(\alpha_{sr^{-1}}(a_{\lambda})))j_{G}(r^{-1}z)],
\end{array}
\end{eqnarray*}
which is an element of $(\J\rtimes_{\beta} G)p(\J\rtimes_{\beta} G)$. So, it follows that $$j_{\B}(\phi_{r}(a)-\phi_{s}(\alpha_{sr^{-1}}(a)))j_{G}(z)\in\overline{(\J\rtimes_{\beta} G)p(\J\rtimes_{\beta} G)},$$ and hence
$\overline{(\J\rtimes_{\beta} G)p(\J\rtimes_{\beta} G)}=\J\rtimes_{\beta} G$. Therefore, $\I\simeq p(\J\rtimes_{\beta} G)p$ is a full corner in $\J\rtimes_{\beta} G$.
\end{proof}

\begin{prop}
\label{essen}
The ideal $\I$ is an essential ideal of the Nica-Toeplitz algebra $\T_{\textrm{cov}}(A\times_{\alpha} P)$.
\end{prop}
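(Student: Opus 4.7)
The plan is to reduce the essentiality of $\I$ inside $\T_{\textrm{cov}}(A\times_{\alpha} P)$ to the already-established essentiality of $\J\rtimes_{\beta}G$ inside $\B\rtimes_{\beta}G$, using the identifications obtained in Theorem \ref{main} and Lemma \ref{ker}. Under the isomorphism $\Psi$ of Theorem \ref{main} we have $\Psi(\T_{\textrm{cov}}(A\times_{\alpha} P))=p(\B\rtimes_{\beta}G)p$ and, by Lemma \ref{ker}, $\Psi(\I)=p(\J\rtimes_{\beta}G)p$. Since $\J$ is an essential $\beta$-invariant ideal of $\B$ (Proposition \ref{ideal J}), the text already notes that $\J\rtimes_{\beta}G$ is essential in $\B\rtimes_{\beta}G$ via \cite[Proposition 2.4]{Kusuda}. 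So what remains is to show that essentiality descends to the full corner cut by $p$.

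The key lemma I would isolate is the following purely algebraic fact: if $K$ is an essential ideal of a $C^{*}$-algebra $D$ and $q\in \M(D)$ is a projection such that $qDq$ is a full corner of $D$, then $qKq$ is an essential ideal of $qDq$. To prove it, suppose $a\in qDq$ satisfies $a\cdot qKq=0$. Then for every $k\in K$ and $d\in D$, the element $kd$ lies in $K$, so $a\cdot q(kd)q=0$. Since $qDq$ is full, $\overline{DqD}=D$, and approximating a bounded approximate identity of $D$ by sums of the form $\sum d_{i}qd_{i}'$ and using $aq=a$, one deduces that $a\cdot k\cdot D=0$ for every $k\in K$. Essentiality of $K$ in $D$ then gives $a\cdot K=0$, and a second application gives $a=0$.

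Applying this with $D=\B\rtimes_{\beta}G$, $K=\J\rtimes_{\beta}G$, and $q=p=\overline{j_{\B}\circ\phi_{e}}(1)$ (recalling from Theorem \ref{main} that $p(\B\rtimes_{\beta}G)p$ is a full corner in $\B\rtimes_{\beta}G$), we conclude that $p(\J\rtimes_{\beta}G)p$ is an essential ideal of $p(\B\rtimes_{\beta}G)p$. Transporting back along $\Psi^{-1}$ yields that $\I$ is an essential ideal of $\T_{\textrm{cov}}(A\times_{\alpha} P)$.

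The main obstacle is the corner descent step in the second paragraph; the argument is short but must use both pieces of data at once, namely fullness of the corner (to spread the annihilation condition $a\cdot qKq=0$ from within the corner out to all of $K$) and essentiality of $K$ in $D$ (applied twice, once to upgrade $a\cdot K\cdot D=0$ to $a\cdot K=0$ and once to conclude $a=0$). Everything else is bookkeeping via the isomorphism $\Psi$ and the identification $\Psi(\I)=p(\J\rtimes_{\beta}G)p$ already obtained.
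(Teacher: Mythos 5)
Your proof is correct and follows essentially the same route as the paper, whose one-line justification is precisely that $\I$ and $\T_{\textrm{cov}}(A\times_{\alpha} P)$ are full corners in $\J\rtimes_{\beta}G$ and $\B\rtimes_{\beta}G$ and that $\J\rtimes_{\beta}G$ is essential in $\B\rtimes_{\beta}G$; you have merely made explicit the corner-descent lemma that the paper leaves implicit. (One cosmetic point: passing from $a\cdot K\cdot D=0$ to $a\cdot K=0$ uses only that $D$ is a $C^*$-algebra, not essentiality of $K$; essentiality is genuinely needed only once, in the final step $a\cdot K=0\Rightarrow a=0$.)
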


\begin{proof}
This is due to the facts that the ideal $\I$ and $\T_{\textrm{cov}}(A\times_{\alpha} P)$ are full corners in $\J\rtimes_{\beta} G$ and $\B\rtimes_{\beta} G$, respectively, and $\J\rtimes_{\beta} G$ is an essential ideal of $\B\rtimes_{\beta} G$.
\end{proof}

\begin{remark}
\label{(Z,N)}
We would like to mention that when $(G,P)$ is an abelian totally ordered group we completely fall into the context of \cite{SZ}, as $\T_{\textrm{cov}}(A\times_{\alpha} P)\simeq A\times_{\alpha}^{\piso} P$ (see \cite{LR}).
\end{remark}

\section{The Nica-Toeplitz algebra $\T_{\textrm{cov}}(A\times_{\alpha} P)$ of dynamical systems by semigroups of automorphisms}
\label{sec:auto}
Let $(G,P)$ be an abelian lattice-ordered group. In this section we suppose that $(A,P,\alpha)$ is a system consisting
of a $C^{*}$-algebra $A$ and an action $\alpha:P\rightarrow \Aut(A)$ of $P$ by automorphisms of $A$. First, we recall that
the automorphic action $\alpha$ of $P$ can be extended (uniquely) to an action of the group $G$ on $A$ by using the fact
that $G=P^{-1}P$. More precisely, for every $s\in G$, $s\vee e\in P$, and since $(s\vee e)s^{-1}\in P$, $s(s\vee e)^{-1}\in P^{-1}$. Therefore,
$$s=(s(s\vee e)^{-1})(s\vee e)=(s\wedge e)(s\vee e)\in P^{-1}P.$$
Now, for every $t\in G$ with $t=r^{-1}s$, where $r,s \in P$, if we define $\alpha_{t}:=\alpha_{r}^{-1}\alpha_{s}$, then we obtain an
action of $G$ on $A$ by automorphisms, which is the extension of $\alpha$ to $G$.
Hence we get a dynamical system $(A,G,\alpha)$. We actually have the following lemma.

\begin{lemma}
\label{alpha extension}
Suppose that $(A,P,\alpha)$ is a system consisting of a $C^{*}$-algebra $A$ and an action $\alpha:P\rightarrow \Aut(A)$ of $P$ by automorphisms of $A$. Then, the action $\alpha$ extends uniquely to an action of $G$ on $A$ by automorphisms.
\end{lemma}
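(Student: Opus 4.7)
The plan is to mimic the standard construction of a group action from a cancellative abelian semigroup action, using the decomposition $G = P^{-1}P$ that was observed just before the statement.

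First I would fix, for each $t\in G$, a decomposition $t=r^{-1}s$ with $r,s\in P$ (for instance the canonical one $r=(t\wedge e)^{-1}$, $s=t\vee e$) and set $\alpha_t:=\alpha_r^{-1}\alpha_s\in\Aut(A)$; this makes sense because each $\alpha_p$ with $p\in P$ is assumed to be an automorphism. The first thing to verify is that this assignment does not depend on the chosen decomposition. Suppose $r^{-1}s=r'^{-1}s'$ with $r,s,r',s'\in P$. Multiplying through by $rr'$ and using that $G$ is abelian, one obtains $r's=rs'$ in $P$, hence $\alpha_{r'}\alpha_s=\alpha_{r's}=\alpha_{rs'}=\alpha_r\alpha_{s'}$. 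Since $\alpha_r$ and $\alpha_{r'}$ are invertible, rearranging gives $\alpha_r^{-1}\alpha_s=\alpha_{r'}^{-1}\alpha_{s'}$, so $\alpha_t$ is well-defined.

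Next I would show $t\mapsto\alpha_t$ is a group homomorphism $G\to\Aut(A)$. For $t_i=r_i^{-1}s_i$ ($i=1,2$), commutativity of $G$ gives $t_1t_2=(r_1r_2)^{-1}(s_1s_2)$, so by definition $\alpha_{t_1t_2}=\alpha_{r_1r_2}^{-1}\alpha_{s_1s_2}=\alpha_{r_2}^{-1}\alpha_{r_1}^{-1}\alpha_{s_1}\alpha_{s_2}$. To rewrite this as $\alpha_{t_1}\alpha_{t_2}=\alpha_{r_1}^{-1}\alpha_{s_1}\alpha_{r_2}^{-1}\alpha_{s_2}$, one needs $\alpha_{r_2}^{-1}$ to commute with $\alpha_{s_1}$, which follows because the original semigroup action is abelian: $\alpha_{r_2}\alpha_{s_1}=\alpha_{r_2 s_1}=\alpha_{s_1 r_2}=\alpha_{s_1}\alpha_{r_2}$, and inverting on one side yields $\alpha_{r_2}^{-1}\alpha_{s_1}=\alpha_{s_1}\alpha_{r_2}^{-1}$. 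Note also that $\alpha_e=\alpha_e^{-1}\alpha_e=\id$, and for $p\in P$ the formula specializes back to the original $\alpha_p$ (take $r=e$, $s=p$), so we indeed have an extension.

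Finally, for uniqueness, any action $\tilde\alpha:G\to\Aut(A)$ extending $\alpha$ must satisfy $\tilde\alpha_{r^{-1}}\tilde\alpha_r=\tilde\alpha_e=\id$, so $\tilde\alpha_{r^{-1}}=\alpha_r^{-1}$ for $r\in P$; consequently $\tilde\alpha_{r^{-1}s}=\tilde\alpha_{r^{-1}}\tilde\alpha_s=\alpha_r^{-1}\alpha_s$, which coincides with the construction above. The only place requiring care is the well-definedness check, which is the main (and essentially only) obstacle, and it is handled cleanly using the commutativity of $G$ together with the cancellation property provided by $\alpha_r\in\Aut(A)$.
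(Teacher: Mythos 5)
Your proof is correct. The paper itself gives no argument for this lemma: it simply refers the reader to Theorem 1.2 of \cite{Marcelo}, Laca's dilation theorem, which realizes a semigroup of injective endomorphisms as acting on a corner of a direct limit carrying a genuine group action; when the endomorphisms are already automorphisms the direct limit collapses to $A$ and one recovers exactly the elementary extension you construct. Your direct route --- define $\alpha_{r^{-1}s}:=\alpha_{r}^{-1}\alpha_{s}$, check independence of the decomposition via $r's=rs'$, verify multiplicativity, and deduce uniqueness from $\tilde{\alpha}_{r^{-1}}=\tilde{\alpha}_{r}^{-1}$ --- is the standard self-contained argument, and it is precisely what the formula $\alpha_{t}:=\alpha_{r}^{-1}\alpha_{s}$ displayed in the paragraph preceding the lemma presupposes; it is more elementary than invoking the dilation machinery. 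One small point worth making explicit: in the well-definedness step, passing from $\alpha_{r'}\alpha_{s}=\alpha_{r}\alpha_{s'}$ to $\alpha_{r}^{-1}\alpha_{s}=\alpha_{r'}^{-1}\alpha_{s'}$ uses not only invertibility but also that $\alpha_{r}$ and $\alpha_{r'}$ commute (which holds since $\alpha_{r}\alpha_{r'}=\alpha_{rr'}=\alpha_{r'r}=\alpha_{r'}\alpha_{r}$) --- the same commutation trick you spell out later for the homomorphism property, so this is a gloss rather than a gap. Note finally that the lattice structure plays no real role here; only $G=P^{-1}P$ and commutativity are needed, with your canonical choice $r=(t\wedge e)^{-1}$, $s=t\vee e$ serving merely as a convenient witness to such a decomposition.
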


\begin{proof}
We skip the proof and readers are referred to \cite[Theorem 1.2]{Marcelo} for more details.
\end{proof}

Now let $B_{G}$ be the $C^{*}$-subalgebra of $\ell^{\infty}(G)$ generated by the characteristic functions $\{1_{s}\in\ell^{\infty}(G):s\in G\}$, such that
\[
1_{s}(t)=
   \begin{cases}
      1 &\textrm{if}\empty\ \text{$s\leq t$}\\
      0 &\textrm{otherwise}.
   \end{cases}
\]
It can easily be seen that $1_{s}1_{t}=1_{s\vee t}$, and $1_{s}^{*}=1_{s}$ for every $s,t\in G$. Thus, we have $$B_{G}=\clsp\{1_{s}: s\in G\}.$$
Let $\tau:G\rightarrow \Aut(B_{G})$ be the action of $G$ on $B_{G}$ by automorphisms given by the translation, such that $\tau_{t}(1_{s})=1_{ts}$ for all $s,t\in G$. Hence, we have the dynamical system $(B_{G},G,\tau)$.
Let $B_{G,\infty}$ denote the $C^{*}$-subalgebra of $B_{G}$ generated by the elements $\{1_{s}-1_{t}: s<t\in G\}$. It is not difficult to see that
$$B_{G,\infty}=\clsp\{1_{s}-1_{t}: s<t\in G\},$$ which is an essential ideal of $B_{G}$. It is $\tau$-invariant, too. Moreover, $s\mapsto \tau_{s}\otimes \alpha_{s}^{-1}$
defines an action of $G$ on the algebra $B_{G}\otimes A$ by automorphisms (note that as the algebra $B_{G}$ is abelian, $B_{G}\otimes A=B_{G}\otimes_{\textrm{min}} A=B_{G}\otimes_{\textrm{max}} A$).
Therefore, we obtain a dynamical system $(B_{G}\otimes A, G, \tau\otimes \alpha^{-1})$. Also, $B_{G,\infty}\otimes A$ is a
$(\tau\otimes \alpha^{-1})$-invariant ideal of $B_{G}\otimes A$. Next, we see that the algebra $\B$ and its ideal $\J$ associated with the system $(A,P,\alpha)$ can be identified with tensor product algebras.

\begin{prop}
\label{B-auto}
There is an isomorphism $\mu:B_{G}\otimes A\rightarrow \B$ such that $\beta_{t}(\mu(\xi))=\mu((\tau\otimes\alpha^{-1})_{t}(\xi))$ for all $\xi\in(B_{G}\otimes A)$ and $t\in G$,
and it maps the ideal $B_{G,\infty}\otimes A$ onto $\J$. Moreover, $\mu$ induces an isomorphism $\Gamma$ of $((B_{G}\otimes A)\rtimes_{\tau\otimes\alpha^{-1}} G,i)$ onto $(\B\rtimes_{\beta} G,j)$ such that
\begin{align}
\label{eq5}
\Gamma(i_{B_{G}\otimes A}(\xi)i_{G}(s))=j_{\B}(\mu(\xi))j_{G}(s)\ \ \textrm{for all}\ \xi\in(B_{G}\otimes A), s\in G,
\end{align}
and it maps the ideal $(B_{G,\infty}\otimes A)\rtimes_{\tau\otimes\alpha^{-1}} G$ onto $\J\rtimes_{\beta} G$.
\end{prop}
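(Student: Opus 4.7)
The approach is to define $\mu$ on simple tensors by $\mu(f\otimes a)(x)=f(x)\alpha_{x}(a)$, which makes sense because Lemma \ref{alpha extension} extends $\alpha$ to an automorphism action of $G$ on $A$. The key observation is that the map $\Theta:\ell^{\infty}(G,A)\to\ell^{\infty}(G,A)$ given pointwise by $(\Theta\eta)(x)=\alpha_{x}(\eta(x))$ is a $*$-automorphism (with inverse $(\Theta^{-1}\eta)(x)=\alpha_{x}^{-1}(\eta(x))$). Then $\mu$ factors as
$$B_{G}\otimes A\hookrightarrow \ell^{\infty}(G,A)\stackrel{\Theta}{\longrightarrow}\ell^{\infty}(G,A),$$
where the first arrow is the canonical embedding $f\otimes a\mapsto(x\mapsto f(x)a)$. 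This canonical embedding is injective because $B_{G}$ is commutative (hence nuclear) and sits faithfully inside $\ell^{\infty}(G)$; composed with the isomorphism $\Theta$, the map $\mu$ is an injective $*$-homomorphism. Evaluating on generators, $\mu(1_{s}\otimes a)(x)=\alpha_{x}(a)$ for $s\leq x$ and $0$ otherwise, which coincides with $\phi_{s}(\alpha_{s}(a))(x)=\alpha_{xs^{-1}}(\alpha_{s}(a))$. Hence $\mu(1_{s}\otimes a)=\phi_{s}(\alpha_{s}(a))\in\B$, and since every generator satisfies $\phi_{s}(b)=\mu(1_{s}\otimes\alpha_{s}^{-1}(b))$, $\mu$ surjects onto $\B$.

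Equivariance is a direct check on generators: on the one hand $\beta_{t}(\mu(1_{s}\otimes a))=\beta_{t}(\phi_{s}(\alpha_{s}(a)))=\phi_{ts}(\alpha_{s}(a))$, and on the other $\mu((\tau_{t}\otimes\alpha_{t}^{-1})(1_{s}\otimes a))=\mu(1_{ts}\otimes\alpha_{t}^{-1}(a))=\phi_{ts}(\alpha_{ts}(\alpha_{t}^{-1}(a)))=\phi_{ts}(\alpha_{s}(a))$. For the ideal correspondence, setting $c=\alpha_{s}(b)$ we obtain
$$\mu((1_{s}-1_{t})\otimes b)=\phi_{s}(\alpha_{s}(b))-\phi_{t}(\alpha_{t}(b))=\phi_{s}(c)-\phi_{t}(\alpha_{ts^{-1}}(c))$$
whenever $s<t$, which is a spanning element of $\J$; conversely any such spanning element is $\mu((1_{s}-1_{t})\otimes\alpha_{s}^{-1}(c))$. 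Together with (\ref{J-span}) this proves $\mu(B_{G,\infty}\otimes A)=\J$.

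Finally, because $\mu$ is a $G$-equivariant $*$-isomorphism between the dynamical systems $(B_{G}\otimes A,G,\tau\otimes\alpha^{-1})$ and $(\B,G,\beta)$, the functoriality of group crossed products yields an induced $*$-isomorphism $\Gamma$ of the associated crossed products satisfying (\ref{eq5}) on the canonical spanning elements $i_{B_{G}\otimes A}(\xi)i_{G}(s)$. Since $\mu$ restricts to an isomorphism of the $(\tau\otimes\alpha^{-1})$-invariant ideal $B_{G,\infty}\otimes A$ onto the $\beta$-invariant ideal $\J$, the induced $\Gamma$ restricts to an isomorphism $(B_{G,\infty}\otimes A)\rtimes_{\tau\otimes\alpha^{-1}}G\cong \J\rtimes_{\beta}G$. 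The main obstacle I anticipate is rigorously justifying that the canonical embedding $B_{G}\otimes A\hookrightarrow\ell^{\infty}(G,A)$ extends to and remains injective on the full $C^{*}$-tensor product, which is where the nuclearity of the commutative algebra $B_{G}$ enters; once this is handled, the remaining steps are routine computations on generators.
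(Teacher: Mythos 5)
Your proposal is correct, and its overall architecture coincides with the paper's: the same formula $\mu(f\otimes a)(x)=f(x)\alpha_x(a)$, the same generator computations $\mu(1_s\otimes a)=\phi_s(\alpha_s(a))$ and $\phi_s(b)=\mu(1_s\otimes\alpha_{s^{-1}}(b))$ for surjectivity, the same checks of equivariance and of the correspondence $B_{G,\infty}\otimes A\leftrightarrow\J$ on spanning elements, and the same appeal to functoriality of group crossed products (the paper cites \cite[Lemma 2.65]{W}) for $\Gamma$ and its restriction to the invariant ideals. Where you genuinely diverge is the one nontrivial analytic point, namely that $\mu$ is a well-defined injection on the $C^*$-tensor product. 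The paper first builds $\mu$ from the commuting-range homomorphisms $\varphi(f)(s)=f(s)1_{\M(A)}$ and $\psi(a)(s)=\alpha_s(a)$ into $\M(\ell^{\infty}(G,A))$, and then proves injectivity by exhibiting faithful representations $M\otimes\pi$ of $B_G\otimes A$ on $\ell^2(G)\otimes H$ and $\tilde\pi$ of $\B$ on $\ell^2(G,H)$ (the latter twisted by $\alpha_{s^{-1}}$) together with a unitary $U$ intertwining them. Your factorization $\mu=\Theta\circ\iota$, with $\Theta$ the pointwise automorphism $(\Theta\eta)(x)=\alpha_x(\eta(x))$ of $\ell^{\infty}(G,A)$, is a cleaner way to say the same thing: the twist by $\alpha_{s^{-1}}$ in the paper's $\tilde\pi$ is exactly your $\Theta^{-1}$ implemented spatially. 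What your route buys is that injectivity of $\mu$ reduces to injectivity of the canonical embedding $\iota:B_G\otimes A\to\ell^{\infty}(G,A)$; what it costs is that this reduction is precisely the gap you flag at the end, and it is exactly the content of the paper's representation-theoretic argument. To close it without repeating that argument, note that for each $x\in G$ the evaluation $\epsilon_x$ is a character of $B_G$ (as $1_x(x)=1$), the family $\{\epsilon_x\}_{x\in G}$ separates points of $B_G\subset\ell^{\infty}(G)$, and $\iota=\prod_{x\in G}(\epsilon_x\otimes\id_A)$; since $B_G$ is commutative, hence nuclear, $B_G\otimes A=B_G\otimes_{\min}A$, on which slice maps by a separating family of functionals detect zero, so $\iota$ is injective. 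With that sentence added, your proof is complete.
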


\begin{proof}
Firstly, since $\ell^{\infty}(G,A)$ sits in $\ell^{\infty}(G,\M(A))$ as an essential ideal, $\ell^{\infty}(G,\M(A))$ is embedded in $\M(\ell^{\infty}(G,A))$ as a  $C^*$-subalgebra. Now, define the maps
$$\varphi:B_{G}\rightarrow \M(\ell^{\infty}(G,A))\ \ \textrm{and}\ \ \psi:A\rightarrow \M(\ell^{\infty}(G,A))$$
by $$\varphi(f)(s)=f(s)1_{\M(A)}\ \ \textrm{and}\ \ \psi(a)(s)=\alpha_{s}(a)$$ for every $f\in B_{G}$, $a\in A$, and $s\in G$. One can see that $\varphi$ and $\psi$ are $*$-homomorphisms  with commuting ranges, which means that
$\varphi(f)\psi(a)=\psi(a)\varphi(f)$ for all $f\in B_{G}$ and $a\in A$. Therefore, there exists a homomorphism $\mu:=\varphi\otimes\psi:B_{G}\otimes A\rightarrow \M(\ell^{\infty}(G,A))$ such that
$\mu(f\otimes a)=\varphi(f)\psi(a)=\psi(a)\varphi(f)$ for every $f\in B_{G}$ and $a\in A$. We prove that $\mu$ is actually an isomorphism of $B_{G}\otimes A$ onto the algebra $\B$. To do so, we first show that $\mu(B_{G}\otimes A)=\B$.
For each spanning element $1_{s}\otimes a$ of $B_{G}\otimes A$, we have
\[
\mu(1_{s}\otimes a)(t)=(\varphi(1_{s})\psi(a))(t)=\varphi(1_{s})(t)\psi(a)(t)=1_{s}(t)\alpha_{t}(a)=
   \begin{cases}
      \alpha_{t}(a) &\textrm{if}\empty\ \text{$s\leq t$}\\
      0 &\textrm{otherwise},
   \end{cases}
\]
which is equal to
\[
\phi_{s}(\alpha_{s}(a))(t)=
   \begin{cases}
      \alpha_{ts^{-1}}(\alpha_{s}(a))=\alpha_{t}(a) &\textrm{if}\empty\ \text{$s\leq t$}\\
      0 &\textrm{otherwise},
   \end{cases}
\]
for every $t\in G$. So, we have $\mu(1_{s}\otimes a)=\phi_{s}(\alpha_{s}(a))\in\B$, and therefore $\mu(B_{G}\otimes A)\subset\B$. To see the other inclusion, for any spanning element $\phi_{s}(a)$ of $\B$, we apply the equation
$\mu(1_{s}\otimes a)=\phi_{s}(\alpha_{s}(a))$ to see that
$$\mu(1_{s}\otimes \alpha_{s^{-1}}(a))=\phi_{s}(\alpha_{s}(\alpha_{s^{-1}}(a)))=\phi_{s}(a).$$
Therefore, $\phi_{s}(a)=\mu(1_{s}\otimes \alpha_{s^{-1}}(a))\in\mu(B_{G}\otimes A)$, which implies that $\B\subset\mu(B_{G}\otimes A)$.

Next, we show that $\mu$ is injective. Define the map $M:B_{G}\rightarrow B(\ell^{2}(G))$ by $(M(f)\lambda)(s)=f(s)\lambda(s)$ for every $f\in B_{G}$ and $\lambda\in\ell^{2}(G)$, which is a faithful (nondegenerate) representation. Let
$\pi:A\rightarrow B(H)$ be a faithful (nondegenerate) representation of $A$ on some Hilbert space $H$. Then, it follows by \cite[Corollary B.11]{RW} that there is a faithful representation $M\otimes\pi:B_{G}\otimes A\rightarrow B(\ell^{2}(G)\otimes H)$ such that
$M\otimes\pi(f\otimes a)=M(f)\otimes \pi(a)$. On the other hand, we have a faithful representation $\tilde{\pi}:\B\rightarrow B(\ell^{2}(G,H))$ of
$\B$ on the Hilbert space $\ell^{2}(G,H)$ defined by $(\tilde{\pi}(\xi)\eta)(s)=\pi(\alpha_{s^{-1}}(\xi(s)))\eta(s)$ for every $\xi\in\B$ and $\eta\in\ell^{2}(G,H)$. Now, take $U$ to be the
isomorphism (unitary) of $\ell^{2}(G)\otimes H$ onto $\ell^{2}(G,H)$ which satisfies $U(\lambda\otimes h)(s)=\lambda(s)h$ for all $\lambda\in\ell^{2}(G)$ and $h\in H$. So, we have
\begin{eqnarray*}
\begin{array}{rcl}
\big(\tilde{\pi}(\mu(f\otimes a))U(\lambda\otimes h)\big)(s)&=&\pi(\alpha_{s^{-1}}(\mu(f\otimes a)(s)))U(\lambda\otimes h)(s)\\
&=&\pi(\alpha_{s^{-1}}(f(s)\alpha_{s}(a)))(\lambda(s)h)\\
&=&\pi(f(s)a)(\lambda(s)h)\\
&=&f(s)\lambda(s)\pi(a)h,
\end{array}
\end{eqnarray*}
and
\begin{eqnarray*}
\begin{array}{rcl}
U\big((M\otimes\pi(f\otimes a))(\lambda\otimes h)\big)(s)&=&U\big((M(f)\otimes \pi(a))(\lambda\otimes h)\big)(s)\\
&=&U\big(M(f)\lambda\otimes \pi(a)h)(s)\\
&=&(M(f)\lambda)(s)\pi(a)h=f(s)\lambda(s)\pi(a)h.
\end{array}
\end{eqnarray*}
Therefore, we have $$\tilde{\pi}(\mu(f\otimes a))U(\lambda\otimes h)=U\big((M\otimes\pi(f\otimes a))(\lambda\otimes h)\big),$$ which implies that
$$U^{*}\tilde{\pi}(\mu(\xi))U=(M\otimes\pi)(\xi)$$ for all $\xi\in B_{G}\otimes A$. So, it follows that $\mu$ must be injective. This is due to the facts that $\tilde{\pi}$ and $M\otimes\pi$ are injective, and $U$ is a unitary.
Consequently, $B_{G}\otimes A\simeq\mu(B_{G}\otimes A)=\B$. Moreover, $B_{G,\infty}\otimes A$ is isomorphic to
$\J$ via $\mu$. To see this, take $a\in A$ and $s<t\in G$. Then,
\begin{eqnarray}
\label{eq4}
\begin{array}{rcl}
\mu((1_{s}-1_{t})\otimes a)&=&\mu((1_{s}\otimes a)-(1_{t}\otimes a))\\
&=&\mu(1_{s}\otimes a)-\mu(1_{t}\otimes a)\\
&=&\phi_{s}(\alpha_{s}(a))-\phi_{t}(\alpha_{t}(a))\\
&=&\phi_{s}(\alpha_{s}(a))-\phi_{t}(\alpha_{ts^{-1}}(\alpha_{s}(a)))\in\J.
\end{array}
\end{eqnarray}
Therefore, $\mu(B_{G,\infty}\otimes A)\subset\J$. For the other inclusion, by the above computation in (\ref{eq4}), we have
\begin{eqnarray*}
\begin{array}{rcl}
\mu((1_{s}-1_{t})\otimes \alpha_{s^{-1}}(a))&=&\phi_{s}(\alpha_{s}(\alpha_{s^{-1}}(a)))-\phi_{t}(\alpha_{t}(\alpha_{s^{-1}}(a)))\\
&=&\phi_{s}(a)-\phi_{t}(\alpha_{ts^{-1}}(a)).
\end{array}
\end{eqnarray*}
So, each spanning element $\phi_{s}(a)-\phi_{t}(\alpha_{ts^{-1}}(a))$ of $\J$ is equal to $\mu((1_{s}-1_{t})\otimes \alpha_{s^{-1}}(a))$, which belongs to $\mu(B_{G,\infty}\otimes A)$. Therefore, $\J\subset\mu(B_{G,\infty}\otimes A)$, and hence
$\mu(B_{G,\infty}\otimes A)=\J$. This means that $B_{G,\infty}\otimes A\simeq \J$ via $\mu$.

At last, we show that the isomorphism $\mu$ satisfies $\beta_{t}\circ\mu=\mu\circ(\tau\otimes\alpha^{-1})_{t}$. Therefore, by \cite[Lemma 2.65]{W}, there is an isomorphism
$\Gamma:((B_{G}\otimes A)\rtimes_{\tau\otimes\alpha^{-1}} G,i)\rightarrow (\B\rtimes_{\beta} G,j)$ such that
$$\Gamma(i_{B_{G}\otimes A}(\xi)i_{G}(s))=j_{\B}(\mu(\xi))j_{G}(s)\ \ \textrm{for all}\ \xi\in(B_{G}\otimes A), s\in G.$$
For each spanning element $1_{s}\otimes a$ of $B_{G}\otimes A$, we have
$$\beta_{t}(\mu(1_{s}\otimes a))=\beta_{t}(\phi_{s}(\alpha_{s}(a)))=\phi_{ts}(\alpha_{s}(a)).$$
On the other hand,
\begin{eqnarray*}
\begin{array}{rcl}
\mu((\tau\otimes\alpha^{-1})_{t}(1_{s}\otimes a))&=&\mu(\tau_{t}\otimes\alpha^{-1}_{t}(1_{s}\otimes a))\\
&=&\mu(\tau_{t}(1_{s})\otimes\alpha^{-1}_{t}(a))\\
&=&\mu(1_{ts}\otimes\alpha_{t^{-1}}(a))\\
&=&\phi_{ts}(\alpha_{ts}(\alpha_{t^{-1}}(a)))\ \ [\textrm{by applying}\ \mu(1_{s}\otimes a)=\phi_{s}(\alpha_{s}(a))]\\
&=&\phi_{ts}(\alpha_{s}(a)).\\
\end{array}
\end{eqnarray*}
Thus, $\beta_{t}\circ\mu=\mu\circ(\tau\otimes\alpha^{-1})_{t}$ is valid. Note that, by some routine computation on spanning elements using the equation (\ref{eq5}), it follows that
$$(B_{G,\infty}\otimes A)\rtimes_{\tau\otimes\alpha^{-1}} G\simeq \Gamma\big((B_{G,\infty}\otimes A)\rtimes_{\tau\otimes\alpha^{-1}} G\big)=\J\rtimes_{\beta} G.$$ We skip it here.
\end{proof}

\begin{cor}
\label{full NT-auto}
If $q=\overline{i}_{B_{G}\otimes A}(1_{e}\otimes 1_{\M(A)})\in\M\big((B_{G}\otimes A)\rtimes_{\tau\otimes\alpha^{-1}} G\big)$, then $\overline{\Gamma}(q)=p$. Thus, it follows that
$\T_{\textrm{cov}}(A\times_{\alpha} P)$ and the ideal $\I$ are isomorphic to the full corners $q[(B_{G}\otimes A)\rtimes_{\tau\otimes\alpha^{-1}} G]q$ and $q[(B_{G,\infty}\otimes A)\rtimes_{\tau\otimes\alpha^{-1}} G]q$, respectively.
\end{cor}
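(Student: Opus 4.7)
The plan is to reduce everything to the identification $\mu(1_e\otimes a)=\phi_e(a)$ established inside the proof of Proposition \ref{B-auto}, then transport the full-corner picture of Theorem \ref{main} and Lemma \ref{ker} through the isomorphism $\Gamma$.

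First, I would verify the identity $\overline{\Gamma}(q)=p$. Since $\Gamma$ is an isomorphism, it extends uniquely to a strictly continuous isomorphism $\overline{\Gamma}:\M((B_{G}\otimes A)\rtimes_{\tau\otimes\alpha^{-1}}G)\to\M(\B\rtimes_{\beta}G)$. By (\ref{eq5}), the composition $\Gamma\circ i_{B_{G}\otimes A}=j_{\B}\circ\mu$ holds on $B_{G}\otimes A$; taking strict extensions on both sides yields $\overline{\Gamma}\circ\overline{i}_{B_{G}\otimes A}=\overline{j_{\B}}\circ\overline{\mu}$ on $\M(B_{G}\otimes A)$. Now recall from the proof of Proposition \ref{B-auto} that $\mu(1_{s}\otimes a)=\phi_{s}(\alpha_{s}(a))$; in particular $\mu(1_{e}\otimes a)=\phi_{e}(a)$, so that $\overline{\mu}(1_{e}\otimes 1_{\M(A)})=\overline{\phi}_{e}(1)$ (by evaluating on an approximate identity of $A$ and using strict continuity of $\overline{\mu}$ and $\overline{\phi}_{e}$). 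Therefore
\[
\overline{\Gamma}(q)=\overline{\Gamma}\bigl(\overline{i}_{B_{G}\otimes A}(1_{e}\otimes 1_{\M(A)})\bigr)
=\overline{j_{\B}}\bigl(\overline{\phi}_{e}(1)\bigr)=\overline{j_{\B}\circ\phi_{e}}(1)=p.
\]

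Next I would deduce the corner isomorphisms. Since $\Gamma$ is an isomorphism of $C^{*}$-algebras and $\overline{\Gamma}(q)=p$, it restricts to an isomorphism
\[
\Gamma\bigl(q[(B_{G}\otimes A)\rtimes_{\tau\otimes\alpha^{-1}} G]q\bigr)=p(\B\rtimes_{\beta} G)p.
\]
Combining this with Theorem \ref{main} immediately gives $\T_{\textrm{cov}}(A\times_{\alpha} P)\simeq q[(B_{G}\otimes A)\rtimes_{\tau\otimes\alpha^{-1}} G]q$. For the ideal, Proposition \ref{B-auto} asserts that $\Gamma$ maps $(B_{G,\infty}\otimes A)\rtimes_{\tau\otimes\alpha^{-1}} G$ onto $\J\rtimes_{\beta} G$; restricting further to the $q$-corner and invoking Lemma \ref{ker} gives
\[
\I\simeq p(\J\rtimes_{\beta} G)p\simeq q[(B_{G,\infty}\otimes A)\rtimes_{\tau\otimes\alpha^{-1}} G]q.
\]

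Fullness then comes for free. Because $\Gamma$ is an isomorphism carrying $q$ to $p$ and $(B_{G,\infty}\otimes A)\rtimes_{\tau\otimes\alpha^{-1}} G$ onto $\J\rtimes_{\beta} G$, the density of $(\B\rtimes_{\beta} G)p(\B\rtimes_{\beta} G)$ in $\B\rtimes_{\beta} G$ (proved in Theorem \ref{main}) and the density of $(\J\rtimes_{\beta} G)p(\J\rtimes_{\beta} G)$ in $\J\rtimes_{\beta} G$ (proved in Lemma \ref{ker}) pull back along $\Gamma$ to the corresponding density statements for $q$. The only place where care is needed is the passage from $\mu$ to its strict extension $\overline{\mu}$ and from $\Gamma$ to $\overline{\Gamma}$; this is the single technical point, but it is handled in the standard way using an approximate identity $\{a_{\lambda}\}\subset A$ together with the fact that $\phi_{e}(a_{\lambda})\to\overline{\phi}_{e}(1)$ strictly in $\M(\B)$ (Lemma \ref{mux}) and $j_{\B}$ is nondegenerate, exactly as in the opening paragraph of the proof of Theorem \ref{main}.
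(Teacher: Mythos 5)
Your proposal is correct and follows essentially the same route as the paper: both establish $\overline{\Gamma}(q)=p$ by pushing an approximate identity of $A$ through the identity $\mu(1_{e}\otimes a_{\lambda})=\phi_{e}(a_{\lambda})$ and equation (\ref{eq5}), then transport the full corners of Theorem \ref{main} and Lemma \ref{ker} through the isomorphism $\Gamma$. The paper phrases the strict-limit computation directly on the net rather than via the extension $\overline{\mu}$, but this is the same argument.
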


\begin{proof}
First of all, as the homomorphism $j_{\B}$ is nondegenerate, so is the isomorphism $\Gamma$. Therefore, $\Gamma$ extends to an isometry of multiplier algebras. Now, take any approximate identity $\{a_{\lambda}\}$ in $A$. Then,
it follows by the equation (\ref{eq5}) that
$$\Gamma(i_{B_{G}\otimes A}(1_{e}\otimes a_{\lambda}))=j_{\B}(\mu(1_{e}\otimes a_{\lambda}))=j_{\B}(\phi_{e}(a_{\lambda})).$$
Thus, since $1_{e}\otimes a_{\lambda}\rightarrow 1_{e}\otimes 1_{\M(A)}$ strictly in $\M(B_{G}\otimes A)$, in the equation above, the left hand side tends to $\overline{\Gamma}(\overline{i}_{B_{G}\otimes A}(1_{e}\otimes 1_{\M(A)}))=\overline{\Gamma}(q)$,
while the right hand side tends to $\overline{j_{\B}}(\overline{\phi}_{e}(1))=p$ strictly in $\M(\B\rtimes_{\beta} G)$. Hence, we have $\overline{\Gamma}(q)=p$. Therefore, it follows by Proposition \ref{B-auto} that
$$q[(B_{G}\otimes A)\rtimes_{\tau\otimes\alpha^{-1}} G]q\simeq\Gamma\big(q[(B_{G}\otimes A)\rtimes_{\tau\otimes\alpha^{-1}} G]q\big)=p(\B\rtimes_{\beta} G)p,$$ and
$$q[(B_{G,\infty}\otimes A)\rtimes_{\tau\otimes\alpha^{-1}} G]q\simeq\Gamma\big(q[(B_{G,\infty}\otimes A)\rtimes_{\tau\otimes\alpha^{-1}} G]q\big)=p(\J\rtimes_{\beta} G)p,$$
where by Theorem \ref{main} and Lemma \ref{ker}, $p(\B\rtimes_{\beta} G)p$ and $p(\J\rtimes_{\beta} G)p$ are isomorphic to $\T_{\textrm{cov}}(A\times_{\alpha} P)$ and the ideal $\I$, respectively. Consequently,
$$\T_{\textrm{cov}}(A\times_{\alpha} P)\simeq q[(B_{G}\otimes A)\rtimes_{\tau\otimes\alpha^{-1}} G]q\ \ \textrm{and}
\ \ \I\simeq q[(B_{G,\infty}\otimes A)\rtimes_{\tau\otimes\alpha^{-1}} G]q$$ via the isomorphism $\Gamma^{-1}\circ \Psi$.
\end{proof}

Of course it is natural to ask that whether there is a familiar identification for the algebra $\T_{\textrm{cov}}(A\times_{\alpha} P)$ and its ideal $\I$ for the automorphic system
$(A,P,\alpha)$ with the trivial action $\alpha=\id$. To answer this question, we first need to recall
about the Toeplitz algebra $\T(P)$ briefly. For more, readers are referred to \cite{Nica} and \cite{LacaR}. Let $\{e_{x}: x\in P\}$ be the usual
orthonormal basis of the Hilbert space $\ell^{2}(P)$. There is a representation $T:P\rightarrow B(\ell^{2}(P))$ (called the \emph{Toeplitz representation}) of $P$ on $\ell^{2}(P)$ by isometries
such that $T_{x}(e_{y})=e_{xy}$, and
\begin{align}
\label{Nica-cov}
T_{x}^{*}T_{y}=T_{(x\vee y)x^{-1}}T_{(x\vee y)y^{-1}}^{*}
\end{align}
for all $x,y\in P$. Now, the Toeplitz algebra $\T(P)$ is the $C^{*}$-subalgebra of $B(\ell^{2}(P))$ generated by the isometries
$\{T_{x}: x\in P\}$. Moreover, since $(G,P)$ is abelian, $\T(P)$ is (isomorphic to) the isometric crossed product $B_{P}\times_{\tau}^{\iso} P$, where the action $\tau$ of $P$ on the algebra
$B_{P}=\clsp\{1_{y}: y\in P\}\subset \ell^{\infty}(P)$ is given by $\tau_{x}(1_{y})=1_{xy}$ for all $x,y\in P$. Thus, $\T(P)\simeq B_{P}\times_{\tau}^{\iso} P$ is indeed the universal $C^*$-algebra for isometric
representations $V:P\rightarrow B(H)$ of $P$ which are \emph{Nica covariant}, which means that they satisfy
\begin{align}
\label{Nica-cov2}
V_{x}^{*}V_{y}=V_{(x\vee y)x^{-1}}V_{(x\vee y)y^{-1}}^{*}
\end{align}
for all $x,y\in P$.

\begin{remark}
\label{(C-P-id)}
One can see that for the trivial system $(\C,P,\id)$ the corresponding dynamical system $(\B,G,\beta)$
(in Theorem \ref{main}) is nothing but the system $(B_{G},G,\tau)$. Therefore, by Theorem \ref{main}, the Nica-Toeplitz algebra
\begin{align}
\label{span-NT-id}
\T_{\textrm{cov}}(\C\times_{\id} P)=\overline{\newspan}\{i_{P}(x)^{*} i_{P}(y) : x,y \in P\}
\end{align}
of the system $(\C,P,\id)$ is isomorphic to the full corner $k_{B_{G}}(1_{e})(B_{G}\rtimes_{\tau} G)k_{B_{G}}(1_{e})$
of the crossed product $(B_{G}\rtimes_{\tau} G,k)$, such that for the isomorphism
$\Psi:\T_{\textrm{cov}}(\C\times_{\id} P)\rightarrow k_{B_{G}}(1_{e})(B_{G}\rtimes_{\tau} G)k_{B_{G}}(1_{e})$ we have $\Psi(i_{P}(x))=k_{B_{G}}(1_{e})k_{G}(x)^{*}k_{B_{G}}(1_{e})$ for all
$x\in P$. Note that here the projection $k_{B_{G}}(1_{e})$ in $B_{G}\rtimes_{\tau} G\subset \M(B_{G,\infty}\rtimes_{\tau} G)$
is the projection $p$. Moreover, the (essential) ideal $\J$ of $B_{G}$ is
the (essential) ideal $B_{G,\infty}$, and hence, the ideal
\begin{align}
\label{span-I-id}
\I=\clsp\{i_{P}(x)^{*}(1-i_{P}(s)^{*}i_{P}(s))i_{P}(y): x,y,s\in P\}
\end{align}
of the algebra $\T_{\textrm{cov}}(\C\times_{\id} P)$ is isomorphic to the full corner
$k_{B_{G}}(1_{e})(B_{G,\infty}\rtimes_{\tau} G)k_{B_{G}}(1_{e})$ of $B_{G,\infty}\rtimes_{\tau} G$
via the isomorphism $\Psi$ (see Lemma \ref{ker}), such that
\begin{align}
\label{eq21}
\Psi(i_{P}(x)^{*}(1-i_{P}(s)^{*}i_{P}(s))i_{P}(y))=k_{B_{G}}(1_{e})[k_{G}(x)k_{B_{G}}(1_{e}-1_{s})j_{G}(y)^{*}]k_{B_{G}}(1_{e}).
\end{align}
Now, in \cite{Li-2}, by using the dilation theory in \cite{Marcelo} for isometric crossed products, it was shown that
the Toeplitz algebra $\T(P)$ is isomorphic to the full corner $k_{B_{G}}(1_{e})(B_{G}\rtimes_{\tau} G)k_{B_{G}}(1_{e})$.
Therefore, it follows that the algebra $\T_{\textrm{cov}}(\C\times_{\id} P)$ is isomorphic to the Toeplitz algebra $\T(P)$.
Note that, this can be also seen directly by \cite[Corollary 7.11]{BLS}, and for the isomorphism
of $\T_{\textrm{cov}}(\C\times_{\id} P)$ onto $\T(P)$, which we denote it by $\Phi$, we have
$\Phi(i_{P}(x))=T_{x}^{*}$ for all $x\in P$.
\end{remark}

\begin{lemma}
\label{comm.I}
The ideal $\I\simeq k_{B_{G}}(1_{e})(B_{G,\infty}\rtimes_{\tau} G)k_{B_{G}}(1_{e})$ of $\T_{\textrm{cov}}(\C\times_{\id} P)$ is isomorphic the commutator ideal $\mathcal{C}_{P}$ of $\T(P)$, such that
we have
\begin{align}
\label{comm.I.span}
\mathcal{C}_{P}=\clsp\{T_{x}(1-T_{s}T_{s}^{*})T_{y}^{*}: x,y,s\in P\}.
\end{align}
Thus, (\ref{comm.I.span}) generalizes \cite[Lemma 2.4]{LR} to every lattice-ordered abelian group $(G,P)$.
\end{lemma}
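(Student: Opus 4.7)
The approach is to transport the ideal $\I$ across the isomorphism $\Phi:\T_{\textrm{cov}}(\C\times_{\id} P)\to\T(P)$ of Remark \ref{(C-P-id)} and identify the image as $\mathcal{C}_P$; the spanning formula (\ref{comm.I.span}) will then follow immediately. Since $\Phi(i_P(x))=T_x^*$, applying $\Phi$ to a typical spanning element of $\I$ gives
$$\Phi\bigl(i_P(x)^*(1-i_P(s)^*i_P(s))i_P(y)\bigr)=T_x(1-T_sT_s^*)T_y^*,$$
so $\Phi(\I)=\clsp\{T_x(1-T_sT_s^*)T_y^*:x,y,s\in P\}$, and the lemma reduces to proving $\Phi(\I)=\mathcal{C}_P$.

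The inclusion $\Phi(\I)\subseteq\mathcal{C}_P$ is immediate: $\T(P)$ is unital (with unit $T_e$), so in the unital commutative quotient $\T(P)/\mathcal{C}_P$ each image of the isometry $T_s$ is normal and therefore unitary. Hence $1-T_sT_s^*$ vanishes in the quotient, and every $T_x(1-T_sT_s^*)T_y^*$ lies in $\mathcal{C}_P$.

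For the reverse inclusion $\mathcal{C}_P\subseteq\Phi(\I)$, I would exhibit a commutative quotient of $\T(P)$ whose kernel is exactly $\Phi(\I)$. The characteristic functions $1_s$ all coincide modulo $B_{G,\infty}$, giving $B_G/B_{G,\infty}\cong\C$ with the induced $G$-action necessarily trivial; amenability of the discrete abelian group $G$ then produces the $G$-equivariant short exact sequence
$$0\to B_{G,\infty}\rtimes_{\tau}G\to B_G\rtimes_{\tau}G\to C^*(G)\to 0.$$
Under the quotient map $p=k_{B_G}(1_e)$ is sent to $\delta_e=1\in C^*(G)$ (since $1_e\mapsto 1$), so compressing by $p$ preserves the right-hand term. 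Combined with the corner identifications in Remark \ref{(C-P-id)} — namely $p(B_G\rtimes_{\tau}G)p\cong\T(P)$ and $p(B_{G,\infty}\rtimes_{\tau}G)p\cong\Phi(\I)$ — this yields
$$0\to\Phi(\I)\to\T(P)\to C^*(G)\to 0.$$
Since $G$ is abelian, $C^*(G)$ is commutative, giving $\mathcal{C}_P\subseteq\Phi(\I)$.

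The main technical point is the compression step: one has to verify that the image of $p$ in $C^*(G)$ is the unit, which is immediate from $1_e\mapsto 1$ under $B_G\to\C$, so that $p\cdot C^*(G)\cdot p=C^*(G)$ and no information is lost on passing to the corner. Combining the two inclusions gives $\Phi(\I)=\mathcal{C}_P$, and (\ref{comm.I.span}) follows.
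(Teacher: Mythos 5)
Your proof is correct, and its skeleton --- transport $\I$ to $\Phi(\I)=\clsp\{T_{x}(1-T_{s}T_{s}^{*})T_{y}^{*}: x,y,s\in P\}$, obtain $\Phi(\I)\subseteq\mathcal{C}_{P}$ because the generators die in the commutative quotient, and obtain $\mathcal{C}_{P}\subseteq\Phi(\I)$ by exhibiting $\T(P)/\Phi(\I)$ as commutative --- is exactly the paper's. The difference is in how the commutativity of $\T(P)/\Phi(\I)$ is established. The paper gets it in one line from results already in hand: by Proposition \ref{ess ideal of NT} and the exact sequence (\ref{exseq1}), $\T(P)/\Phi(\I)\simeq \T_{\textrm{cov}}(\C\times_{\id}P)/\I\simeq \C\times_{\id}^{\iso}P\simeq C^{*}(G)\simeq C(\widehat{G})$, which is abelian. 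You instead re-derive the quotient inside the crossed-product picture: you use the $G$-equivariant extension $0\to B_{G,\infty}\rtimes_{\tau}G\to B_{G}\rtimes_{\tau}G\to C^{*}(G)\to 0$ and compress by $p=k_{B_{G}}(1_{e})$, checking that $p$ maps to the unit of $C^{*}(G)$. This is valid --- the compressed sequence is exact since $pAp\cap I=pIp$ for any ideal $I$ and projection $p\in\M(A)$, and full and reduced crossed products agree by amenability --- and it is self-contained at the level of the corner realization; its cost is that it silently uses the compatibility of the corner isomorphism $p(B_{G}\rtimes_{\tau}G)p\simeq\T(P)$ of Remark \ref{(C-P-id)} with $\Phi$, i.e.\ that this isomorphism carries $p(B_{G,\infty}\rtimes_{\tau}G)p$ precisely onto $\Phi(\I)$, which deserves one explicit sentence. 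Finally, for the easy inclusion the paper's observation that $1-T_{s}T_{s}^{*}=T_{s}^{*}T_{s}-T_{s}T_{s}^{*}=[T_{s}^{*},T_{s}]$ is itself a commutator is marginally more direct than your normal-isometry-in-the-quotient argument, though both are fine.
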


\begin{proof}
By a simple calculation on the spanning elements of $\I$ (see (\ref{span-I-id})), one can see that it is isomorphic to the ideal
$$\widetilde{\I}=\clsp\{T_{x}(1-T_{s}T_{s}^{*})T_{y}^{*}: x,y,s\in P\}$$ of $\T(P)$ via the isomorphism $\Phi$. Then, since (see (\ref{exseq1}))
$$\frac{\T(P)}{\widetilde{\I}}\simeq \frac{\T_{\textrm{cov}}(\C\times_{\id} P)}{\I}\simeq \C\times_{\id}^{\iso} P\simeq \C\rtimes_{\id} G\simeq C^{*}(G)\simeq C(\widehat{G}),$$
in which $C(\widehat{G})$ is abelian, it follows that the commutator ideal $\mathcal{C}_{P}$ of $\T(P)$ must be contained in ${\widetilde{\I}}$. For other inclusion, as each
element $1-T_{s}T_{s}^{*}$ is actually the commutator $[T_{s}^{*},T_{s}]\in \mathcal{C}_{P}$, we have ${\widetilde{\I}}\subset \mathcal{C}_{P}$. Therefore,
${\widetilde{\I}}=\mathcal{C}_{P}$, and hence,
$$\I\simeq \Phi(\I)=\mathcal{C}_{P}=\clsp\{T_{x}(1-T_{s}T_{s}^{*})T_{y}^{*}: x,y,s\in P\}.$$
\end{proof}

\begin{remark}
\label{Murphy-exseq}
Note that, therefore, for the trivial system $(\C,P,\id)$, the short exact sequence (\ref{exseq1}) is the well-known exact sequence
\begin{align}
\label{Murphy-exseq2}
0 \longrightarrow \mathcal{C}_{P} \stackrel{}{\longrightarrow} \T(P) \stackrel{\psi}{\longrightarrow} C(\widehat{G}) \longrightarrow 0,
\end{align}
where $\psi$ is the surjective homomorphism which maps each $T_{x}$ to the evaluation map $\varepsilon_{x}$ (see \cite[Theorem 1.5]{murphy}).
Recall that the algebra $C(\widehat{G})$ is generated by the evaluation maps $\{\varepsilon_{x}: x\in P\}$ of
$\widehat{G}$ into $\TT\subset \C$ defined by $\varepsilon_{x}(\gamma)=\gamma(x)$ for all $\gamma\in \widehat{G}$.
\end{remark}

\begin{cor}
\label{NT-and-I-id}
Consider the system $(A,P,\alpha)$ with the trivial action $\alpha=\id$. Then, the Nica-Toeplitz algebra $\T_{\textrm{cov}}(A\times_{\alpha} P)$ and the ideal $\I$ corresponding to the system
are isomorphic to the tensor products $A\otimes_{\textrm{max}} \T(P)$ and $A\otimes_{\textrm{max}} \mathcal{C}_{P}$, respectively. Consequently, the short exact sequence (\ref{exseq1})
of the system is actually the exact sequence
\begin{align}
\label{tensor-exseq}
0 \longrightarrow A\otimes_{\textrm{max}} \mathcal{C}_{P} \stackrel{}{\longrightarrow} A\otimes_{\textrm{max}} \T(P) \stackrel{}{\longrightarrow} A\otimes_{\textrm{max}} C(\widehat{G}) \longrightarrow 0
\end{align}
obtained by the (maximal) tensor product with the $C^{*}$-algebra $A$ to (\ref{Murphy-exseq2}).
\end{cor}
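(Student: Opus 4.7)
The plan is to exploit the trivial action to set up a bijection between Nica--Toeplitz covariant representations of $(A,P,\mathrm{id})$ and pairs of commuting nondegenerate representations of $A$ and of $\T(P)$, and then invoke the universal property of the maximal tensor product.

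Given a Nica--Toeplitz covariant pair $(\pi,V)$ of $(A,P,\mathrm{id})$ on $H$, equation (\ref{cov2}) with $\alpha=\mathrm{id}$ yields $V_{x}\pi(a)=\pi(a)V_{x}$ and $V_{x}V_{x}^{*}=\overline{\pi}(1)=1$, so each $W_{x}:=V_{x}^{*}$ is an isometry. The abelianness of $G$ turns $V_{x}V_{y}=V_{xy}$ into $W_{x}W_{y}=W_{xy}$, while the Nica covariance (\ref{cov5}) becomes $W_{x}W_{x}^{*}W_{y}W_{y}^{*}=W_{x\vee y}W_{x\vee y}^{*}$, which a standard manipulation shows is equivalent to (\ref{Nica-cov2}). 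Thus $W$ is a Nica-covariant isometric representation of $P$ whose range commutes with $\pi(A)$, and the construction is reversible: any pair consisting of a nondegenerate $\pi:A\to B(H)$ and a Nica-covariant isometric $W:P\to B(H)$ with commuting ranges produces a Nica--Toeplitz covariant pair of $(A,P,\mathrm{id})$ by $V_{x}:=W_{x}^{*}$.

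By the universal property of $\T(P)$ as the universal $C^{*}$-algebra generated by a Nica-covariant isometric representation of $P$, these commuting pairs $(\pi,W)$ correspond to pairs of commuting nondegenerate representations of $A$ and of $\T(P)$, which by the universal property of $\otimes_{\max}$ correspond in turn to nondegenerate representations of $A\otimes_{\max}\T(P)$. Coupling this with the universal property of the Nica--Toeplitz algebra yields a canonical isomorphism
$$\Theta:\T_{\textrm{cov}}(A\times_{\mathrm{id}}P)\longrightarrow A\otimes_{\max}\T(P)$$
with $\Theta(i_{A}(a))=a\otimes 1$ and $\Theta(i_{P}(x))=1\otimes T_{x}^{*}$. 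Applying $\Theta$ to the spanning set of Proposition \ref{ess ideal of NT} gives
$$\Theta\big(i_{P}(x)^{*}i_{A}(a)(1-i_{P}(s)^{*}i_{P}(s))i_{P}(y)\big)=a\otimes T_{x}(1-T_{s}T_{s}^{*})T_{y}^{*},$$
and by Lemma \ref{comm.I} the closed span of the elements $T_{x}(1-T_{s}T_{s}^{*})T_{y}^{*}$ is the commutator ideal $\mathcal{C}_{P}$. Hence $\Theta(\I)$ is the canonical image of $A\otimes_{\max}\mathcal{C}_{P}$ inside $A\otimes_{\max}\T(P)$.

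For the quotient, any isometric covariant representation of $(A,P,\mathrm{id})$ has each $V_{x}$ unitary (from $V_{x}V_{x}^{*}=1$) and commuting with $\pi(a)$, so it extends uniquely to a unitary representation of the group $G$ commuting with $\pi$; this gives $A\times_{\mathrm{id}}^{\iso}P\simeq A\rtimes_{\mathrm{id}}G\simeq A\otimes C^{*}(G)\simeq A\otimes C(\widehat{G})$ via the Fourier transform. Transporting the short exact sequence (\ref{exseq1}) through $\Theta$ then produces (\ref{tensor-exseq}). The main technicality I anticipate is the identification of $\Theta(\I)$ with the abstract $C^{*}$-algebra $A\otimes_{\max}\mathcal{C}_{P}$ rather than just with a concrete closed ideal: this amounts to injectivity of the natural map $A\otimes_{\max}\mathcal{C}_{P}\to A\otimes_{\max}\T(P)$, i.e.\ to exactness of $A\otimes_{\max}(-)$ on the Murphy sequence (\ref{Murphy-exseq2}), which holds because the quotient $C(\widehat{G})$ is commutative and hence nuclear; alternatively it is forced abstractly by comparing the two short exact sequences, whose middle terms and quotients are already identified.
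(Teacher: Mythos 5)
Your proposal is correct, but it takes a genuinely different route from the paper. The paper deduces the corollary from its structural machinery: it specializes Corollary \ref{full NT-auto} to $\alpha=\id$, uses \cite[Lemma 2.75]{W} to convert $(B_{G}\otimes A)\rtimes_{\tau\otimes\id}G$ into $(B_{G}\rtimes_{\tau}G)\otimes_{\max}A$, identifies the corner $k_{B_{G}}(1_{e})(B_{G}\rtimes_{\tau}G)k_{B_{G}}(1_{e})$ with $\T(P)$ as in Remark \ref{(C-P-id)}, and invokes nuclearity of $B_{G}\rtimes_{\tau}G$ and of $\T(P)$ to pass between minimal and maximal tensor products. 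You instead argue directly from universal properties: the observation that for the trivial action a Nica--Toeplitz covariant pair $(\pi,V)$ is exactly a nondegenerate representation of $A$ commuting with the Nica-covariant isometric representation $W_{x}=V_{x}^{*}$ (your verification that the range-projection form of Nica covariance is equivalent to (\ref{Nica-cov2}) for isometries is the standard one and is correct), combined with the universal properties of $\T(P)$ and of $\otimes_{\max}$, produces $\Theta$ with $\Theta(i_{A}(a))=a\otimes 1$ and $\Theta(i_{P}(x))=1\otimes T_{x}^{*}$; one then gets the inverse from the commuting pair $(i_{A},\,T_{x}\mapsto i_{P}(x)^{*})$ in $\M(\T_{\textrm{cov}}(A\times_{\id}P))$, so the isomorphism claim is legitimate even though you state it briskly. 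Your approach buys a shorter, self-contained proof that works with $\otimes_{\max}$ from the start and needs no nuclearity input and none of Theorem \ref{main} or Proposition \ref{B-auto}; the paper's approach keeps the result inside its full-corner framework and makes the compatibility with the isomorphism $\Psi$ explicit. One small correction: the injectivity of $A\otimes_{\max}\mathcal{C}_{P}\to A\otimes_{\max}\T(P)$, which you rightly flag, does not require nuclearity of $C(\widehat{G})$ --- the maximal tensor product always preserves short exact sequences (nuclearity of the quotient is the relevant hypothesis only for the minimal tensor product); your alternative justification by comparing the two exact sequences is also fine.
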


\begin{proof}
Firstly, it is known that the crossed products of nuclear $C^{*}$-algebras by actions of amenable (locally compact) groups are nuclear. Therefore, since the algebra $B_{G}$ and the group $G$ are abelian,
the crossed product $B_{G}\rtimes_{\tau} G$ is nuclear. It thus follows that
$$(B_{G}\rtimes_{\tau} G)\otimes_{\textrm{max}} A=(B_{G}\rtimes_{\tau} G)\otimes_{\textrm{min}} A=(B_{G}\rtimes_{\tau} G)\otimes A.$$
Then, by \cite[Lemma 2.75]{W}, there is an isomorphism
$$\Delta: ((B_{G}\otimes A)\rtimes_{\tau\otimes\id} G,i)\rightarrow (B_{G}\rtimes_{\tau} G,k)\otimes_{\textrm{max}} A$$ such that
$$\Delta(i_{B_{G}\otimes A}(1_{t}\otimes a)i_{G}(s))=k_{B_{G}}(1_{t})k_{G}(s)\otimes a.$$ Since the homomorphism $k_{B_{G}}$ is nondegenerate, so is the isomorphism $\Delta$, and hence, it extends to
an isometry of multiplier algebras. Now, for any approximate identity $\{a_{\lambda}\}$ in $A$, we have
$$\Delta(i_{B_{G}\otimes A}(1_{e}\otimes a_{\lambda}))=k_{B_{G}}(1_{e})\otimes a_{\lambda}.$$ One can see that, in the equation above, the left hand side approaches
$\overline{\Delta}(\overline{i}_{B_{G}\otimes A}(1_{e}\otimes 1_{\M(A)}))=\overline{\Delta}(q)$, while the right hand side approaches $k_{B_{G}}(1_{e})\otimes 1_{\M(A)}$ strictly in
the multiplier algebra $\M((B_{G}\rtimes_{\tau} G,k)\otimes_{\textrm{max}} A)$. So, we must have $$\overline{\Delta}(q)=k_{B_{G}}(1_{e})\otimes 1_{\M(A)},$$ which is a projection in
$\M((B_{G}\rtimes_{\tau} G,k)\otimes_{\textrm{max}} A)$ and we denote it by $\tilde{q}$. We therefore have
\begin{eqnarray*}
\begin{array}{rcl}
\T_{\textrm{cov}}(A\times_{\id} P)&\simeq& q[(B_{G}\otimes A)\rtimes_{\tau\otimes\id} G]q\\
&\simeq&\Delta\big(q[(B_{G}\otimes A)\rtimes_{\tau\otimes\id} G]q\big)\\
&=&\tilde{q}[(B_{G}\rtimes_{\tau} G)\otimes_{\textrm{max}} A]\tilde{q}\\
&=&\tilde{q}[(B_{G}\rtimes_{\tau} G)\otimes_{\textrm{min}} A]\tilde{q}\\
&=&[k_{B_{G}}(1_{e})(B_{G}\rtimes_{\tau} G)k_{B_{G}}(1_{e})]\otimes_{\textrm{min}} A\\
&\simeq&\T_{\textrm{cov}}(\C \times_{\id} P)\otimes_{\textrm{min}} A \simeq \T(P)\otimes_{\textrm{min}} A,
\end{array}
\end{eqnarray*}
where $\T(P)\otimes_{\textrm{min}} A=\T(P)\otimes_{\textrm{max}} A=\T(P)\otimes A$. This is due to the fact that, since $G$ is abelian, by \cite[Corollary 6.45]{Li}, the algebra $\T(P)$
is indeed nuclear. So, more precisely, we have an isomorphism
$\Upsilon:(\T_{\textrm{cov}}(A\times_{\id} P),i_{A},i_{P})\rightarrow A\otimes_{\textrm{max}} \T(P)$ such that
$$\Upsilon(i_{P}(x)^{*}i_{A}(a)i_{P}(y))=a\otimes T_{x}T_{y}^{*}$$ for all $a\in A$ and $x,y\in P$ (one can see that when $A=\C$ the isomorphism $\Upsilon$ is just the isomorphism $\Phi$ in Remark \ref{(C-P-id)}).
In particular, $\Upsilon(i_{A}(a))=a\otimes 1_{\T(P)}$, and since $\Upsilon$ is nondegenerate (it is not difficult to see this as $A$ contains an approximate identity), we have
$\overline{\Upsilon}(i_{P}(x))=1_{\M(A)}\otimes T_{x}^{*}$. Moreover, $\Upsilon$ restricts to an isomorphism of the ideal $\I$ onto the ideal $A\otimes_{\textrm{max}} \mathcal{C}_{P}$ of
$A\otimes_{\textrm{max}} \T(P)$, as
\begin{eqnarray*}
\begin{array}{l}
\Upsilon(i_{P}(x)^{*}i_{A}(a)(1-i_{P}(s)^{*}i_{P}(s))i_{P}(y))\\
=\overline{\Upsilon}(i_{P}(x)^{*})\Upsilon(i_{A}(a))[\overline{\Upsilon}(1)-\overline{\Upsilon}(i_{P}(s)^{*})\overline{\Upsilon}(i_{P}(s))]\overline{\Upsilon}(i_{P}(y))\\
(1\otimes T_{x})(a\otimes 1)[1-(1\otimes T_{s})(1\otimes T_{s}^{*})](1\otimes T_{y}^{*})\\
(a\otimes T_{x})[1-(1\otimes T_{s}T_{s}^{*})](1\otimes T_{y}^{*})\\
(a\otimes T_{x})(1\otimes T_{y}^{*})-(a\otimes T_{x})(1\otimes T_{s}T_{s}^{*})(1\otimes T_{y}^{*})\\
(a\otimes T_{x}T_{y}^{*})-(a\otimes T_{x}T_{s}T_{s}^{*}T_{y}^{*})\\
a\otimes (T_{x}T_{y}^{*}-T_{x}T_{s}T_{s}^{*}T_{y}^{*})=a\otimes T_{x}(1-T_{s}T_{s}^{*})T_{y}^{*},
\end{array}
\end{eqnarray*}
which is a spanning element of $A\otimes_{\textrm{max}} \mathcal{C}_{P}$. Note that $\mathcal{C}_{P}$ is also nuclear as it is an ideal of the nuclear algebra $\T(P)$, and hence,
$A\otimes_{\textrm{max}} \mathcal{C}_{P}=A\otimes_{\textrm{min}} \mathcal{C}_{P}=A\otimes \mathcal{C}_{P}$. Consequently, for the system $(A,P,\id)$, we have the following
commutative diagram:
\begin{equation*}
\begin{diagram}\dgARROWLENGTH=0.4\dgARROWLENGTH
\node{0} \arrow{e}\node{\I} \arrow{s,l}{\Upsilon}\arrow{e}
\arrow{s}\arrow{e}\node{(\T_{\textrm{cov}}(A\times_{\id} P),i)}\arrow{s,l}{\Upsilon}\arrow{e,t}{\Omega}
\node{(A\times_{\id}^{\iso} P,\mu)}\arrow{s,l}{h}\arrow{e} \node{0}\\
\node{0} \arrow{e} \node{A\otimes \mathcal{C}_{P}} \arrow{e} \node {A\otimes \T(P)} \arrow{e,t}{\id\otimes \psi} \node {A\otimes C(\widehat{G})}
\arrow{e} \node{0,}
\end{diagram}
\end{equation*}
where $h$ is an isomorphism obtained by the composition of the isomorphisms
$$A\times_{\id}^{\iso} P\simeq A\rtimes_{\id} G\simeq A\otimes_{\max} C^{*}(G)\simeq A\otimes C(\widehat{G})$$ such that
$h(\mu_{A}(a)\mu_{P}(x))=a\otimes \varepsilon_{x^{-1}}$ for all $a\in A$ and $x\in P$.
\end{proof}

\subsection*{Acknowledgements}
The author would like to thank the anonymous reviewer for valuable suggestions and many helpful comments on earlier versions of the paper.

\end{document}